\newcommand{\noun}[1]{\textsc{#1}}
 \theoremstyle{plain}
\newtheorem{thm}{Theorem}[section]
  \theoremstyle{plain}
  \newtheorem{lem}[thm]{Lemma}
  \theoremstyle{plain}
  \newtheorem{prop}[thm]{Proposition}
  \theoremstyle{remark}
  \newtheorem{rem}[thm]{Remark}
  \theoremstyle{remark}
  \newtheorem*{rem*}{Remark}
\subjclass[2000]{35K99}
\begin{document}

\title{On a question of Landis and Oleinik}

\author{Tu Nguyen}

\begin{abstract}
Let $P=\partial_{t}+\partial_{i}(a^{ij}\partial_{j})$ be a backward
parabolic operator. It is shown that under certain conditions on $\left\{ a^{ij}\right\} $,
if $u$ satisfies $\left|Pu\right|\leq C(\left|u\right|+\left|\nabla u\right|)$,
$\left|u(x,t)\right|\lesssim e^{C\left|x\right|^{2}}$ in $\mathbb{R}^{n}\times[0,T]$
and $\left|u(x,0)\right|\lesssim e^{-M\left|x\right|^{2}}$ for all
$M>0$, then $u$ vanishes identically in $\mathbb{R}^{n}\times[0,T]$.
\end{abstract}
\maketitle

\section{Introduction}

Let $P$ be a backward parabolic operator on $\mathbb{R}^{n}$,\[
Pu=\partial_{t}u+\mbox{div}(A\nabla u)\]
where $A(x,t)=(a^{ij}(x,t))_{i,j=1}^{n}$ is a real, symmetric matrix
such that for some $\lambda>0$, \begin{equation}
\lambda\left|\xi\right|^{2}\leq a^{ij}(x,t)\xi_{i}\xi_{j}\leq\lambda^{-1}\left|\xi\right|^{2}\mbox{ for all }\xi\in\mathbb{R}^{n}.\label{eq:ellipticity}\end{equation}
It was conjectured by Landis and Oleinik \cite{MR0402268} that if
$Pu=b(x,t)\cdot\nabla u+a(x,t)u$ in $\mathbb{R}^{n}\times[0,T]$
and $\left|u(x,0)\right|\lesssim e^{-\left|x\right|^{2+\epsilon}},\,\forall x\in\mathbb{R}^{n}$
then $u\equiv0$ in $\mathbb{R}^{n}\times[0,T]$, provided $A,b$
and $c$ satisfy appropriate conditions at infinity.

Escauriaza, Kenig, Ponce and Vega \cite{MR2231129} showed that this
is true when $P$ is the backward heat operator (i.e. $A(x,t)\equiv\mbox{Id}$)
and $b$ and $c$ are bounded. They also obtained a similar result
when the domain is $\mathbb{R}_{+}^{n}\times[0,T]$. The aim of this
paper is to extend these results to parabolic operators with variable
coefficients. 

\begin{thm}
Suppose that $\{a^{ij}\}$ satisfy the ellipticity condition (\ref{eq:ellipticity}),
and for some $\epsilon>0$\begin{eqnarray*}
 &  & \left|\nabla_{x}a^{ij}(x,t)\right|\lesssim\left\langle x\right\rangle ^{-1-\epsilon},\,\,\,\left|\partial_{t}a^{ij}(x,t)\right|\lesssim1,\\
 &  & \left|a^{ij}(x,t)-a^{ij}(x,s)\right|\lesssim\left\langle x\right\rangle ^{-1}\left|t-s\right|^{1/2},\,\,\,\,\,\,\,\,\,\,\,\,\,\,\,\forall x\in\mathbb{R}^{n};\,\,\, t,s\in[0,T].\end{eqnarray*}
Assume that $u$ satisfies the inequalities \[
\left|Pu\right|\leq C(\left|u\right|+\left|\nabla u\right|)\,\,\,\,\,\mbox{ in }\mathbb{R}^{n}\times[0,T]\]
and \[
\left|u(x,t)\right|\lesssim e^{C\left|x\right|^{2}}\,\,\,\,\,\,\,\,\forall(x,t)\in\mathbb{R}^{n}\times[0,T],\]
 for some $C>0$. Then
\begin{enumerate}
\item [1. ]If $\left|u(x,0)\right|\lesssim e^{-M\left|x\right|^{2}}$ for
all $M>0$, then $u\equiv0$.
\item [2. ]If $u(x,0)\not\equiv0$ then there exists $M>0$ such that if
$\left|x\right|>M$, \[
\int_{B(x,1)}\left|u(y,0)\right|^{2}dy\gtrsim e^{-M\left|x\right|^{2}\log\left|x\right|}\,\,\,\,\,\,\,\mbox{and\,\,\,\,\,\,\,}\int_{B(x,\left|x\right|/2)}\left|u(y,0)\right|^{2}dy\gtrsim e^{-M\left|x\right|^{2}}.\]

\end{enumerate}
\end{thm}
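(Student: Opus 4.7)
The two parts are closely linked. Under the hypothesis of part~1, if $u(\cdot,0)\not\equiv 0$ then on $B(x,|x|/2)$ we have $|y|\ge|x|/2$, so the hypothesis gives $\int_{B(x,|x|/2)}|u(y,0)|^2\,dy\lesssim |x|^n e^{-M'|x|^2/2}$ for every $M'>0$; choosing $M'$ larger than $2M$ contradicts the lower bound $\gtrsim e^{-M|x|^2}$ furnished by part~2. Thus part~2, together with a backward-uniqueness theorem for variable-coefficient parabolic operators (available under the present regularity of $\{a^{ij}\}$), implies part~1, and I focus on proving part~2.

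\textbf{Strategy.} Following Escauriaza--Kenig--Ponce--Vega, the proof rests on two estimates. The first is a parabolic Carleman inequality with Gaussian weight adapted to the metric $g=A^{-1}$: for $\alpha$ large and $w$ compactly supported in a parabolic cylinder,
\[
\sqrt{\alpha}\,\bigl\|e^{\alpha\phi}w\bigr\|_{L^2}\le C\,\bigl\|e^{\alpha\phi}Pw\bigr\|_{L^2},
\]
with $\phi(x,t)$ of the schematic form $\rho_A(x,x_0)^2/(t+\delta)$, where $\rho_A$ is the Riemannian distance of $g$. The second is a Hadamard-type log-convexity estimate for $H(t)=\int|u(x,t)|^2\,d\mu_t(x)$, with $d\mu_t$ a Gaussian measure centered at $x_0$, asserting $H(t_*)\le H(0)^{\theta}H(T)^{1-\theta}$ for every $t_*\in(0,T)$.

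\textbf{Deriving the lower bounds.} Fix $x_0$ with $|x_0|=R$ large. The global growth bound controls $H(T)\lesssim e^{CR^2}$. Applying the log-convexity with $t_*$ and the scale of $\mu$ chosen so that $\mu_{t_*}$ has unit mass on the relevant ball yields a lower bound for $H(0)$, from which the two estimates of part~2 follow by localizing $\mu_0$ near $x_0$ at the appropriate scale. The extra $\log R$ factor in the first estimate appears because concentrating $\mu_0$ essentially on $B(x_0,1)$ requires variance $\sim 1/\log R$, which feeds back into the exponent in the Hadamard inequality.

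\textbf{Main obstacle.} The central difficulty is carrying the Carleman and log-convexity analysis through for variable coefficients. Commuting $P$ with a Gaussian weight generates error terms involving $\nabla_x a^{ij}$, $\partial_t a^{ij}$, and the discrepancy between $\rho_A$ and $|x|$. The decay hypothesis $|\nabla_x a^{ij}|\lesssim\langle x\rangle^{-1-\epsilon}$ renders the spatial commutators subcritical: a factor of $|x|$ from $\nabla\phi$ paired with $|\nabla a|\lesssim\langle x\rangle^{-1-\epsilon}$ produces an $\langle x\rangle^{-\epsilon}$ gain, absorbable into the leading positive term. The weighted H\"older-in-time bound $|a^{ij}(x,t)-a^{ij}(x,s)|\lesssim\langle x\rangle^{-1}|t-s|^{1/2}$ is exactly the scale-invariant statement needed so that after the parabolic rescaling $v(y,s)=u(x_0+Ry,R^2 s)$, which reduces the problem at spatial scale $R$ to one on unit scale, the rescaled coefficients retain uniform H\"older-$1/2$ regularity in time; this permits EKPV-type estimates on the rescaled cylinder with constants independent of $R$. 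The bound $|\partial_t a^{ij}|\lesssim 1$ handles the remaining commutator between the weight and $\partial_t$. Verifying that all these perturbations can be absorbed while preserving the sign structure of the Carleman calculus is where the bulk of the technical work must be done.
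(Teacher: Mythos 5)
Your reduction of part~1 to part~2 is fine in spirit (it matches the paper's logic: a lower bound for $u(\cdot,0)\not\equiv0$ contradicts the decay hypothesis, and backward uniqueness finishes). The gap is in your proof of part~2. Your plan is to run the EKPV scheme --- Gaussian-weight Carleman plus Hadamard log-convexity --- after the parabolic rescaling $v(y,s)=u(x_0+Ry,R^2s)$, claiming the rescaled coefficients satisfy uniform bounds. This is exactly the step the paper singles out as failing. To get a doubling/log-convexity estimate that connects $B(x_0,|x_0|/2)$ directly to the region near the origin where $u$ is nontrivial, the rescaled cylinder must have radius $R\gtrsim|x_0|$ and hence contain the origin; there $\|\nabla a^{ij}_{x_0,R}\|_\infty\sim R\sup_{|z|\lesssim1}|\nabla a^{ij}(z)|\sim R$ and $\|\partial_t a^{ij}_{x_0,R}\|_\infty\sim R^2$, and the time-H\"older constant degenerates to $\langle x_0+Ry\rangle^{-1}R\sim R$ near the origin. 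The hypotheses $|\nabla a^{ij}|\lesssim\langle x\rangle^{-1-\epsilon}$ etc.\ only give $R$-independent constants if the rescaled ball stays in the regime $|x|\sim|x_0|$, i.e.\ for rescaling radius $R\sim\lambda|x_0|/4$ --- but then the doubling property you obtain holds only at scales $\ll|x_0|$ and cannot reach the origin in one step.

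What the paper actually does to close this gap is absent from your proposal. With the small rescaling radius one gets doubling only on balls of radius $\lesssim\rho\lambda|x_0|$, and a chain-of-balls iteration from $x_0$ to $e_1$ (about $\log|x_0|$ doublings, each costing $e^{J|x_k|^2}$ raised to a power $K>1$) yields only the weak lower bound $\int_{B(x_0,\rho\lambda|x_0|/2)}u^2(\cdot,t)\gtrsim e^{-C_s|x_0|^s}$ with some $s>2$. The exponent is then improved to $2$ by a bootstrap (Proposition 2.5) built on a genuinely new Carleman inequality with weight $\exp\bigl(E_1R(T-t)|x|+E_2|x-R\psi(t)e_1|^2\bigr)$ for functions supported in $\{R^{1/\delta}\le|x|\le R\}$, iterated so that $s\mapsto\max\{2,(s-1)/\delta+1\}$ converges to $2$; the space-time (rather than fixed-time) nature of the resulting bounds is also why the upper bound must be reproved in two steps (a shrinking-time-interval estimate followed by a second Carleman inequality). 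Without this bootstrap your argument can only deliver $e^{-M|x|^2\log|x|}$-type or worse lower bounds on $B(x,|x|/2)$, not the claimed $e^{-M|x|^2}$, and part~1 would not follow. So the proposal as written does not contain the key idea needed to prove the theorem for variable coefficients.
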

~

We also obtain a similar result where the domain is a half-space.

\begin{thm}
Let $\mathbb{R}_{+}^{n}=\left\{ x\in\mathbb{R}^{n}:x_{1}>0\right\} $.
Suppose that $\{a^{ij}\}$ satisfy the ellipticity condition (\ref{eq:ellipticity}),
and for some $\epsilon>0$\begin{eqnarray*}
 &  & \left|\nabla_{x}a^{ij}(x,t)\right|\lesssim\left\langle x\right\rangle ^{-1-\epsilon},\,\,\,\left|\partial_{t}a^{ij}(x,t)\right|\lesssim1,\\
 &  & \left|a^{ij}(x,t)-a^{ij}(x,s)\right|\lesssim\left\langle x\right\rangle ^{-1}\left|t-s\right|^{1/2},\,\,\,\,\,\,\,\,\,\,\,\,\,\,\,\forall x\in\mathbb{R}_{+}^{n};\,\,\, t,s\in[0,T].\end{eqnarray*}
Assume that $u$ satisfies the inequalities \[
\left|Pu\right|\leq C(\left|u\right|+\left|\nabla u\right|)\,\,\,\,\,\mbox{ in }\mathbb{R}_{+}^{n}\times[0,T]\]
and \[
\left|u(x,t)\right|\lesssim e^{C\left|x\right|^{2}}\,\,\,\,\,\,\,\,\forall(x,t)\in\mathbb{R}_{+}^{n}\times[0,T],\]
for some $C>0$. Then
\begin{enumerate}
\item [1. ]If $\left|u(x,0)\right|\lesssim e^{C\left|x\right|^{2}-Mx_{1}^{2}}$
for all $M>0$, then $u\equiv0$.
\item [2. ]If $u(\cdot,0)\not\equiv0$ then there exists $M>0$ such that
if $R>M$, \[
\int_{B(Re_{1},1)}\left|u(y,0)\right|^{2}dy\gtrsim e^{-MR^{2}\log R}\,\,\,\,\,\,\,\mbox{and\,\,\,\,\,\,\,}\int_{B(Re_{1},R/2)}\left|u(y,0)\right|^{2}dy\gtrsim e^{-MR^{2}}.\]

\end{enumerate}
\end{thm}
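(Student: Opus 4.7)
The plan is to follow the template of the proof of Theorem 1.1, with modifications for the half-space geometry, and to deduce part 1 from part 2. For part 2, I would translate the problem: fix $R \gg 1$ and set $v(y,t) = u(y + Re_1, t)$ on a cylinder $B(0, R/2) \times [0,T]$ that sits well inside the half-space $\mathbb{R}_+^n$. The coefficient decay $|\nabla_x a^{ij}| \lesssim \langle x\rangle^{-1-\epsilon}$ forces the translated coefficients $\tilde a^{ij}(y,t) := a^{ij}(y+Re_1,t)$ to be small perturbations of the $y$-independent family $\tilde a^{ij}(0, t)$ on this ball; this is precisely the mechanism that in the whole-space proof allows one to reduce to essentially constant-coefficient Carleman methods.

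On this cylinder I would apply the same family of parabolic Carleman estimates (with Gaussian/Hermite-type weight) that drive the proof of Theorem 1.1, now in the compactly supported setting obtained by multiplying $v$ by a cutoff in $y$. Combined with a three-cylinder or frequency-function type inequality, these yield
\[
\int_{B(0,1)} |v(y,0)|^2\, dy \gtrsim e^{-MR^2 \log R},
\]
and, with a coarser choice of parameter, the weaker $e^{-MR^2}$ bound on the larger ball. Translating back gives the two lower bounds of part 2. The \emph{a priori} growth $|u(x,t)| \lesssim e^{C|x|^2}$, which on $B(Re_1, R/2)$ becomes $\lesssim e^{C'R^2}$, enters through the $t = T$ boundary term of the Carleman estimate and is what forces the $R^2$ scaling in the exponent; the extra $\log R$ arises from optimizing the free parameter of the weight against this growth, exactly as in the whole-space argument.

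Part 1 would then follow by contradiction. Assume $u(\cdot,0) \not\equiv 0$; part 2 gives $\int_{B(Re_1,1)} |u(y,0)|^2\, dy \gtrsim e^{-MR^2 \log R}$ for all large $R$. But the hypothesis $|u(x,0)| \lesssim e^{C|x|^2 - M' x_1^2}$, evaluated at points $y \in B(Re_1,1)$ where $|y|^2 \lesssim R^2$ and $y_1^2 \gtrsim R^2$, yields $\int_{B(Re_1,1)} |u(y,0)|^2\, dy \lesssim e^{(C''-2M')R^2}$ for every $M' > 0$. Taking $M'$ large enough contradicts the lower bound as $R \to \infty$.

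The main obstacle is the Carleman estimate itself: one must choose a weight whose conjugated-operator commutator is positive modulo variable-coefficient errors, and those errors must be controllable on balls of radius $\sim R$ centered at $Re_1$. The decay $\langle x\rangle^{-1-\epsilon}$ of $\nabla_x a^{ij}$ and the Hölder-$1/2$-in-time condition are precisely what allows this absorption on the relevant scale, while the logarithmic loss in the first lower bound is the most delicate point and is likely obtained by a two-parameter family of weights — coupling a Gaussian spatial scale to a time cutoff — followed by an optimization between the two parameters.
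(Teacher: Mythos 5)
There is a genuine gap here, in fact two. First, your deduction of part 1 from part 2 fails on the arithmetic of the exponents: the decay hypothesis gives $\int_{B(Re_1,1)}|u(y,0)|^2\,dy\lesssim e^{(C''-2M')R^2}$, and for any \emph{fixed} $M'$ this is eventually \emph{larger} than $e^{-MR^2\log R}$, since $-MR^2\log R\to-\infty$ faster than any multiple of $-R^2$; the $\log R$ makes the first lower bound of part 2 too weak to contradict Gaussian-type decay, so there is no contradiction. (The second bound of part 2, over $B(Re_1,R/2)$, would give one, but see the next point.) More importantly, the logical order is reversed relative to what actually works: in the paper part 2 is \emph{extracted from} the proof of part 1 --- the upper-bound step shows that smallness of $\int_{B(x,1)}u^2(y,0)$ on an annulus would propagate, via the Carleman inequality, to an upper bound on the space-time integral $\int_0^{T}\int (u^2+|\nabla u|^2)$ over a suitable region; the contrapositive against the space-time lower bound, followed by the doubling/chain-of-balls inequality (whose $\sim\log R$ iterations are the true source of the $\log R$), yields part 2. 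The contradiction proving part 1 is between upper and lower bounds for the same \emph{space-time} integral, not between $t=0$ slice bounds, and the lower bound with exponent exactly $R^2$ is not obtainable from a one-shot frequency-function or three-cylinder argument: the direct doubling argument only gives $e^{-R^s}$ for some $s>2$, and one needs the bootstrap $s\mapsto\max\{2,\frac{s-1}{\delta}+1\}$ driven by the Carleman weight with the moving center $x_1-R\psi(t)$ to reach $s=2$.

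Second, your reduction to balls $B(0,R/2)$ centered at $Re_1$ with isotropic Gaussian weights misses the essential anisotropy of the half-space problem: the hypothesis $|u(x,0)|\lesssim e^{C|x|^2-Mx_1^2}$ gives decay only in the $x_1$ direction and still permits growth like $e^{C|x'|^2}$ tangentially, so no estimate localized to a ball of radius $R/2$ about $Re_1$ can close --- the cutoff errors in $x'$ are not small. The paper's weights contain the negative term $-\lambda|x'|^2/(8s)$ precisely to absorb this tangential growth, the relevant support regions are slabs $\{R\le x_1\le R^{1+\epsilon/8}\}$ (resp. $\{R^{1/\delta}\le x_1\le R\}$) rather than balls, and the bootstrap must track an expanding tangential scale $|x'|\lesssim C_sR^{s/2}$. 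This anisotropic structure is the main new content of Theorem 1.2 over Theorem 1.1, and it is absent from your outline.
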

The proof in \cite{MR2231129} for the heat operator used a Carleman
inequality together with a scaling argument to show $u(x,0)$ has
a doubling property which implies that \[
\int_{B(x,\left|x\right|/2)}\left|u(y,0)\right|^{2}dy\gtrsim e^{-M\left|x\right|^{2}}\]
for some $M>0$. This argument breaks down in the variable coefficients
case, as it requires a uniform bound on $\left\Vert \nabla a_{x_{0},R}^{ij}\right\Vert _{L^{\infty}}$
where $a_{x_{0},R}^{ij}(x,t)=a^{ij}(x_{0}+Rx,R^{2}t)$ and $R>\left|x_{0}\right|$.

To prove the first part of Theorem 1.1, we first show that if $\left|u(x,0)\right|\lesssim e^{-M\left|x\right|^{2}}$
for all $M>0$ then there exists $T_{0}\in[0,T]$ such that for any
$M\geq0$, \[
\int_{0}^{T_{0}}\int_{B(x,\left|x\right|/2)}u^{2}(y,t)dydt\lesssim e^{-M\left|x\right|^{2}}\mbox{ \,\,\,\,\,\, if }\left|x\right|\geq R_{M}.\]
Then we show that if $u(\cdot,0)\not\equiv0$, for any $T_{0}\in[0,T]$,
the following lower bound holds \[
\int_{0}^{T_{0}}\int_{B(x,\left|x\right|/2)}u^{2}(y,t)dydt\gtrsim e^{-C_{2}\left|x\right|^{2}}\]
where $C_{2}=C_{2}(T_{0},u)\geq0$. (a similar bound for the Schr\"odinger
equation was proved in \cite{MR2273975}.) Thus, we must have $u(\cdot,0)\equiv0$,
which then implies $u\equiv0$ in $\mathbb{R}^{n}\times[0,T]$. The
proof of Theorem 1.2 follows the same argument, using anisotropic
Carleman inequalities instead, as now $u$ decays in the direction
of $x_{1}$ only.

We would like to mention a unique continuation result of \cite{MR1971939,MR2001174}.
Let $u$ be a solution of the inequality $\left|Pu\right|\leq M(\left|u\right|+\left|\nabla u\right|)$
in $B(0,1)\times[0,T]$ which vanishes to infinite order at $0$,
i.e. $\left|u(x,0)\right|\leq C_{k}\left|x\right|^{k}$ for all $k\geq0$,
$\forall x\in B(0,1)$. Then $u(\cdot,0)\equiv0$ in $B(0,1)$. We
have benefited from the Carleman inequalities and ideas contained
in these papers, and also from those of \cite{MR040,MR1979722,MR2005639,MR2273975,MR2231129}.

Details of the proofs of Theorem 1.1 and 1.2 are in section 2 and
3, respectively. The proofs of the Carleman inequalities used in section
2 and 3 will be gathered in section 4, together with other auxiliary
lemmas.

\textbf{Acknowledgement.} I would like to thank my thesis advisor
Carlos Kenig for suggesting the problem, and for his invaluable guidance
and support.

\section{Proof of theorem 1.1}

We first remark that by considering $P_{r}=\partial_{t}+\mbox{div}\left(A_{r}\nabla\right)$
where $a_{r}^{ij}(x,t)=a^{ij}(rx,r^{2}t)$ and $u_{r}(x,t)=u(rx,r^{2}t)$
for suitably small $r>0$, we can assume that the constant $C$ in
the hypothesis of Theorem 1.1 is as small as we like, say $C\leq\lambda^{5}/100$,
and that $\left|\partial_{t}a^{ij}(x,t)\right|\leq C$. Furthermore,
we can take $T=1$.

\subsection{Upper bound}

In this section we will adapt the arguments of \cite{MR2005639} to
show that under the hypothesis of Theorem 1.1, there exists $T_{0}>0$
such that for any $M>0$, if $\left|x\right|\geq R_{M}>0$ \begin{equation}
\left|u(x,t)\right|+\left|\nabla u(x,t)\right|\lesssim e^{-M\left|x\right|^{2}}\mbox{ for all }t\in[0,T_{0}].\label{eq: upperbound}\end{equation}
First, we prove the weaker bound \begin{equation}
\left|u(x,t)\right|+\left|\nabla u(x,t)\right|\lesssim e^{-M\left|x\right|^{2}}\,\,\,\,\,\mbox{if }0\leq t\lesssim M^{-1}.\label{eq:weak upperbound}\end{equation}
(Note that the time interval of this weaker bound shrinks as $M\rightarrow\infty$.)
Then we combine this bound with $M=2$ and another Carleman inequality
to obtain (\ref{eq: upperbound}).

\subsubsection{First step}

We will use the following Carleman inequality of \cite{MR1971939}.

\begin{lem}
\label{1st Carleman}Suppose $a^{ij}(0,0)=\delta_{ij}$ and $\left|a^{ij}(x,t)-a^{ij}(y,s)\right|\leq L\left(\left|x-y\right|+\left|s-t\right|^{1/2}\right)$.
Then there is a constant $N=N(n,\lambda,L)>0$ such that for any $\alpha\geq2$
there is a positive function $\sigma:(0,\frac{4}{\alpha})\rightarrow\mathbb{R}_{+}$
satisfying \[
N^{-1}\leq\frac{\sigma(t)}{t}\leq1\]
so that if $v\in C_{c}^{\infty}(\mathbb{R}^{n}\times[0,\frac{2}{\alpha}))$
and $0<a<1/\alpha$, then 

\begin{eqnarray*}
 &  & \int_{\mathbb{R}^{n+1}}(\alpha^{2}v^{2}+\alpha\sigma_{a}\left|\nabla v\right|^{2})\sigma_{a}^{-\alpha}G_{a}dxdt\leq N\int_{\mathbb{R}^{n+1}}\sigma_{a}^{1-\alpha}\left|Pv\right|^{2}G_{a}dxdt\\
 &  & +\sigma(a)^{-\alpha}\left[-\frac{a}{N}\int_{\mathbb{R}^{n}}\left|\nabla v(x,0)\right|^{2}G_{a}(x,0)dx+\alpha N\int_{\mathbb{R}^{n}}v^{2}(x,0)G_{a}(x,0)dx\right]\\
 &  & +\alpha^{\alpha}N^{\alpha}\sup_{t\geq0}\int_{\mathbb{R}^{n}}(v^{2}+\left|\nabla v\right|^{2})dx\end{eqnarray*}
Here $G_{a}(x,t)=(t+a)^{-n/2}e^{-\left|x\right|^{2}/4(t+a)}$ and
$\sigma_{a}(t)=\sigma(t+a)$.
\end{lem}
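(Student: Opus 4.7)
The plan is to run the standard conjugation--commutator method for parabolic Carleman estimates. Set $\varphi(x,t)=-\tfrac{\alpha}{2}\log\sigma_a(t)-\tfrac{n}{4}\log(t+a)-\tfrac{|x|^2}{8(t+a)}$, so that $e^{2\varphi}=\sigma_a^{-\alpha}G_a$, and let $f=e^{\varphi}v$. The desired inequality is equivalent to an estimate for $e^{\varphi}Pv$ in $L^2(dx\,dt)$ in terms of weighted $L^2$ norms of $f$ and $\nabla f$. I would conjugate, writing $e^{\varphi}P(e^{-\varphi}f)=S f+A f$ with $S$ symmetric and $A$ antisymmetric on $L^2(\mathbb{R}^n\times\mathbb{R})$, and exploit
\[
\|Sf+Af\|^2=\|Sf\|^2+\|Af\|^2+\langle[S,A]f,f\rangle.
\]
The game is then to extract positive lower bounds for $\alpha^2\int f^2\,e^{-2\varphi}$ type quantities and $\alpha\sigma_a\int|\nabla f|^2\,e^{-2\varphi}$ from $\langle[S,A]f,f\rangle$.

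In the model case $a^{ij}\equiv\delta_{ij}$, the commutator $[S,A]$ is a well-known second-order positive operator with leading coefficients involving $\alpha/(t+a)^2$ and $\alpha/(t+a)$, modulo terms of indefinite sign arising from the $\partial_t$ piece. The role of $\sigma$ is exactly to kill those indefinite terms: one chooses $\sigma(t)$ as the solution of a scalar ODE (roughly $\sigma'\sim 1+O(\alpha^{-1})$ with $\sigma(0)=0$) arranged so that $N^{-1}t\le\sigma(t)\le t$ on $(0,4/\alpha)$ and so that the $\partial_t$--induced pieces in $[S,A]$ become nonnegative on $\mathrm{supp}\,v$. This gives the two main terms on the left.

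To pass from the constant-coefficient model to the variable-coefficient operator, use that the Gaussian weight $G_a$ concentrates mass where $|x|^2\lesssim t+a$ and the time support is $t\lesssim 1/\alpha$. Combined with $a^{ij}(0,0)=\delta_{ij}$ and the Lipschitz hypothesis, one obtains $|a^{ij}-\delta_{ij}|\lesssim L(\sqrt{t+a}+t^{1/2})\lesssim L\alpha^{-1/2}$ effectively on the region where the weight is not negligible. All the extra terms produced in $[S,A]$ by the differences $a^{ij}-\delta_{ij}$ and by $\nabla a^{ij}$ are therefore of lower order in $\alpha$ and can be absorbed into the two positive leading terms once $\alpha\ge 2$ is taken large enough (with $N$ depending on $n,\lambda,L$).

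The boundary term $\sigma(a)^{-\alpha}[-\tfrac{a}{N}\int|\nabla v(x,0)|^2 G_a(x,0)\,dx+\alpha N\int v^2(x,0)G_a(x,0)\,dx]$ comes from the fact that $v$ is not required to vanish at $t=0$: the integration by parts in $t$ produces a boundary contribution whose gradient part has the favourable sign (hence appears with a minus on the right), while the $v^2$ part must be retained. Finally, the $\alpha^{\alpha}N^{\alpha}\sup_t\int(v^2+|\nabla v|^2)\,dx$ term arises from a smooth time cutoff near $t=2/\alpha$ inserted so that the integration by parts is legitimate up to the compact support of $v$; on the cutoff region $\sigma_a^{-\alpha}\lesssim(c/\alpha)^{-\alpha}\le(\alpha N)^{\alpha}$, producing precisely this factor. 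The principal obstacle is the joint calibration of $\sigma(t)$ and the largeness of $\alpha$ so that (i) the positive commutator dominates the variable-coefficient errors uniformly in $a$, and (ii) the boundary term signs at $t=0$ come out correctly; once $\sigma$ is chosen, everything else is a careful bookkeeping of lower-order terms.
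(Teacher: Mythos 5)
First, a point of reference: the paper does not prove Lemma \ref{1st Carleman} at all --- it is quoted from Escauriaza--Fern\'andez \cite{MR1971939}, and the only related material in the appendix is the multiplier identity of Lemma \ref{Basic lmm}, which is the engine behind it. Your framework (conjugation by $\sigma_a^{-\alpha/2}G_a^{1/2}$, symmetric/antisymmetric splitting, positivity of the commutator rescued by the auxiliary function $\sigma$) is the same circle of ideas, merely written in the $\|Sf\|^{2}+\|Af\|^{2}+\langle[S,A]f,f\rangle$ language rather than as an integration-by-parts identity, so the choice of method is not the issue.

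The gap is in how you dispose of the variable-coefficient errors and where you locate the third error term. The hypothesis gives $|a^{ij}(x,t)-\delta_{ij}|\le L(|x|+t^{1/2})$, which is small only where $|x|$ is small; on the support of $v$ the time variable is indeed $O(1/\alpha)$, but $|x|$ is only bounded by the (arbitrary) spatial support of $v$. Your assertion that the perturbation terms are ``lower order in $\alpha$ \dots\ effectively on the region where the weight is not negligible'' cannot be made rigorous as stated: the commutator produces contributions of the type $(a^{ij}-\delta_{ij})\,x_ix_j(t+a)^{-2}f^{2}$, and absorbing them into the leading positive terms requires restricting to a region $\{|x|\le\theta\}$ with $\theta=\theta(n,\lambda,L)$ fixed. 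It is precisely the complementary region $\{|x|>\theta\}$ --- where one can only use the crude bound $\sigma_a^{-\alpha}G_a\le N^{\alpha}\sup_{s>0}s^{-\alpha-n/2}e^{-\theta^{2}/4s}\le(N\alpha)^{\alpha+n/2}$ --- that generates the term $\alpha^{\alpha}N^{\alpha}\sup_{t}\int(v^{2}+|\nabla v|^{2})\,dx$. Attributing that term to a smooth time cutoff near $t=2/\alpha$ is a misdiagnosis: no such cutoff is needed, since $v$ is already compactly supported in $[0,2/\alpha)$, and without the spatial localization the absorption step simply fails. Relatedly, ``once $\alpha\ge2$ is taken large enough'' is at odds with the statement, which demands a single $N=N(n,\lambda,L)$ valid for \emph{every} $\alpha\ge2$ (and in the application $\alpha$ is later chosen in terms of other quantities); the smallness needed in the absorption must come from $\theta$ and from the construction of $\sigma$, not from sending $\alpha\to\infty$.
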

Since the hypothesis of the lemma requires $a^{ij}(0,0)=\delta_{ij}$,
we first need to make a change of variable. Let $x_{0}\in\mathbb{R}^{n}$
with $\left|x_{0}\right|\gtrsim1$. Let $S=A(x_{0},0)^{1/2}$, $z_{0}=S^{-1}x_{0}$,
$\widetilde{u}(x,t)=u(Sx,t)$ and $\widetilde{A}(x,t)=S^{-1}A(Sx,t)S^{-1}$.
Then $\widetilde{A}(z_{0},0)=\mbox{Id}$ and \[
\partial_{t}\widetilde{u}+\mbox{div}(\widetilde{A}\nabla\widetilde{u})|_{(x,t)}=\partial_{t}u+\mbox{div}(A\nabla u)|_{(Sx,t)}.\]
Let $\widetilde{u}_{R}$ be a rescale of $\widetilde{u}$ centered
at $z_{0}$, $\widetilde{u}_{R}(x,t)=\widetilde{u}(z_{0}+Rx,R^{2}t)$
where $R=\lambda\left|x_{0}\right|/4$. Then $\widetilde{u}_{R}$
satisfies \[
\left|P_{R}\widetilde{u}_{R}\right|\leq R^{2}\left|\widetilde{u}_{R}\right|+R\left|\nabla\widetilde{u}_{R}\right|\]
where $P_{R}=\partial_{t}+\mbox{div}(\widetilde{A}_{R}\nabla)$, and
$\widetilde{A}_{R}(x,t)=\widetilde{A}(z_{0}+Rx,R^{2}t)$. 

From the hypothesis of Theorem 1.1 and our choice of $R$, it is easy
to see that \[
\left|\nabla\widetilde{a}_{R}^{ij}(x,t)\right|\lesssim1,\left|\widetilde{a}_{R}^{ij}(x,t)-\widetilde{a}_{R}^{ij}(x,s)\right|\lesssim\left|t-s\right|^{1/2}\]
in $B(0,2)\times[0,1/R^{2})$. Furthermore, $\widetilde{A}_{R}(0,0)=\widetilde{A}(z_{0},0)=\mbox{Id}$.
Thus, we can apply Lemma \ref{1st Carleman} to $P_{R}$ and $v=\widetilde{u}_{R}\psi(x)\varphi(t)$,
where $\chi_{[0,1/\alpha]}\leq\varphi\leq\chi_{[0,2/\alpha)}$ and
$\chi_{B(0,1)}\leq\psi\leq\chi_{B(0,2)}$ are bump functions, $\alpha\geq2R^{2}$
is a positive constant to be chosen. Let $E=B(0,2)\times[0,2/\alpha)\backslash B(0,1)\times[0,1/\alpha]$
then \[
\left|P_{R}v\right|\leq R^{2}\left|v\right|+R\left|\nabla v\right|+\alpha(\left|\widetilde{u}_{R}\right|+\left|\nabla\widetilde{u}_{R}\right|)\chi_{E}.\]
Hence, by Lemma \ref{1st Carleman}, \begin{eqnarray*}
\int_{\mathbb{R}^{n+1}}(\alpha^{2}v^{2}+\alpha\sigma_{a}\left|\nabla v\right|^{2})\sigma_{a}^{-\alpha}G_{a}dxdt & \lesssim & N\int_{\mathbb{R}^{n+1}}\sigma_{a}^{1-\alpha}(R^{2}\left|v\right|+R\left|\nabla v\right|)^{2}G_{a}dxdt+\\
 &  & N\int_{E}\sigma_{a}^{1-\alpha}\alpha^{2}(\left|\widetilde{u}_{R}\right|+\left|\nabla\widetilde{u}_{R}\right|)^{2}G_{a}dxdt+\\
 &  & \alpha^{\alpha}N^{\alpha}\sup_{t\geq0}\int_{\mathbb{R}^{n}}(v^{2}+\left|\nabla v\right|^{2})dx+\\
 &  & \alpha N\sigma(a)^{-\alpha}\int_{\mathbb{R}^{n}}v^{2}(x,0)G_{a}dx.\end{eqnarray*}
If $\alpha\geq2NR^{2}$ then the first term on the right hand side
can be absorbed by the left hand side. Also, $\sigma_{a}(t)^{-\alpha}G_{a}(x,t)\leq N^{\alpha}\alpha^{\alpha+\frac{n}{2}}$
in $E$, and $\left|\widetilde{u}_{R}\right|+\left|\nabla\widetilde{u}_{R}\right|\lesssim e^{CR^{2}}$
by hypothesis on $u$ and Lemma \ref{basic L8 bound} in the Appendix.
Thus, we obtain\begin{eqnarray*}
\int_{\mathbb{R}^{n+1}}(\alpha^{2}v^{2}+\alpha\sigma_{a}\left|\nabla v\right|^{2})\sigma_{a}^{-\alpha}G_{a}dxdt & \lesssim & N^{\alpha}\alpha^{\alpha+\frac{n}{2}}e^{2CR^{2}}+\alpha N\sigma(a)^{-\alpha}\int_{\mathbb{R}^{n}}v^{2}(x,0)G_{a}dx.\end{eqnarray*}

Let $\rho=\frac{1}{Ne}$, and $a=\frac{\rho^{2}}{2\alpha}$. Then
\[
\sigma_{a}(t)^{-\alpha+1}G_{a}(x,t)\geq\alpha^{\alpha+\frac{n}{2}-1}N^{2\alpha+n-2}\mbox{ \,\,\,\,\,\,\,\,\,\,\,\, in }B(0,2\rho)\times[0,\frac{\rho^{2}}{2\alpha}]\]
and \[
\sigma(a)^{-\alpha}G_{a}(x,0)\leq N^{\alpha}a^{-\alpha-\frac{n}{2}}=(2\alpha e^{2})^{\alpha+\frac{n}{2}}N^{3\alpha+n}\,\,\,\,\mbox{for all }x.\]
Hence, \begin{eqnarray*}
 &  & \alpha^{\alpha+\frac{n}{2}}N^{2\alpha+n-2}\int_{B(0,2\rho)\times[0,\frac{\rho^{2}}{2\alpha}]}(v^{2}+\left|\nabla v\right|^{2})dxdt\lesssim N^{\alpha}\alpha^{\alpha+\frac{n}{2}}e^{2CR^{2}}\\
 &  & \,\,\,\,\,\,\,\,\,\,\,\,\,\,\,\,\,\,\,\,\,\,\,\,\,\,\,\,\,\,\,\,\,+\alpha^{\alpha+\frac{n}{2}+1}(2e)^{2\alpha+n}N^{3\alpha+n+1}\int_{B(0,2)}v^{2}(x,0)dx\end{eqnarray*}
or \[
\int_{B(0,2\rho)\times[0,\frac{\rho^{2}}{2\alpha}]}(v^{2}+\left|\nabla v\right|^{2})dxdt\lesssim N^{2-\alpha-n}e^{2CR^{2}}+\alpha(2e)^{2\alpha+n}N^{\alpha+3}\int_{B(0,2)}v^{2}(x,0)dx.\]
We now choose $\alpha=MR^{2}$ then the first term in the right hand
side is bounded by $e^{-MR^{2}}$. The second term is also bounded
by $e^{-MR^{2}}$ by the decay hypothesis on $u(\cdot,0)$. Thus,
for any $M>2N$, \[
\int_{B(0,2\rho)\times[0,\frac{\rho^{2}}{2MR^{2}}]}(v^{2}+\left|\nabla v\right|^{2})dxdt\lesssim e^{-MR^{2}}.\]
By Lemma \ref{basic L8 bound}, this implies \[
\left|v\right|+\left|\nabla v\right|\lesssim e^{-MR^{2}}\,\,\,\,\,\,\mbox{in }B(0,\rho)\times[0,\frac{\rho^{2}}{4MR^{2}}].\]
Undoing the change of variable, we get \[
\left|u(x_{0},t)\right|+\left|\nabla u(x_{0},t)\right|\lesssim e^{-MR^{2}}\,\,\,\,\,\,\mbox{if }0\leq t\leq\frac{\rho^{2}}{4M}.\]
This proves (\ref{eq:weak upperbound}).

\subsubsection{Second step.}

\begin{lem}
\label{lem:upper 1} Let $\epsilon$  be the constant in the hypothesis
of Theorem 1.1. Let \[
G(x,t)=\exp\left(c(T-t)\left|x\right|+\left|x\right|^{2}\right)\]
 where $0\leq c\leq R^{1+\epsilon/8}$. Then for any $v\in C_{c}^{\infty}(\left\{ R\leq\left|x\right|\leq R^{1+\epsilon/8}\right\} \times[0,T])$,
the following inequality holds \begin{eqnarray*}
 &  & \frac{\lambda^{2}}{4}\int_{\mathbb{R}^{n+1}}v^{2}Gdxdt+\frac{\lambda^{2}}{4}\int_{\mathbb{R}^{n+1}}\left|\nabla v\right|^{2}Gdxdt\leq\int_{\mathbb{R}^{n+1}}\left|Pv\right|^{2}Gdxdt\\
 &  & +\lambda^{-1}\int_{\mathbb{R}^{n}}\left|\nabla v(x,T)\right|^{2}G(x,T)dx+R^{2+\epsilon/4}\int_{\mathbb{R}^{n}}v^{2}(x,0)G(x,0)dx,\end{eqnarray*}
 provided $R\gtrsim1$.
\end{lem}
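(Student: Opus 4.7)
The plan is to prove this as a standard conjugated-operator Carleman estimate. I would set $\phi(x,t) = c(T-t)|x| + |x|^2$, so $G = e^{\phi}$, and introduce the conjugated function $w = e^{\phi/2} v$. A direct computation splits the conjugated operator as $L_\phi w := e^{\phi/2} P(e^{-\phi/2} w) = Sw + Aw$ where
\[
Sw = \partial_i(a^{ij}\partial_j w) + \bigl(\tfrac{1}{4}a^{ij}\partial_i\phi\,\partial_j\phi - \tfrac{1}{2}\partial_t\phi\bigr) w
\]
is formally symmetric and
\[
Aw = \partial_t w - a^{ij}\partial_j\phi\,\partial_i w - \tfrac{1}{2}\partial_i(a^{ij}\partial_j\phi)\,w
\]
is formally antisymmetric in $L^2(\mathbb{R}^{n+1})$. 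Since $v$, hence $w$, is compactly supported in space, $\int|Pv|^2 G = \int |L_\phi w|^2 \ge 2\int Sw\cdot Aw$, and everything reduces to bounding the cross term from below.

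Next I would expand $2\langle Sw, Aw\rangle$ term by term. Pairings of the principal parts, after integration by parts, produce the \emph{positive commutator} contribution of size
\[
\int 2\,a^{ij}a^{k\ell}(\partial_i\partial_k\phi)\,\partial_j w\,\partial_\ell w\,dx\,dt + \int \Bigl(-\tfrac{1}{2}\partial_t^2\phi + \tfrac{1}{2}\partial_t\bigl(a^{ij}\partial_i\phi\,\partial_j\phi\bigr)\Bigr)\,w^2\,dx\,dt.
\]
The Hessian of $\phi$ satisfies $\partial_i\partial_j\phi = 2\delta_{ij} + c(T-t)|x|^{-1}(\delta_{ij}-\hat x_i\hat x_j)\ge 2\delta_{ij}$, so ellipticity of $A$ yields at least $2\lambda^2|\nabla w|^2$ in the interior. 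The $w^2$ coefficient is bounded below by a constant times $|A\nabla\phi|^2 \sim |x|^2$, which translates, after undoing the conjugation, into the desired $v^2 G$ bound on the left-hand side. The pairings involving $\partial_t w$ integrate by parts in $t$ to give boundary terms at $t = 0$ and $t = T$: at $t=T$ one obtains a term controlled by $\lambda^{-1}\int |\nabla v(x,T)|^2 G(x,T)\,dx$, while at $t=0$ the multiplication coefficient $\tfrac{1}{4}|A\nabla\phi|^2 - \tfrac{1}{2}\partial_t\phi$ evaluated at $t=0$ has size $O(|x|^2 + c|x|) \lesssim R^{2+\epsilon/4}$ on the support, which accounts precisely for the prefactor in front of $\int v^2(x,0) G(x,0)\,dx$.

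The core technical step, which I expect to be the main obstacle, is controlling the lower-order error terms that arise because $A$ is no longer constant. These fall into two families: terms proportional to $\nabla a^{ij}$, contracted with one or two copies of $\nabla\phi$, and terms proportional to $\partial_t a^{ij}$ weighted by $\nabla\phi$ or by no derivative. On the support $R \le |x| \le R^{1+\epsilon/8}$ the hypotheses give $|\nabla a^{ij}| \lesssim R^{-1-\epsilon}$ and $|\partial_t a^{ij}|\lesssim 1$, while $|\nabla\phi|\lesssim |x|\lesssim R^{1+\epsilon/8}$. The worst such error term is therefore of order $R^{-1-\epsilon}\cdot R^{1+\epsilon/8}\cdot R^{1+\epsilon/8} = R^{1-3\epsilon/4}$ against $w^2$ (versus the principal $|x|^2 \gtrsim R^2$), and of order $R^{-\epsilon}$ against $|\nabla w|^2$ (versus the principal $\lambda^2$). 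Both are absorbed by the positive commutator once $R$ is sufficiently large, leaving the stated $\lambda^2/4$ constant.

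Finally, I would convert back from $w$ to $v$: since $|\nabla v|^2 G = |\nabla w - \tfrac{1}{2}(\nabla\phi) w|^2$, an integration by parts in $x$ (using the spatial compact support of $v$) exchanges the cross term for $\tfrac{1}{2}\Delta\phi\,w^2$, and the resulting $\int(|\nabla w|^2 + \tfrac{1}{4}|\nabla\phi|^2 w^2)$ is precisely what the positive commutator delivers. Combining with the boundary computations gives the inequality as stated. The smallness requirement $R\gtrsim 1$ is used exactly to ensure that the $R^{-\epsilon/2}$-type error gains beat the absolute constants hiding in the lower-order terms.
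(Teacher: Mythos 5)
Your overall strategy coincides with the paper's: Lemma \ref{Basic lmm} and its corollary Lemma \ref{lem:basic 2} are exactly the packaged form of the symmetric--antisymmetric multiplier identity you describe (with $\alpha=0$, $\sigma(t)=e^{t}$; your decomposition corresponds to the canonical choice $F=\frac{\partial_{t}G-\Delta G}{G}$), and your accounting of the Hessian term, the $|A\nabla\phi|^{2}$ term and the time-boundary terms is right in outline. There is, however, a genuine gap in the error analysis. When you expand $2\langle Sw,Aw\rangle$, the pairing of the principal part $\partial_{i}(a^{ij}\partial_{j}w)$ of $S$ with the zeroth-order part $-\tfrac{1}{2}\partial_{i}(a^{ij}\partial_{j}\phi)\,w$ of $A$ produces, after integration by parts, a term of the form $\tfrac{1}{2}\int a^{k\ell}\partial_{\ell}w\,\partial_{k}\bigl(\partial_{i}(a^{ij}\partial_{j}\phi)\bigr)\,w$, which involves \emph{second} spatial derivatives of $a^{ij}$. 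The hypotheses of Theorem 1.1 control only $|\nabla_{x}a^{ij}|$ and $|\partial_{t}a^{ij}|$, so your assertion that the variable-coefficient errors ``fall into two families: terms proportional to $\nabla a^{ij}$ \ldots{} and terms proportional to $\partial_{t}a^{ij}$'' fails for the naive decomposition, and the offending term cannot be estimated. This is precisely what the paper's auxiliary pair $F$, $F_{0}$ is for: the zeroth-order coefficient of the antisymmetric factor is replaced by a function $F$ built from $a^{ij}$ alone (the $\partial_{i}a^{ij}\partial_{j}\phi$ contribution, being $O(R^{-7\epsilon/8})$, is shifted into the bounded gradient coefficient $H=\frac{\partial_{t}G-\Delta G}{G}-F$), and $F$ is further approximated by $F_{0}$ with coefficients frozen at a point $X$, so that $\Delta F_{0}$ requires no derivatives of $a^{ij}$ and $\nabla(F-F_{0})$ requires only first derivatives; the residual term $\int v\langle A\nabla v,\nabla(F-F_{0})\rangle G$ is then absorbed by Cauchy--Schwarz. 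Some such device is needed for your argument to close under the stated regularity.

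A second, quantitative point: the $\partial_{t}a^{ij}$ contributions are not $O(R^{-\epsilon})$ against $|\nabla w|^{2}$. They are $O(\sup|\partial_{t}a^{ij}|)$ against $|\nabla w|^{2}$ and $O(\sup|\partial_{t}a^{ij}|\cdot|\nabla\phi|^{2})$ against $w^{2}$, i.e.\ of the \emph{same} order as the main positive terms $2\lambda^{2}|\nabla w|^{2}$ and $\lambda^{2}|\nabla\phi|^{2}w^{2}$; taking $R$ large does not help. Absorption requires $\sup|\partial_{t}a^{ij}|$ to be small relative to powers of $\lambda$, which the paper arranges by the parabolic rescaling at the beginning of Section 2 (making $|\partial_{t}a^{ij}|\le\lambda^{5}/100$); you should invoke that normalization explicitly.
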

This Carleman inequality is an extension of a Carleman inequality
in \cite{MR2005639} to the case of variable coefficients. As $\{a^{ij}\}$
are no longer constants, it is necessary to put a restriction on the
support of $v$ (compared with \cite{MR2005639}.) We will prove this
inequality in the Appendix. We now deduce (\ref{eq: upperbound})
from (\ref{eq:weak upperbound}) and Lemma \ref{lem:upper 1}. 

\begin{prop}
\label{pro:upper} Suppose that $u$ is as in the hypothesis of Theorem
1.1, and $\left|u(x,0)\right|\lesssim e^{-M\left|x\right|^{2}}$ for
all $M>0$. Let $T=\rho^{2}/8N$, where $\rho$ and $N$ are as above.
Then for all $M>0$,\[
\left|u(x,t)\right|+\left|\nabla u(x,t)\right|\lesssim e^{-M\left|x\right|^{2}}\]
for all $t\in[0,T/4]$.
\end{prop}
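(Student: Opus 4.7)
The plan is to apply the Carleman inequality of Lemma~\ref{lem:upper 1} to a suitable cutoff of $u$. Given $M>0$ and a target point $x_0$ with $R_0:=|x_0|$ large, I introduce cutoffs $\eta_1\in C_c^\infty(\{R\le|x|\le R^{1+\epsilon/8}\})$ with $\eta_1\equiv 1$ on $\{2R\le|x|\le R^{1+\epsilon/8}/2\}$, and $\eta_2\in C^\infty([0,T])$ with $\eta_2\equiv 1$ on $[0,T/2]$ and $\eta_2(T)=0$. Here $R=R_0/3$ (so that $x_0$ lies well inside $\{\eta_1\equiv 1\}$) and the Carleman parameter $c\in[0,R^{1+\epsilon/8}]$ is taken near its maximum. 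Applying Lemma~\ref{lem:upper 1} to $v=u\,\eta_1\,\eta_2$ and expanding $Pv$ by Leibniz, the bulk contribution $\eta_1\eta_2\,Pu$ satisfies $|\eta_1\eta_2 Pu|\le C(|v|+|\nabla v|)+(\text{commutator})$, so using the smallness $C\le\lambda^5/100$ (arranged by the initial scaling) it is absorbed into the $\tfrac{\lambda^2}{4}$ coefficient of the left-hand side.

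The remaining error contributions on the right-hand side come from: (i) the inner shell $|x|\sim R$ and (ii) the outer shell $|x|\sim R^{1+\epsilon/8}$ (both from $\nabla\eta_1$); (iii) the temporal strip $t\in[T/2,3T/4]$ (from $\partial_t\eta_2$); and (iv) the initial term $R^{2+\epsilon/4}\int v^2(\cdot,0)\,G(\cdot,0)\,dx$. The terminal boundary term at $t=T$ vanishes since $\eta_2(T)=0$. On the shells and temporal strip I would invoke the weak upper bound (\ref{eq:weak upperbound}) with $M_1=2N$, which is valid on all of $[0,T]$ since $T=\rho^2/(8N)$: this yields $|u|+|\nabla u|\lesssim e^{-2N|x|^2}$ there. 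For (iv) I use the hypothesis $|u(\cdot,0)|\lesssim e^{-M_0|x|^2}$ for \emph{arbitrarily large} $M_0$, choosing $M_0$ large enough (depending on $M$ and $R$) that the resulting term is dominated by the Carleman weight at the target.

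Combining with the lower bound $\int v^2 G\,dx\,dt\ge|u(x_0,t)|^2\,G(x_0,t)\cdot\mathrm{vol}$ (promoted from $L^2$ to pointwise via Lemma~\ref{basic L8 bound}) and dividing by $G(x_0,t)\gtrsim\exp(R_0^2+c_1\,TR_0^{2+\epsilon/8})$ (for $t\in[0,T/4]$, using $c=R^{1+\epsilon/8}$ and $R=R_0/3$) gives $|u(x_0,t)|^2\lesssim e^{-MR_0^2}$ for any $M$, since the super-Gaussian gain $TR_0^{2+\epsilon/8}$ in the exponent beats any fixed $MR_0^2$ when $R_0$ is large. The bound on $|\nabla u|$ follows in the same way. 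The main obstacle is (iii): a Laplace-method analysis of the integrand $|u|^2 G$ on the temporal strip shows it concentrates at an interior radius $|x|_\ast\sim cT$ with peak $\sim\exp(c^2T^2/\mathrm{const})$, which could overwhelm the gain at $x_0$ if $c$ is chosen naively. Resolving this requires a careful calibration of $c$, $R$, and the value of $M_1$ used on the temporal strip (possibly $M_1=O(1/T)$ in place of $2N$), exploiting the smallness of $T=O(N^{-3})$ from the initial scaling together with the precise algebraic balance between the $|x|^2$ and $c(T-t)|x|$ terms in the Carleman weight.
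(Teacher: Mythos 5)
Your proposal diverges from the paper's argument in a way that leaves a genuine, unresolved gap — one you yourself flag at the end. The paper's proof of Proposition \ref{pro:upper} uses a \emph{purely spatial} cutoff: $v=u\theta(x)$ with $\theta$ supported in $\{R-1<|x|<MR+1\}$, and the Carleman parameter is calibrated to the target decay rate, $c=MR/T$, rather than pushed to its maximum $R^{1+\epsilon/8}$. There is no temporal cutoff because none is needed: Lemma \ref{lem:upper 1} already carries the terminal boundary term $\lambda^{-1}\int|\nabla v(x,T)|^{2}G(x,T)\,dx$, and this term is benign precisely because the weight $c(T-t)|x|$ \emph{vanishes} at $t=T$, so $G(x,T)=e^{|x|^{2}}$ is beaten by the weak bound $|\nabla u|\lesssim e^{-2N|x|^{2}}$ valid on all of $[0,T]=[0,\rho^{2}/8N]$. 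The conclusion is then read off at $|x|\sim 6R$, where $G\ge e^{4MR^{2}}$, against a total right-hand side of size $e^{(M+2)R^{2}}$ coming from the inner shell. The $M$-dependence enters through the choice of $c$ and the width $MR$ of the annulus, and the argument is run once for each $M$.

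By contrast, your temporal cutoff $\eta_{2}$ replaces the harmless terminal term with an error supported on the slab $t\in[T/2,3T/4]$ over the \emph{entire} annulus $\{R\le|x|\le R^{1+\epsilon/8}\}$, where $c(T-t)|x|$ is far from zero. With $c\sim R^{1+\epsilon/8}$, the integrand $e^{-4N|x|^{2}}G$ on that slab peaks near $|x|_{*}\sim cT/(4(4N-1))\sim TR^{1+\epsilon/8}$ with peak value $\exp\bigl(c^{2}T^{2}/\mathrm{const}\bigr)\sim\exp\bigl(T^{2}R^{2+\epsilon/4}/\mathrm{const}\bigr)$, whereas the gain at your target $|x_{0}|=3R$, $t\le T/4$, is only $\exp\bigl(\mathrm{const}\cdot TR^{2+\epsilon/8}\bigr)$. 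Since $R^{2+\epsilon/4}\gg R^{2+\epsilon/8}$, the slab error overwhelms the gain for large $R$, so the division step fails. Your suggested fix — taking $M_{1}=O(1/T)$ on the slab — does not help, because the weak bound (\ref{eq:weak upperbound}) with parameter $M_{1}$ is only valid on a time interval of length $\lesssim M_{1}^{-1}$, so requiring it on $[T/2,3T/4]$ forces $M_{1}\lesssim T^{-1}\sim N$, which is no better than $2N$. (A secondary issue: you cannot let the decay parameter $M_{0}$ in the hypothesis on $u(\cdot,0)$ depend on $R$, since the implied constant in $|u(x,0)|\lesssim e^{-M_{0}|x|^{2}}$ depends on $M_{0}$.) The fix is to abandon the temporal cutoff and the maximal choice of $c$, and instead follow the paper's calibration $c=MR/T$ with support out to $|x|\sim MR$.
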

\begin{proof}
Fix $M>0$. Let \[
v(x,t)=u(x,t)\theta(x)\]
where \[
\theta(x)=\begin{cases}
\begin{array}{cc}
0 & \mbox{ if }\left|x\right|<R-1\mbox{ or }\left|x\right|>MR+1\\
1 & \mbox{ if }R<\left|x\right|<MR\end{array}\end{cases}\]
Since \[
Pv=\theta Pu+2\left\langle A\nabla u,\nabla\theta\right\rangle +u\Delta\theta,\]
it follows that\begin{eqnarray*}
\left|Pv\right| & \leq & C\theta\left(\left|u\right|+\left|\nabla u\right|\right)+2\lambda^{-1}\left|\nabla u\right|\left|\nabla\theta\right|+\left|u\Delta\theta\right|\\
 & \leq & C\left(\left|v\right|+\left|\nabla v\right|\right)+\left|u\right|\left(C\left|\nabla\theta\right|+\left|\Delta\theta\right|\right)+2\lambda^{-1}\left|\nabla u\right|\left|\nabla\theta\right|\\
 & \leq & C\left(\left|v\right|+\left|\nabla v\right|\right)+C'\left(\left|u\right|+\left|\nabla u\right|\right)\chi_{E}\end{eqnarray*}
where $E=\left(\left\{ R-1<\left|x\right|<R\right\} \cup\{MR<\left|x\right|<MR+1\}\right)\times[0,T]$,
$C'\leq4\lambda^{-1}$. 

Choose $c=MR/T$. Then for large $R$, $c\leq R^{1+\epsilon/8}$ and
$\mbox{supp}v\subset\left\{ R\leq\left|x\right|\leq R^{1+\epsilon/8}\right\} $.
Thus, we can apply the previous lemma to $v$ to obtain\begin{eqnarray*}
\int_{0}^{T}\int_{\mathbb{R}^{n}}(v^{2}+\left|\nabla v\right|^{2})Gdxdt & \lesssim & \int_{\mathbb{R}^{n}}\left|\nabla v(x,T)\right|^{2}G(x,T)dx+R^{2+\epsilon/4}\int_{\mathbb{R}^{n}}v^{2}(x,0)G(x,0)dx\\
 &  & +\int_{E}\left(\left|u\right|^{2}+\left|\nabla u\right|^{2}\right)Gdxdt.\end{eqnarray*}
In the previous subsection, we have shown that 

\[
\left|u(x,t)\right|+\left|\nabla u(x,t)\right|\lesssim e^{-2N\left|x\right|^{2}}\mbox{ for all }t\in[0,T].\]
Hence, as $G(x,T)=e^{\left|x\right|^{2}}$, we have \[
\int_{\mathbb{R}^{n}}\left|\nabla v(x,T)\right|^{2}Gdx\lesssim1.\]
Since $\left|u(x,0)\right|\lesssim e^{-2M\left|x\right|^{2}}$ and
$G(x,0)\leq e^{(M+1)\left|x\right|^{2}}$ if $\left|x\right|\geq R$
by our choice of $c$, it follows that \[
R^{2+\epsilon/4}\int_{\mathbb{R}^{n}}v^{2}(x,0)Gdx\lesssim1.\]
 In $\{MR<\left|x\right|<MR+1\}$, $G(x,t)\leq e^{2\left|x\right|^{2}}$,
hence, \[
\int_{0}^{T}\int_{MR<\left|x\right|<MR+1}\left(\left|u\right|^{2}+\left|\nabla u\right|^{2}\right)Gdxdt\lesssim R^{n-1}.\]
In $\left\{ R-1<\left|x\right|<R\right\} $, $G(x,t)\leq e^{(M+1)R^{2}}$,
so

\[
\int_{0}^{T}\int_{R-1<\left|x\right|<R}\left(\left|u\right|^{2}+\left|\nabla u\right|^{2}\right)Gdxdt\lesssim e^{(M+2)R^{2}}.\]
Thus,\[
\int_{0}^{T}\int_{\mathbb{R}^{n}}(v^{2}+\left|\nabla v\right|^{2})Gdxdt\lesssim e^{(M+2)R^{2}}.\]
As $G(x,t)\geq e^{4MR^{2}}$ in $\{6R\leq\left|x\right|\leq7R\}\times[0,T/2]$,
this implies \begin{eqnarray*}
\int_{0}^{T/2}\int_{6R\leq\left|x\right|\leq7R}\left(\left|u\right|^{2}+\left|\nabla u\right|^{2}\right)dxdt & \lesssim & e^{-MR^{2}},\end{eqnarray*}
provided $R\geq R_{M}$. This and Lemma \ref{basic L8 bound} prove
the proposition.
\end{proof}

\subsection{Lower Bound}

In this subsection, assuming $u(\cdot,0)\not\equiv0$, we will show
that the following lower bound holds for any $T\leq1$, \begin{equation}
\int_{0}^{T}\int_{R<\left|x\right|<2R}u^{2}(x,0)dx\gtrsim e^{-C_{2}R^{2}}\label{eq:lower}\end{equation}
To prove this, we first adapt arguments of \cite{MR2198840} and \cite{MR040}
to show that there exists $s>0,$ such that for small $t$, we have
\begin{equation}
\int_{R<\left|x\right|<2R}u^{2}(x,t)dx\gtrsim e^{-R^{s}}.\label{eq:weak lower}\end{equation}
Then we use this bound together with a bootstrap argument to obtain
\eqref{eq:lower}.

\subsubsection{First step}

Since $u(\cdot,0)\not\equiv0$, we can suppose that, \[
\int_{B(e_{1},\rho\lambda/4)}u^{2}(x,0)dx\ne0.\]
Here $\rho$ is a positive constant to be chosen. By using Lemma \ref{Lemma},
and multiplying $u$ by a constant if necessary, we can assume that
\begin{equation}
\int_{B(e_{1},\rho\lambda/2)}u^{2}(x,t)dx\geq L\label{eq:trivia}\end{equation}
if $t$ is small enough. Here $L$ is a large constant to be chosen.

We will use the doubling property of $u(\cdot,0)$ proved by Escauriaza,
Fern\'andez and Vessella. We present their arguments here in the
form that we need. Let $x_{0}=\left|x_{0}\right|e_{1}$, and $v$
be as in section 2.1. As before, if $\alpha\geq2NR^{2}$ the following
inequality holds\begin{eqnarray}
 &  & \int_{\mathbb{R}^{n+1}}(\alpha^{2}v^{2}+\alpha\sigma_{a}\left|\nabla v\right|^{2})\sigma_{a}^{-\alpha}G_{a}dxdt\leq N^{\alpha}\alpha^{\alpha+\frac{n}{2}}e^{CR^{2}}+\nonumber \\
 &  & \sigma(a)^{-\alpha}\left[-\frac{a}{N}\int_{\mathbb{R}^{n}}\left|\nabla v(x,0)\right|^{2}G_{a}dx+\alpha N\int_{\mathbb{R}^{n}}v^{2}(x,0)G_{a}dx\right].\label{eq:first 1}\end{eqnarray}
Let $\rho=\frac{1}{Ne}$ and $0<a\leq\rho^{2}/2\alpha$. Then, \begin{eqnarray}
\alpha^{2}\int_{\mathbb{R}^{n+1}}v^{2}\sigma_{a}^{-\alpha}G_{a} & \geq & \alpha^{2}\int_{0}^{\rho^{2}/\alpha}dt\int_{B(0,2\rho)}(t+a)^{-\alpha-\frac{n}{2}}e^{-\rho^{2}/(t+a)}v^{2}(x,t)dx\nonumber \\
 & \geq & N_{\rho}\alpha^{2}\int_{a}^{a+\rho^{2}/\alpha}s^{-\alpha-\frac{n}{2}}e^{-\rho^{2}/s}ds\int_{B(0,\rho)}v^{2}(x,0)dx\label{eq:first 2}\\
 & \geq & N_{\rho}\alpha^{2}\int_{\rho^{2}/2\alpha}^{\rho^{2}/\alpha}s^{-\alpha-\frac{n}{2}}e^{-\rho^{2}/s}ds\int_{B(0,\rho)}v^{2}(x,0)dx\nonumber \\
 & \geq & \frac{N_{\rho}\alpha^{\alpha+\frac{n}{2}+1}N^{2\alpha}}{2}\int_{B(0,\rho)}v^{2}(x,0)dx.\nonumber \end{eqnarray}
(we have used Lemma \ref{Lemma} in the second inequality. $N_{\rho}$
is the constant appears in that lemma.) Here, $\alpha$ has to satisfy
\[
\rho^{2}/\alpha\leq N_{\rho}^{-1}\min\left\{ R^{-2},1/\log\left(\frac{N_{\rho}\int_{B(0,1)\times[0,R^{-2}]}v^{2}(x,t)dxdt}{\int_{B(0,\rho)}v^{2}(x,0)dx}\right)\right\} \]
As $\left|v(x,t)\right|\lesssim e^{CR^{2}}$, we can take \[
\alpha=\rho^{2}N_{\rho}\left(2R^{2}+\log\frac{N_{\rho}}{\int_{B(0,\rho)}v^{2}(x,0)dx}\right).\]
For this value of $\alpha$, \[
\frac{N_{\rho}\alpha^{\alpha+\frac{n}{2}+1}N^{2\alpha}}{2}\int_{B(0,\rho)}v^{2}(x,0)dx\geq N^{\alpha}\alpha^{\alpha+\frac{n}{2}}e^{CR^{2}}.\]
This together with (\ref{eq:first 1}) and (\ref{eq:first 2}) show
that \[
-\frac{a}{N}\int_{\mathbb{R}^{n}}\left|\nabla v(x,0)\right|^{2}G_{a}(x,0)dx+\alpha N\int_{\mathbb{R}^{n}}v^{2}(x,0)G_{a}(x,0)dx\geq0\]
or,\[
2a\int_{\mathbb{R}^{n}}\left|\nabla v(x,0)\right|^{2}G_{a}(x,0)dx+\frac{n}{2}\int_{\mathbb{R}^{n}}v^{2}(x,0)G_{a}(x,0)dx\leq4\alpha N^{2}\int_{\mathbb{R}^{n}}v^{2}(x,0)G_{a}(x,0)dx\]
for all $a\leq\rho^{2}/2\alpha$. 

By Lemma \ref{Lmm doubl}, this implies that \[
\int_{B(0,2r)}v^{2}(x,0)\leq e^{128\alpha N^{2}}\int_{B(0,r)}v^{2}(x,0)\]
for all $0\leq r\leq1/2$. It follows that there exists positive constant
$C_{1}$ and $C_{2}$ such that if $r\leq\rho/2$, \begin{equation}
\left(\int_{B(0,\rho)}v^{2}(x,0)\right)^{1+C_{1}\log\frac{\rho}{r}}\leq e^{C_{2}R^{2}\log\frac{\rho}{r}}\int_{B(0,r)}v^{2}(x,0).\label{eq:true doubling}\end{equation}
Taking $r=\rho\lambda^{2}/2$, we see that there are constants $J$
and $K$ so that \[
\left(\int_{B(0,\rho)}v^{2}(x,0)\right)^{K}\leq e^{JR^{2}/2}\int_{B(0,\rho\lambda^{2}/2)}v^{2}(x,0).\]
This implies, after undoing the changes of variable, \begin{equation}
\left(\int_{B(x_{0},\rho\lambda R)}u^{2}(x,0)\right)^{K}\leq\lambda^{-K}e^{JR^{2}}\int_{B(x_{0},\rho\lambda R/2)}u^{2}(x,0).\label{eq:doubling}\end{equation}
We now use a chain-of-balls argument similar to that of \cite{MR040}.
Let $x_{k+1}=(1-\frac{\rho\lambda^{2}}{8})x_{k}$ for $k=0,1,2,\ldots$.
Then by (\ref{eq:doubling}), 

\[
\left(\int_{B(x_{k+1},\rho\lambda^{2}\left|x_{k+1}\right|/8)}u^{2}(x,0)\right)^{K}\leq\lambda^{-K}e^{J\left|x_{k}\right|^{2}}\int_{B(x_{k},\rho\lambda^{2}\left|x_{k}\right|/8)}u^{2}(x,0)\,\,\,\,\,\,\,\, k=0,1,\ldots\]
Let $m=[\log\left|x_{0}\right|/\log\frac{8}{8-\rho\lambda^{2}}]$
then $\left|x_{m}\right|\sim1$, hence \[
\int_{B(x_{m},\rho\lambda^{2}\left|x_{m}\right|/8)}u^{2}(x,0)\geq\lambda^{K}e^{-J\left|x_{m}\right|^{2}}\left(\int_{B(e_{1},\rho\lambda^{2}/8)}u^{2}(x,0)\right)^{K}\geq1.\]
(we have used (\ref{eq:trivia}) in the last inequality.) It follows
that \[
\int_{B(x_{0},\rho\lambda^{2}\left|x_{0}\right|/8)}u^{2}(x,0)\geq\lambda^{\frac{K^{m+1}-K}{K-1}}e^{-\frac{J(K^{m}-1)}{K-1}\left|x_{0}\right|^{2}},\]
which, by the choice of $m$, implies \[
\int_{B(x_{0},\rho\lambda\left|x_{0}\right|/2)}u^{2}(x,0)\geq e^{-C_{s}\left|x_{0}\right|^{s}}\]
for some positive constants $s$ and $C_{s}$. The same inequality
holds for $u(\cdot,t)$ if $t$ is small so that (\ref{eq:trivia})
holds.

\subsubsection{Second step.}

We now use (\ref{eq:weak lower}) and another Carleman inequality
to prove (\ref{eq:lower}). Let $\psi\in C_{c}^{\infty}(0,T)$ be
a positive bump function satisfying \[
\psi(t)=\begin{cases}
\begin{array}{cc}
0 & \mbox{ if }t\in[0,\frac{T}{8}]\cup[\frac{7T}{8},T]\\
4 & t\in[\frac{T}{4},\frac{3T}{4}]\end{array}.\end{cases}\]
Let $\delta\in(1,1+\epsilon/2)$, where $\epsilon$ is the constant
in the hypothesis of Theorem 1.1. Let \[
S_{R,T}:=\left\{ (x,t):R^{1/\delta}\leq\left|x\right|\leq R,T/8\leq t\leq7T/8\right\} .\]

\begin{lem}
\label{lem:lower 1}Let $G(x,t)=e^{\varphi(x,t)}$ where $\varphi(x,t)=E_{1}R(T-t)\left|x\right|+E_{2}\left|x-R\psi(t)e_{1}\right|^{2}$.
Here $E_{1}\gtrsim T^{-2},E_{2}\gtrsim1$ are constants that may depend
on $R$, but $E_{1}/E_{2}\geq100/T$ is a fixed constant. Then if
$R\geq R_{0}=R_{0}(E_{1}/E_{2},T,\lambda)$, \begin{eqnarray*}
 &  & E_{1}^{3}R^{2}\int_{\mathbb{R}_{+}^{n+1}}v^{2}Gdxdt+E_{2}\int_{\mathbb{R}_{+}^{n+1}}\left|\nabla v\right|^{2}Gdxdt\lesssim\int_{\mathbb{R}_{+}^{n+1}}\left|Pv\right|^{2}Gdxdt,\end{eqnarray*}
for any $v\in C_{c}^{\infty}(S_{R,T})$. The implicit constant depends
only on $T$ and $\lambda$.
\end{lem}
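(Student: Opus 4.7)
I would prove this Carleman inequality by the standard conjugation plus commutator-positivity scheme. Set $w = e^{\varphi/2}v$; then $\int |Pv|^{2} G\,dx\,dt = \int |L_{\varphi}w|^{2}\,dx\,dt$ with $L_{\varphi} := e^{\varphi/2} P e^{-\varphi/2}$, while $\int v^{2} G = \int w^{2}$ and $\int |\nabla v|^{2} G$ differs from $\int |\nabla w|^{2}$ only by a term of the form $\tfrac{1}{4}\int |\nabla\varphi|^{2} w^{2}$ plus an integration-by-parts error, both eventually absorbed by the potential term produced below. A direct calculation yields
\[
L_{\varphi} w = \partial_{t} w + \partial_{i}(a^{ij}\partial_{j} w) - a^{ij}\partial_{i}\varphi\,\partial_{j} w + V w,
\]
where $V = \tfrac{1}{4} a^{ij}\partial_{i}\varphi\,\partial_{j}\varphi - \tfrac{1}{2}\partial_{i}(a^{ij}\partial_{j}\varphi) - \tfrac{1}{2}\partial_{t}\varphi$. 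Decomposing $L_{\varphi} = S + A$ into formally symmetric and antisymmetric parts in $L^{2}(\mathbb{R}^{n+1})$, one obtains $S = \partial_{i}(a^{ij}\partial_{j}) + \tilde V$ with $\tilde V = \tfrac{1}{4}a^{ij}\partial_{i}\varphi\,\partial_{j}\varphi - \tfrac{1}{2}\partial_{t}\varphi$, and $A = \partial_{t} - a^{ij}\partial_{i}\varphi\,\partial_{j} - \tfrac{1}{2}\partial_{i}(a^{ij}\partial_{j}\varphi)$. The identity $\|L_{\varphi}w\|^{2} = \|Sw\|^{2} + \|Aw\|^{2} + \langle[S,A]w,w\rangle$ reduces the proof to producing
\[
\langle[S,A]w,w\rangle \gtrsim E_{1}^{3} R^{2}\int w^{2}\,dx\,dt + E_{2}\int|\nabla w|^{2}\,dx\,dt - (\text{errors}).
\]

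The main positive pieces of $[S,A]$ are of two kinds. First, the commutator $[\partial_{i}(a^{ij}\partial_{j}), -a^{kl}\partial_{k}\varphi\,\partial_{l}]$ produces, after integration by parts, a pointwise quadratic form in $\nabla w$ whose principal part is proportional to the spatial Hessian $\nabla^{2}\varphi = 2E_{2}I + E_{1}R(T-t)|x|^{-1}(I - \hat x\otimes\hat x)$; since the first summand gives $2E_{2}I$ uniformly and the second is positive semidefinite transverse to $\hat x$, ellipticity yields the $E_{2}\int|\nabla w|^{2}$ term. Second, the combined contributions of $[\tilde V,\partial_{t}]$, $[\tilde V, -a^{kl}\partial_{k}\varphi\,\partial_{l}]$, and $[\partial_{i}(a^{ij}\partial_{j}), -\tfrac{1}{2}\partial_{k}(a^{kl}\partial_{l}\varphi)]$ collectively yield a potential of the schematic form $-\partial_{t}\tilde V + a^{ij}\partial_{i}\varphi\,\partial_{j}\tilde V$; using $\partial_{t}\varphi \sim -E_{1}R|x|$ and $|\nabla\varphi|^{2} \sim E_{2}^{2}|x|^{2} + E_{1}^{2}R^{2}T^{2}$ on the support, this produces the $E_{1}^{3}R^{2}\int w^{2}$ term, with the ratio $E_{1}/E_{2}\geq 100/T$ ensuring the favorable sign dominates over the $\psi''$ corrections entering $\partial_{t}^{2}\varphi$.

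Error terms fall into three groups: those from $\nabla a^{ij}$ (bounded by $\langle x\rangle^{-1-\epsilon}|\nabla\varphi|^{2}$), from $\partial_{t}a^{ij}$ (bounded by $|\nabla\varphi|^{2}$), and from $\psi'$, $\psi''$ (of size $T^{-1}$, $T^{-2}$). The support restriction $|x|\geq R^{1/\delta}$ with $\delta\in(1,1+\epsilon/2)$ is precisely what renders the first group negligible: on the support we have $\langle x\rangle^{-1-\epsilon}|\nabla\varphi|^{2}\leq R^{-(1+\epsilon)/\delta}|\nabla\varphi|^{2}$, and $(1+\epsilon)/\delta > 1$ forces this to be $o(1)$ against the main Hessian contribution as $R\to\infty$; the other two groups are absorbed into a small fraction of the $E_{1}^{3}R^{2}$ term via $E_{1}/E_{2}\geq 100/T$. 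Undoing the conjugation $v = e^{-\varphi/2}w$ recovers the stated inequality. The main obstacle is the systematic bookkeeping of $[S,A]$: performing the correct integrations by parts to land on genuine pointwise quadratic forms in $w$ and $\nabla w$, carefully tracking the distinction between $\partial_{i}(a^{ij}\partial_{j}\varphi)$ and $a^{ij}\partial_{i}\partial_{j}\varphi$ (which differ by the error $\partial_{i}a^{ij}\partial_{j}\varphi$), and verifying that every error is strictly absorbed by a fixed fraction of the main positive terms once $R \geq R_{0}(E_{1}/E_{2},T,\lambda)$.
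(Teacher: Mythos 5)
Your overall strategy -- conjugate by the weight, split into symmetric and antisymmetric parts, and extract positivity from $\langle[S,A]w,w\rangle$ -- is the standard route and is morally the same computation the paper performs; the paper just packages it as a ready-made integral identity (Lemma \ref{Basic lmm}, from Escauriaza--Fern\'andez) containing a free auxiliary function $F$, which is then chosen to approximate $(\partial_tG-\Delta G)/G$. There are, however, two concrete points where your plan, as written, would break down. First, regularity of the coefficients: the hypotheses give only $|\nabla_xa^{ij}|\lesssim\langle x\rangle^{-1-\epsilon}$ and $|\partial_ta^{ij}|\lesssim1$, so $a^{ij}$ need not be twice differentiable in $x$. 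The naive expansion of $[S,A]$ (e.g.\ the term $[\Delta_A,-\tfrac12\Delta_A\varphi]$, and the piece of $[\tilde V,-a\nabla\varphi\cdot\nabla]$ in which the gradient falls on $\tfrac12\Delta_A\varphi$) produces expressions involving $\nabla^2a^{ij}$, which are simply not defined. One must either integrate by parts so that at most one derivative ever lands on $a^{ij}$, or do what the paper does: replace the offending part of $F$ by a frozen-coefficient approximant $F_0$ (with $a^{ij}(X,t)$ in place of $a^{ij}(x,t)$), keep the un-integrated cross term $\int v\langle A\nabla v,\nabla(F-F_0)\rangle G$, and absorb it by Cauchy--Schwarz using $|\nabla(F-F_0)|\lesssim R^{3-(2+\epsilon)/\delta}E_1^2$. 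This is not mere bookkeeping; it is the reason the lemma is stated with the support restriction $|x|\geq R^{1/\delta}$, $\delta<1+\epsilon/2$.

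Second, your claim that $|\nabla\varphi|^2\sim E_2^2|x|^2+E_1^2R^2T^2$ on the support treats the two contributions as adding in quadrature, but they can nearly cancel: when $\psi(t)=4$ and $x$ is near $Re_1$, the vector $2E_2(x-R\psi(t)e_1)$ points opposite to $E_1R(T-t)\hat x$. The lower bound $|\nabla\varphi|\gtrsim E_1RT$ -- which is what produces the $E_1^3R^2\int v^2G$ term through the cubic-in-$\nabla\varphi$ part of the commutator -- survives only because $T-t\geq T/8$ on $S_{R,T}$ and $2E_2|x-R\psi(t)e_1|\leq10E_2R\leq\tfrac{1}{10}E_1RT$ by the hypothesis $E_1/E_2\geq100/T$; this is exactly the estimate $F\lesssim-\lambda\bigl|E_1R(T-t)\hat x+2E_2\tilde x\bigr|^2\lesssim-T^2E_1^2R^2$ in the paper's proof. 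You invoke the ratio $E_1/E_2$ only to control the $\psi''$ corrections, but its primary role is to prevent this degeneration of $\nabla\varphi$; without verifying it, the zeroth-order term of the commutator could lose its sign on part of the support.
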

We give a proof of this lemma in the Appendix. Note that in contrast
to Lemma \ref{lem:upper 1}, here the main term in $\varphi$ is $E_{1}R(T-t)\left|x\right|$,
as $E_{1}\gg E_{2}$. The use of the shift $x-R\psi(t)e_{1}$ originates
in a Carleman inequality for Schr\"odinger equations proved in \cite{MR2273975}
(see their Lemma 3.1) 

The next proposition, a corollary of this lemma, is the basis of our
bootstrap argument.

\begin{prop}
\label{pro:iteration}Let $u$ be as in Theorem 1.1. Suppose that
for some $s\geq2$, there exist $C_{s}>0$ such that\[
\int_{T/4}^{3T/4}\int_{R\leq\left|x\right|\leq2R}(u^{2}+\left|\nabla u\right|^{2})dxdt\gtrsim\exp(-C_{s}R^{s})\]
 for all $R\geq C_{s}$. Let $s_{1}=\max\left\{ 2,\frac{s-1}{\delta}+1\right\} $,
where $1<\delta<1+\frac{\epsilon}{2}$. Then there is $C_{s_{1}}>0$
such that 

\[
\int_{0}^{T}\int_{R-1\leq\left|x\right|\leq R}(u^{2}+\left|\nabla u\right|^{2})dxdt\gtrsim\exp(-C_{s_{1}}R^{s_{1}})\]
for all $R\geq C_{s_{1}}$.
\end{prop}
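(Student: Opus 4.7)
The plan is to apply Lemma~\ref{lem:lower 1} at Carleman parameter equal to the conclusion radius $R$. The support $S_{R,T}$ extends inward to radii as small as $R^{1/\delta}$, so the hypothesis (valid at every radius $\rho \geq C_s$) can be invoked at some $\rho \in [R^{1/\delta}, R/2]$ with the improved cost $\rho^s$ rather than $R^s$. A Carleman propagation of lower bounds then transports the hypothesis bound onto the outer spatial transition of a cutoff, which is arranged to coincide with the thin annulus $\{R-1 \leq |x| \leq R\}$.

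Concretely I take $v = u\eta$ with $\eta = \eta_x(x)\eta_t(t)$, where $\eta_x \equiv 1$ on $\{2R^{1/\delta} \leq |x| \leq R-1\}$ and vanishes outside $\{R^{1/\delta} \leq |x| \leq R\}$, and $\eta_t \equiv 1$ on $[T/4, 3T/4]$ and vanishes outside $(T/8, 7T/8)$. Since $|Pv|^2 \lesssim C^2 \eta^2(u^2+|\nabla u|^2) + (|\nabla\eta|^2+|\Delta\eta|^2+|\partial_t\eta|^2)(u^2+|\nabla u|^2)$, the $\eta^2$ term is absorbed into the LHS of Lemma~\ref{lem:lower 1} once $E_1^3 R^2, E_2 \gg C^2$, and what remains reads
\[
E_2 \int (v^2+|\nabla v|^2)\,G\,dx\,dt \lesssim \sum_j \int_{\Sigma_j}(u^2+|\nabla u|^2)\,G\,dx\,dt,
\]
where $\Sigma_j$ runs over the transition layers of $\eta$: the \emph{target} $\{R-1 \leq |x|\leq R\}\times[T/8,7T/8]$, the inner layer near $|x|\sim R^{1/\delta}$, and the two time layers near $t = T/8$ and $t = 7T/8$.

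For the lower bound I place a good annulus $\{\rho \leq |x|\leq 2\rho\}\times[T/4,3T/4]$ inside $\{\eta=1\}$, so that the hypothesis gives $\int v^2 G \gtrsim \exp(\min_{\text{good}}\varphi - C_s\rho^s)$; on the non-target $\Sigma_j$ the growth estimate $|u|,|\nabla u| \lesssim \exp(C|x|^2) \lesssim \exp(CR^2)$ yields $\int_{\Sigma_j}(u^2+|\nabla u|^2)\,G \lesssim \exp(\max_{\Sigma_j}\varphi + CR^2)$. Rearranging (once the non-target terms are dominated by the good-region contribution) produces
\[
\int_{\text{target}} (u^2+|\nabla u|^2)\,dx\,dt \gtrsim \exp\bigl(\min_{\text{good}}\varphi - \max_{\text{target}}\varphi - C_s\rho^s - O(R^2)\bigr).
\]

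The main obstacle is the careful choice of the Carleman parameters $E_1, E_2$ (subject to $E_1 \gtrsim T^{-2}$, $E_2 \gtrsim 1$, and $E_1/E_2 \geq 100/T$; allowed to depend on $R$) and of the good-region radius $\rho \in [R^{1/\delta}, R/2]$, so that the non-target boundary contributions---especially from the time transition near $t = T/8$, where the factor $T-t$ is largest---are dominated by the good-region lower bound, and the resulting exponent matches $s_1$. The ratio condition $E_1/E_2 \geq 100/T$ built into Lemma~\ref{lem:lower 1} is precisely what enables the time-boundary absorption. Writing $\rho = R^a$, the net exponent in the lower bound consists of a cost $C_s R^{as}$ from the hypothesis, a gain of order $E_1 R^{1+a}$ from the linear term of $\varphi$, and a loss of order $E_2 R^2$ from the quadratic shift; optimizing $a \in [1/\delta, 1]$ and the admissible $E_1, E_2$ (with the floor $\max\{2,\cdot\}$ coming from the irreducible shift loss) gives $s_1 = \max\{2, (s-1)/\delta + 1\}$.
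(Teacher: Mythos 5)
Your overall architecture --- invoke the hypothesis at the inner radius $\rho\sim R^{1/\delta}$ where its cost is only $\exp(-C_s R^{s/\delta})$, pay for it by taking $E_1\gtrsim R^{(s-1)/\delta-1}$ so that $E_1R^{1+1/\delta}\gg C_sR^{s/\delta}$, and read the conclusion off the outer spatial transition at cost $\exp(-CE_2R^2)=\exp(-CR^{s_1})$ --- is exactly the paper's, and your exponent bookkeeping is right. The fatal gap is the time boundary. Lemma~\ref{lem:lower 1} has no boundary terms because it demands $v\in C_c^\infty(S_{R,T})$, and you achieve compact support in time with a multiplicative cutoff $\eta_t(t)$. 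The resulting error term lives on $\{R^{1/\delta}\le |x|\le R\}\times([T/8,T/4]\cup[3T/4,7T/8])$, where the only information on $u$ is the growth bound $e^{CR^2}$ and where the weight reaches $\exp\bigl(\tfrac{7T}{8}E_1R^2+O(E_2R^2)\bigr)$ (take $|x|\sim R$ and $t$ near $T/8$). Your good region, sitting at radius $\rho\ll R$, only carries weight $\exp\bigl(\tfrac{T}{4}E_1R^{1+1/\delta}+16E_2R^2\bigr)$. Since $E_1R^2\gg E_1R^{1+1/\delta}$ and $E_1\gg E_2$, the time-layer contribution exceeds the good-region lower bound by a factor of order $\exp(cTE_1R^2)=\exp(cR^{s_1})$ and cannot be absorbed. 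The ratio condition $E_1/E_2\ge 100/T$ does not "enable the time-boundary absorption" as you claim --- it is what makes the Carleman inequality itself pseudoconvex, and if anything it makes the time-layer weight worse. Shrinking the outer support to $\{|x|\le cR\}$ does not rescue the argument either: on the time layers $\psi(t)$ sweeps through $[0,4]$, so $E_2|x-R\psi(t)e_1|^2$ still reaches $(4+c)^2E_2R^2>16E_2R^2$, again beating the good region by $\exp(cE_2R^2)$.

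The device you are missing --- and the reason the weight carries the shift $R\psi(t)e_1$ at all --- is that the paper cuts off in time \emph{through the spatial variable}: $v=u\,\theta_1(x)\,\theta_2(x-R\psi(t)e_1)$, with $\theta_1$ supported in $\{R^{1/\delta}\le|x|\le cR\}$, $c=2^{-11}$, and $\theta_2$ vanishing for $|x|<2R$ and equal to $1$ for $|x|>3R$. When $\psi(t)=0$ (near $t=0$ and $t=T$), $\theta_2(x)=0$ on the support of $\theta_1$, so $v$ vanishes identically there and no $\partial_t$-derivative of a cutoff is ever produced; when $\psi(t)=4$, $|x-4Re_1|>3R$ on $\{|x|\le cR\}$, so $\theta_2\equiv1$ and $v=u\theta_1$. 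The price of switching off in time is paid instead on $\{2R\le|x-R\psi(t)e_1|\le3R\}\cap\{|x|\le cR\}$, where $G\le\exp(c^2E_1TR^2+9E_2R^2)\le\exp(10E_2R^2)$ --- strictly below the good region's $16E_2R^2$ precisely because the shift in the cutoff is synchronized with the shift in $\varphi$. Without this moving cutoff the absorption step in your proof does not close.
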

\begin{proof}
Let $v(x,t)=u(x,t)\theta(x,t)$ where $\theta(x,t)=\theta_{1}(x)\theta_{2}(x-R\psi(t)e_{1})$,
with $\psi$ defined as above, and \[
\theta_{1}(x)=\begin{cases}
\begin{array}{cc}
0 & \mbox{ if }\left|x\right|<R^{1/\delta}\mbox{ or }\left|x\right|>cR\\
1 & \mbox{ if }R^{1/\delta}+1\leq\left|x\right|\leq cR-1\end{array}\end{cases}\]
\[
\theta_{2}(x)=\begin{cases}
\begin{array}{cc}
0 & \mbox{ if }\left|x\right|<2R\\
1 & \mbox{ if }\left|x\right|>3R,\end{array}\end{cases}\]
with $c=2^{-11}$. Clearly, $\mbox{supp}(v)\subset S_{R,T}$.

We have\begin{eqnarray*}
\left|Pv\right| & \leq & C\left(\left|v\right|+\left|\nabla v\right|\right)+\left|u\right|\left(C\left|\nabla\theta\right|+\left|\partial_{t}\theta\right|+\left|\Delta\theta\right|\right)+2\lambda^{-1}\left|\nabla u\right|\left|\nabla\theta\right|\\
 & \leq & C\left(\left|v\right|+\left|\nabla v\right|\right)+C'\left(\left|u\right|+\left|\nabla u\right|\right)\chi_{E}\end{eqnarray*}
 where $E=\mbox{supp}\nabla\theta$ and $C'\leq\lambda^{4}/T$. 

Applying the previous Carleman inequality to $v$, we get\[
\int_{0}^{T}\int_{\mathbb{R}^{n}}(v^{2}+\left|\nabla v\right|^{2})Gdxdt\lesssim\int_{E}\left(\left|u\right|^{2}+\left|\nabla u\right|^{2}\right)Gdxdt.\]
Since\begin{eqnarray*}
\inf_{\begin{array}{cc}
16R^{1/\delta}\leq\left|x\right|\leq32R^{1/\delta}\\
T/4\leq t\leq3T/4\end{array}}\left\{ G(x,t)\right\}  & \geq & \exp\left(4E_{1}TR^{1+\frac{1}{\delta}}+E_{2}\left(4R-32R^{1/\delta}\right)^{2}\right),\end{eqnarray*}
if $E_{1}\geq2\cdot16^{s}C_{s}R^{\frac{s-1}{\delta}-1}$ and $E_{1}/E_{2}=256/T$
then \begin{eqnarray*}
\int_{0}^{T}\int_{\mathbb{R}^{n}}(\left|v\right|^{2}+\left|\nabla v\right|^{2})Gdxdt & \gtrsim & \exp\left(4E_{1}TR^{1+\frac{1}{\delta}}+E_{2}\left(4R-32R^{1/\delta}\right)^{2}-16^{s}C_{s}R^{s/\delta}\right)\\
 & \geq & \exp\left(3E_{1}TR^{1+\frac{1}{\delta}}+16E_{2}R^{2}\right)=:\Sigma\end{eqnarray*}
The set $E$ is contained in the union of $\{R^{1/\delta}\leq\left|x\right|\leq R^{1/\delta}+1\}$,
$\{2R\leq\left|x-R\psi(t)e_{1}\right|\leq3R\}\cap\{\left|x\right|\leq cR\}$
and $\{cR-1\leq\left|x\right|\leq cR\}$. In $\{R^{1/\delta}\leq\left|x\right|\leq R^{1/\delta}+1\}$,
\[
G(x,t)\leq\exp\left(2E_{1}TR^{1+\frac{1}{\delta}}+E_{2}\left(4R+2R^{1/\delta}\right)^{2}\right)\]
and $\left|u\right|+\left|\nabla u\right|\lesssim e^{CR^{2/\delta}}$,
hence\begin{eqnarray*}
 &  & \int_{0}^{T}\int_{R^{1/\delta}\leq\left|x\right|\leq2R^{1/\delta}}\left(\left|u\right|^{2}+\left|\nabla u\right|^{2}\right)Gdxdt\lesssim\\
 &  & \exp\left(2E_{1}TR^{1+\frac{1}{\delta}}+16E_{2}R^{2}+20E_{2}R^{1+\frac{1}{\delta}}+CR^{2/\delta}\right)\ll\Sigma/4.\end{eqnarray*}
In $\{2R\leq\left|x-R\psi(t)e_{1}\right|\leq3R\}\cap\{\left|x\right|\leq cR\}$,
\[
G(x,t)\leq\exp\left(c^{2}E_{1}TR^{2}+9E_{2}R^{2}\right)\leq\exp(10E_{2}R^{2})\]
hence \[
\int_{0}^{T}\int_{2R\leq\left|x-R\psi(t)e_{1}\right|\leq3R,\left|x\right|\leq cR}\left(\left|u\right|^{2}+\left|\nabla u\right|^{2}\right)Gdxdt\ll\Sigma/4.\]
Thus, we conclude that \[
\Sigma/4\leq\int_{0}^{T}\int_{cR-1\leq\left|x\right|\leq cR}\left(\left|u\right|^{2}+\left|\nabla u\right|^{2}\right)Gdxdt.\]
Since in $\{cR-1\leq\left|x\right|\leq cR\}$, $G\leq\exp(25E_{2}R^{2})$,
we obtain \[
\int_{0}^{T}\int_{cR-1\leq\left|x\right|\leq cR}(\left|u\right|^{2}+\left|\nabla u\right|^{2})dxdt\geq\exp(-9E_{2}R^{2}).\]
Recall that we need $E_{1}\geq2\cdot16^{s}C_{s}R^{\frac{s-1}{\delta}-1}$
and $E_{1}\gtrsim T^{-2}$. With the minimum choice $E_{1}\sim\max\{1,R^{\frac{s-1}{\delta}-1}\}$,
we obtain\[
\int_{0}^{T}\int_{cR-1\leq\left|x\right|\leq cR}(\left|u\right|^{2}+\left|\nabla u\right|^{2})dxdt\geq\exp(-C_{s_{1}}R^{s_{1}}).\]
for large $R$. The proposition follows from this.
\end{proof}
\begin{prop}
\label{pro:lower}Suppose $u$ satisfies the assumption of Theorem
1.1. If $u(\cdot,0)\not\equiv0$ then for any $T\leq1$, there exist
$C_{2}=C_{2}(T,u)>0$ such that 

\[
\int_{0}^{T}\int_{R-1\leq\left|x\right|\leq R}(u^{2}+\left|\nabla u\right|^{2})dxdt\gtrsim\exp(-C_{2}R^{2})\]
for all $R\geq C_{2}$.
\end{prop}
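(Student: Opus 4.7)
My approach is to iterate Proposition \ref{pro:iteration} starting from the weak lower bound of the first step. Recall that the first step provides constants $s_{0}>0$, $C_{s_{0}}>0$, and $T^{(0)}=T^{(0)}(u)>0$ such that
\[
\int_{B(x,\rho\lambda|x|/2)}u^{2}(y,t)\,dy\gtrsim e^{-C_{s_{0}}|x|^{s_{0}}},\qquad t\in[0,T^{(0)}],\,\,|x|\geq R_{s_{0}}.
\]
Replacing $s_{0}$ by $\max(s_{0},2)$ and covering the annulus $\{R\leq|y|\leq 2R\}$ by finitely many balls of this type (with centers of size $\sim R$), integration in $t$ over $[T^{(0)}/4,3T^{(0)}/4]$ furnishes the hypothesis of Proposition \ref{pro:iteration} with exponent $s_{0}$ and $T=T^{(0)}$.

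Applying Proposition \ref{pro:iteration} once replaces $s_{0}$ by $s_{1}=\max\{2,(s_{0}-1)/\delta+1\}$, yielding $\int_{0}^{T^{(0)}}\int_{R-1\leq|x|\leq R}(u^{2}+|\nabla u|^{2})\gtrsim e^{-C_{1}R^{s_{1}}}$. To iterate, I need to pass from this conclusion form (integrated over $[0,T]$) back to the hypothesis form (integrated over $[T/4,3T/4]$). My plan is to maintain throughout the iteration a stronger pointwise-in-time lower bound $\int_{B(x,r|x|)}u^{2}(y,t)\,dy\gtrsim e^{-C_{k}|x|^{s_{k}}}$, valid on a shrinking interval $[0,T^{(k)}]$ and uniformly in large $|x|$. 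At each step, after applying Proposition \ref{pro:iteration}, I extract by the pigeonhole principle a time $t_{R}^{*}\in[0,T^{(k)}]$ at which the spatial integral satisfies the desired bound, then invoke a parabolic propagation estimate (Lemma \ref{Lemma}) to extend this spatial bound to a time neighborhood of $t_{R}^{*}$ of length $\tau$ that is uniform in $R$ (since the annular region has unit width, parabolic regularity gives a scale-independent $\tau$). Choosing $T^{(k+1)}<\tau/4$ forces the neighborhood around any $t_{R}^{*}\in[0,T^{(k+1)}]$ to contain all of $[0,T^{(k+1)}]$, so the pointwise-in-time form is recovered uniformly in $R$, ready for the next iteration.

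Since $1<\delta<1+\epsilon/2$, the recursion $s\mapsto(s-1)/\delta+1$ is a strict contraction toward the fixed point $s=1$, so clipped from below by $2$ the sequence $\{s_{k}\}$ reaches $s_{N}=2$ after at most $N=O(\log(s_{0}-1))$ iterations. At termination, $T^{(N)}>0$ and $C_{N}<\infty$ are fixed finite constants, and we obtain $\int_{0}^{T^{(N)}}\int_{R-1\leq|x|\leq R}(u^{2}+|\nabla u|^{2})\gtrsim e^{-C_{2}R^{2}}$. For general $T\in(T^{(N)},1]$, the conclusion follows at once from monotonicity $\int_{0}^{T}\geq\int_{0}^{T^{(N)}}$, while $T\leq T^{(N)}$ is handled by choosing $T^{(N)}$ small enough at the outset (the constants $C_{k}$ depend on $T$). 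The principal technical obstacle is the bridging step: one must verify that the pigeonhole/propagation argument yields a time interval of length uniform in $R$ so that the hypothesis of Proposition \ref{pro:iteration} holds for all sufficiently large $R$ simultaneously, and one must track that the working interval $T^{(k)}$ and constant $C_{k}$ do not degenerate over the finitely many iterations needed.
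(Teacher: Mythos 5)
Your global strategy (iterate Proposition \ref{pro:iteration}, note that the recursion $s\mapsto\max\{2,(s-1)/\delta+1\}$ hits $2$ in finitely many steps, handle general $T$ by monotonicity) is the right one, and you correctly isolate the real difficulty: the conclusion of Proposition \ref{pro:iteration} is an integral over the full time interval, while its hypothesis requires a bound on the middle half. But your proposed bridge does not work. You pigeonhole a time $t_{R}^{*}$ and claim that Lemma \ref{Lemma} (or ``parabolic regularity'') propagates the spatial lower bound to a time neighborhood of $t_{R}^{*}$ of length $\tau$ \emph{uniform in} $R$. The admissible time window in Lemma \ref{Lemma} is $N_{\rho}^{-1}\min\{R^{-2},1/\log(\mbox{ratio})\}$, where the ratio compares the global space--time $L^{2}$ mass (which is as large as $e^{CR^{2}}$) to the local lower bound (which is only $e^{-C_{s}R^{s}}$); hence the window is of order $R^{-\max\{2,s\}}$ and shrinks as $R\to\infty$. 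Unit spatial width does not rescue this: the window is governed by the logarithm of that ratio, not by the spatial scale. Moreover, propagating a \emph{lower} bound to a two-sided neighborhood of $t_{R}^{*}$ is not a regularity statement at all --- local energy estimates for the backward operator move lower bounds in only one time direction, and moving them the other way is precisely the quantitative backward uniqueness one is trying to prove. So your choice ``$T^{(k+1)}<\tau/4$'' cannot be made uniformly in $R$, and the induction collapses.

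The paper sidesteps time propagation entirely. It sets $a_{k}=T(\tfrac12-2^{k-k_{0}-1})$, $b_{k}=T(\tfrac12+2^{k-k_{0}-1})$, a dyadically \emph{expanding} chain of intervals centered at $T/2$ with $[a_{k_{0}},b_{k_{0}}]=[0,T]$, chosen so that the middle half of $[a_{k+1},b_{k+1}]$ is exactly $[a_{k},b_{k}]$. Then the conclusion of Proposition \ref{pro:iteration} on $[a_{k},b_{k}]$ is verbatim the hypothesis for its next application on $[a_{k+1},b_{k+1}]$; the base case is the weak lower bound (\ref{eq:weak lower}), which holds pointwise in $t$ on a short initial interval (after shrinking $T$ if necessary). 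No pigeonhole and no propagation lemma are needed. If you want to keep your scheme, replace the shrinking intervals $[0,T^{(k)}]$ by this expanding nested family; as written, the bridging step is a genuine gap.
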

\begin{proof}
This is a consequence of repeatedly applying the previous proposition.
Let $s_{0}=s$ where $s$ is the exponent appeared in (\ref{eq:weak lower}),
and\[
s_{k+1}=2+\left(\frac{s_{k}-1}{\delta}-1\right)_{+},\,\,\,\,\, k=1,2,3,\ldots\]
It is simple to check that there is $k_{0}$ such that $s_{k}=2$
for all $k\geq k_{0}$. Clearly, we can assume that on $[0,T]$, (\ref{eq:weak lower})
holds. Let $a_{k}=T\left(\frac{1}{2}-2^{k-k_{0}-1}\right)$ and $b_{k}=T\left(\frac{1}{2}+2^{k-k_{0}-1}\right)$.
Since \[
\int_{a_{0}}^{b_{0}}\int_{R<\left|x\right|<2R}\left|u(x,t)\right|^{2}dxdt\gtrsim e^{-R^{s}},\]
the previous proposition (applied to the time interval $[a_{1},b_{1}]$)
shows that \[
\int_{a_{1}}^{b_{1}}\int_{R-1\leq\left|x\right|\leq R}(u^{2}+\left|\nabla u\right|^{2})dxdt\gtrsim\exp(-C_{s_{1}}R^{s_{1}})\,\,\,\,\,\,\,\mbox{if }R\geq C_{s_{1}}\]
for some positive $C_{s_{1}}$. Induction then shows that for any
$k$, there is $C_{s_{k}}>0$ such that \[
\int_{a_{k}}^{b_{k}}\int_{R-1\leq\left|x\right|\leq R}(u^{2}+\left|\nabla u\right|^{2})dxdt\gtrsim\exp(-C_{s_{k}}R^{s_{k}})\,\,\,\,\,\,\,\mbox{if }R\geq C_{s_{1}}.\]
In particular when $k=k_{0}$ we obtain \[
\int_{0}^{T}\int_{R-1\leq\left|x\right|\leq R}(u^{2}+\left|\nabla u\right|^{2})dxdt\gtrsim e^{-C_{2}R^{2}}.\]

\end{proof}

\subsection{Proof of Theorem 1.1}

\begin{proof}
1. Suppose otherwise $u\not\equiv0$. We can assume without loss of
generality that $u(\cdot,0)\not\equiv0$. (if not, we can translate
to a time $0<s<1$ such that $u(\cdot,s)\not\equiv0$. The bounds
$\left|u(x,s)\right|\lesssim e^{-M\left|x\right|^{2}}$ for all $M$,
follows from (\ref{eq: upperbound})). But then we are in position
to apply Proposition \ref{pro:lower}, and obtain a lower bound that
contradicts the upper bound of Proposition \ref{pro:upper}. Thus,
we must have $u\equiv0$.

2. Let $T=\rho^{2}/8N$. Inspecting the proof of Proposition \ref{pro:upper},
we see that to obtain the upper bound\[
\left|u(x,t)\right|+\left|\nabla u(x,t)\right|\lesssim e^{-M\left|x\right|^{2}}\mbox{ in }(B_{7R}\backslash B_{6R})\times[0,T/4],\]
for some $M\geq2N$, it suffices to have \[
\int_{B(x,1)}u^{2}(y,0)\leq e^{-2M\left|x\right|^{2}}\]
for all $x\in B_{2MR}\backslash B_{R/2}$. Hence, in order to avoid
contradiction with the lower bound (\ref{eq:lower}), we must have
\[
\sup_{x\in B_{2MR}\backslash B_{R/2}}\int_{B(x,1)}u^{2}(y,0)\geq e^{-4M^{2}R^{2}},\]
if $M\geq2\max\{N,C_{2}\}$ (here $C_{2}$ is the constant appears
in Proposition \ref{pro:lower}). This and (\ref{eq:doubling}) together
with a chain-of-balls argument shows that \[
\inf_{x\in B_{MR}\backslash B_{R}}\int_{B(x,\rho\lambda R)}u^{2}(y,0)\geq e^{-M_{1}R^{2}},\]
for some $M_{1}>0$. Combining this with the doubling inequality (\ref{eq:true doubling}),
we obtain

\[
\inf_{x\in B_{MR}\backslash B_{R}}\int_{B(x,1)}u^{2}(y,0)\geq e^{-M_{2}R^{2}\log R}.\]
 These estimates prove the second part of the theorem.
\end{proof}
\begin{rem}
As the cutoff functions used in the proof of Theorem 1.1 are radial,
the same results and proofs apply to solutions of $\left|Pu\right|\lesssim\left|u\right|+\left|\nabla u\right|$
in $\left(\mathbb{R}^{n}\backslash B_{R}\right)\times[0,1].$
\end{rem}

\section{Proof of theorem 1.2}

The proof of Theorem 1.2 is very similar to that of Theorem 1.1, using
anisotropic Carleman inequalities. We use the notation $x=(x_{1},x')$.

\subsection{Upper bound}

For the first step, the same argument as in section 2.1.1 shows that
for all $M>0$ \begin{equation}
\left|u(x,t)\right|+\left|\nabla u(x,t)\right|\lesssim e^{C\left|x\right|^{2}-Mx_{1}^{2}}\mbox{ for all }x\in\mathbb{R}_{+}^{n},\label{eq:thm 2 upper 1}\end{equation}
if $0\leq t\lesssim M^{-1}$(here $C$ is the constant in the statement
of Theorem 1.2. Now we can only rescale with $R\sim x_{1}$, resulting
in the weaker bound.)

For the second step, we will need the next lemma, which is inspired
by a Carleman inequality in \cite{MR1979722}. To ease notations,
we will assume that $a_{\infty}^{1j}=0$ for $j\ne1$ where $a_{\infty}^{ij}=\lim_{x\rightarrow\infty}a^{ij}(x,t)$.
Otherwise, we will need to replace $\varphi$ below by\[
\tilde{\varphi}(x,t)=\varphi(x_{1},Bx',t)\]
where $B$ is a positively definite, symmetric $(n-1)\times(n-1)$-matrix,
satisfying $\sum_{j\ne1}B^{ij}a_{\infty}^{1j}=0$ for all $i=2,3,\ldots,n$.
The reader can check that the conclusion of the lemma holds with such
a modification of $\varphi$. (we only use $a_{\infty}^{1j}=0$ to
control the term $I_{4}$ in the proof.)

\begin{lem}
Let $\epsilon$ be the constant in the hypothesis of Theorem 1.2.
Let $G(x,t)=e^{\varphi(x,t)}$ where \[
\varphi(x,t)=-\frac{\lambda\left|x'\right|^{2}}{8s}+\frac{c(S^{\alpha}-s^{\alpha})}{s^{\alpha}}x_{1}+bs.\]
Here $0\leq c\leq R^{1+\epsilon/8}$, $\alpha$ and $b\leq\alpha/4$
are large fixed constants, $s$ is the translated time variable $s=t+1$,
and $S=T+1$. Then for large $R$, for any $v\in C_{c}^{\infty}(\left\{ R\leq x_{1}\leq R^{1+\epsilon/8}\right\} \times[0,T])$,
\begin{eqnarray*}
 &  & \frac{1}{16}\int_{0}^{T}\int_{\mathbb{R}_{+}^{n}}(cRv^{2}+b\left|\nabla v\right|^{2})Gdxdt\leq\int_{0}^{T}\int_{\mathbb{R}_{+}^{n}}\left|Pv\right|^{2}Gdxdt+\int_{\mathbb{R}_{+}^{n}}\left\Vert \nabla v(x,T)\right\Vert ^{2}Gdx\\
 &  & +\int_{\mathbb{R}_{+}^{n}}(\left|x'\right|^{2}+R^{2+\epsilon})v^{2}(x,0)G(x,0)dx+\int_{\mathbb{R}_{+}^{n}}(\left|x'\right|^{2}+R^{2+\epsilon})v^{2}(x,T)G(x,T)dx\end{eqnarray*}

\end{lem}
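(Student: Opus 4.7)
The plan is the standard Carleman conjugation-plus-commutator scheme. Set $f=e^{\varphi/2}v$; a direct computation gives
\[
\mathcal{L}f := e^{\varphi/2}P(e^{-\varphi/2}f) = \partial_t f + \mbox{div}(A\nabla f) - A\nabla\varphi\cdot\nabla f + qf,
\]
with $q = -\tfrac{1}{2}\partial_t\varphi + \tfrac{1}{4}\langle A\nabla\varphi,\nabla\varphi\rangle - \tfrac{1}{2}\mbox{div}(A\nabla\varphi)$. Decompose $\mathcal{L}=S+\mathcal{A}$ into its spatially formally symmetric and antisymmetric parts:
\[
S = \mbox{div}(A\nabla) - \tfrac{1}{2}\partial_t\varphi + \tfrac{1}{4}\langle A\nabla\varphi,\nabla\varphi\rangle, \qquad \mathcal{A} = \partial_t - A\nabla\varphi\cdot\nabla - \tfrac{1}{2}\mbox{div}(A\nabla\varphi).
\]
Since $\|\mathcal{L}f\|^2 \ge 2\langle Sf, \mathcal{A}f\rangle$, integrating by parts in $x$ (nothing from $x_1=0$ since $\mbox{supp}\,v\subset\{x_1\ge R\}$) and in $t$ gives
\[
\int_0^T\!\!\int_{\mathbb{R}_+^n}|Pv|^2 G\,dx\,dt \geq \int_0^T\!\!\int_{\mathbb{R}_+^n}\langle[S,\mathcal{A}]f,f\rangle\,dx\,dt - |\mathcal{B}(0)| - |\mathcal{B}(T)|,
\]
with boundary terms $\mathcal{B}(0),\mathcal{B}(T)$ accounting for the non-trivial time endpoints.

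Positivity of the commutator is the crux. The spatial piece $[\mbox{div}(A\nabla),-A\nabla\varphi\cdot\nabla]$ yields, modulo first-order corrections in $\nabla a^{ij}$, the positive Hessian term $2A\nabla^2\varphi A\nabla f\cdot\nabla f$. Since $\nabla^2\varphi$ equals $-\lambda/(4s)$ times the identity on the $x'$--block and vanishes in the $x_1$--direction, and $a_\infty^{1j}=0$ for $j\ne 1$, this contributes $\gtrsim (b/s)|\nabla_{x'}f|^2$; the missing $b|\partial_{x_1}f|^2$ is recovered after completing the square with the $A\nabla\varphi\cdot\nabla f$ piece of $\|\mathcal{A}f\|^2$, using $(\partial_{x_1}\varphi)^2\gtrsim c^2$. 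The time commutator $[-\tfrac{1}{2}\partial_t\varphi,\partial_t]$ equals $\tfrac{1}{2}\partial_t^2\varphi$, whose dominant positive component $\tfrac{c\alpha(\alpha+1)S^\alpha}{2s^{\alpha+2}}x_1$ yields $\gtrsim cR|f|^2$ on $\{x_1\ge R\}$ provided $\alpha$ is large. The remaining commutator $[\tfrac{1}{4}\langle A\nabla\varphi,\nabla\varphi\rangle,\partial_t]$ is negative but of size $c^2\alpha/s$, easily dominated by $cR|f|^2$ under $c\leq R^{1+\epsilon/8}$ and a suitable choice of $\alpha$. Variable-coefficient errors carry a gain of $R^{-1-\epsilon}$ (from $\nabla a^{ij}$) or $R^{-\epsilon}$ (from $a^{ij}-a_\infty^{ij}$) and are absorbable into the main positive terms.

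The boundary terms arise from the time-antisymmetry of $\partial_t$ coupling with the zero-order piece of $S$ and from the basic energy identity for $\mbox{div}(A\nabla)$; they produce $\int|\nabla v|^2 G + \int(|\partial_t\varphi|+|A\nabla\varphi|^2)v^2 G$ at $t=0,T$. Since $|\partial_t\varphi|\lesssim|x'|^2/s^2+cR/s^{\alpha+1}$ and $|A\nabla\varphi|^2\lesssim c^2+|x'|^2/s^2$, the bound $c\le R^{1+\epsilon/8}$ yields $\lesssim|x'|^2+R^{2+\epsilon}$, matching the right-hand side of the claimed inequality.

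The main obstacle is the off-diagonal coupling $a^{1j}\partial_j\varphi\cdot\partial_1\varphi$ for $j\ne 1$: the product of the huge $\partial_1\varphi\sim cR$ with the Gaussian-type $\partial_j\varphi=-\lambda x_j/(4s)$ would otherwise yield an uncontrollable $cR\,|x'|/s$ contribution in the commutator. The assumption $a_\infty^{1j}=0$ for $j\ne 1$ (or the conjugation by the matrix $B$ introduced before the lemma) is exactly what kills this obstruction: on $\{x_1\ge R\}$ it forces $|a^{1j}|\lesssim R^{-\epsilon}$, endowing the cross term with the $R^{-\epsilon}$ gain needed for absorption into $(b/s)|\nabla_{x'}f|^2$ and $cR|f|^2$.
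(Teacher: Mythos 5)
Your overall framework (conjugation by $e^{\varphi/2}$, symmetric/antisymmetric splitting, positivity of the commutator, boundary terms at $t=0,T$) is the same family of argument the paper runs through its Lemma \ref{lem:basic 2}, but the mechanism you give for the two main positive terms is incorrect, and this is exactly the heart of the lemma. First, the Hessian of $\varphi$ is \emph{negative} semidefinite: it equals $-\frac{\lambda}{4s}\mathrm{Id}$ on the $x'$--block and vanishes in the $x_1$--direction, so $2A\nabla^{2}\varphi A\nabla f\cdot\nabla f\leq0$ and cannot contribute $\gtrsim(b/s)|\nabla_{x'}f|^{2}$; note also that $b$ does not enter $\nabla^{2}\varphi$ at all. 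The gradient positivity actually comes from the summand $+bs$ in $\varphi$, which your sketch never uses: it puts a $+b$ into $\partial_{t}\varphi$ and hence into $\frac{\partial_{t}G-\Delta G}{G}$, the auxiliary function $F$ is chosen so that $H:=\frac{\partial_{t}G-\Delta G}{G}-F\in[b/2,2b]$, and the gradient term is $2\left\langle D_{G}\nabla v,\nabla v\right\rangle+H\left\Vert \nabla v\right\Vert ^{2}$, where $D_{G}$ is merely bounded below by a fixed negative constant and $H\sim b$ large carries the day. Your substitute --- recovering $b|\partial_{x_{1}}f|^{2}$ by completing the square with $\|\mathcal{A}f\|^{2}$ using $(\partial_{x_{1}}\varphi)^{2}\gtrsim c^{2}$ --- does not work: $\partial_{x_{1}}\varphi=c(S^{\alpha}-s^{\alpha})/s^{\alpha}$ vanishes at $t=T$ (and $c$ may be $0$), and $\|\mathcal{A}f\|^{2}$ mixes $\partial_{t}f$ with $A\nabla\varphi\cdot\nabla f$, so no pure $|\partial_{1}f|^{2}$ can be extracted from it.

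Second, your treatment of $[\tfrac{1}{4}\left\langle A\nabla\varphi,\nabla\varphi\right\rangle,\partial_{t}]$ has the wrong sign, and with the sign you claim the absorption would fail. Its dominant piece is $-\tfrac{1}{4}\partial_{t}\bigl(a^{11}c^{2}(S^{\alpha}-s^{\alpha})^{2}/s^{2\alpha}\bigr)$, which is \emph{positive} because $(S^{\alpha}-s^{\alpha})/s^{\alpha}$ is decreasing in $s$ --- this is the paper's term $I_{3}\geq0$, and is precisely why the weight is built from $(S^{\alpha}-s^{\alpha})/s^{\alpha}$. Had it really been negative of size $c^{2}\alpha$, it could not be dominated by $cRf^{2}$: with $c\sim R^{1+\epsilon/8}$ one has $c^{2}=cR\cdot R^{\epsilon/8}\gg cR$. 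Relatedly, the negative $x'$--contribution $-\frac{\lambda|x'|^{2}}{8s^{3}}$ hiding in your $\tfrac{1}{2}\partial_{t}^{2}\varphi$, and the cross term $I_{4}$, must be absorbed by a positive $b|x'|^{2}f^{2}$ term (the paper's $I_{1}$), which again only arises from $H\sim b$ multiplying the $x'$--part of $F$; your sketch produces no such term. The pieces that do match the paper are the identification of $c\alpha(\alpha+1)S^{\alpha}x_{1}/s^{\alpha+2}$ as the source of $cRv^{2}$, the role of $a_{\infty}^{1j}=0$ in killing the cross term $a^{1j}\partial_{1}\varphi\,\partial_{j}\varphi$, and the size of the boundary terms; but as written the positivity of the quadratic form --- the crux of the lemma --- is not established.
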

We give a proof of this lemma in the Appendix. 

\begin{prop}
\label{pro:upper 1.2}Suppose that \begin{equation}
\left|u(x,t)\right|+\left|\nabla u(x,t)\right|\lesssim e^{C\left|x\right|^{2}-2^{\alpha}x_{1}^{2}}\mbox{ \,\,\,\,\,\,\, }\forall(x,t)\in\mathbb{R}_{+}^{n}\times[0,T],\label{eq: thm 2 upper}\end{equation}
Let $d=\frac{2^{\alpha+1}(T+2)}{\alpha T}$, where $\alpha$ is as
in the previous lemma. Then for any $M>0$ we have
\end{prop}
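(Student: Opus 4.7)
The approach mirrors that of Proposition \ref{pro:upper}, with the anisotropic Carleman inequality of the previous lemma in place of Lemma \ref{lem:upper 1}. I would fix $M>0$ (below whatever threshold the statement imposes, relative to $2^\alpha$) and a large $R$, then apply the Carleman inequality to $v(x,t)=u(x,t)\theta(x_1)$, where $\theta\in C_c^\infty(\mathbb{R})$ is a smooth cutoff with $\theta\equiv 1$ on $[R,MR]$ and support in $[R-1,MR+1]$. For $R$ large this keeps $\mbox{supp}(v)\subset\{R\le x_1\le R^{1+\epsilon/8}\}\times[0,T]$, as required, and
$$|Pv|\le C(|v|+|\nabla v|)+C'(|u|+|\nabla u|)\chi_E,$$
where $E=(\{R-1\le x_1\le R\}\cup\{MR\le x_1\le MR+1\})\times[0,T]$.

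With the choices $c=MR\le R^{1+\epsilon/8}$ and $b$ comparable to $\alpha$, the factors $cR/16$ and $b/16$ on the left dominate $C^2$ and absorb the $C(|v|+|\nabla v|)$ error. The three remaining contributions --- the commutator on $E$, and the boundary integrals at $t=0$ and $t=T$ --- must be estimated against the Carleman weight $G=e^{\varphi}$ using the hypothesis bound $|u|+|\nabla u|\lesssim e^{C|x|^2-2^\alpha x_1^2}$. The critical feature of $\varphi$ is the Gaussian $-\lambda|x'|^2/(8s)$ in the transverse variables, which absorbs the factor $e^{C|x'|^2}$ from the hypothesis, reducing the analysis to a one-dimensional competition in $x_1$. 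On the outer slab $\{x_1\sim MR\}$ the hypothesis decay $e^{-2^\alpha M^2R^2}$ dominates the weight growth $G\lesssim e^{cTMR+bS}$, and on the inner slab $\{x_1\sim R\}$ similarly $e^{-2^\alpha R^2}$ beats $G\lesssim e^{cTR+bS}$; the $t=0$ and $t=T$ boundary integrals are handled by the same dominance together with the explicit form of $\varphi$ at the endpoints.

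The Carleman inequality then gives
$$\int_0^{T/d}\int_{6R\le x_1\le 7R}(v^2+|\nabla v|^2)\,G\,dx\,dt\lesssim 1.$$
On this target region the dominant term of $\varphi$ is $c(S^\alpha-s^\alpha)/s^\alpha\cdot x_1$, which is at least $6MR^2\bigl((S/s)^\alpha-1\bigr)$; the specific value $d=2^{\alpha+1}(T+2)/(\alpha T)$ is precisely what is needed for $(S/s)^\alpha-1$ to remain bounded below by a positive constant throughout $[0,T/d]$, giving $G\ge e^{MR^2}$ after absorbing the $bs$ term. The pointwise conclusion then follows from this $L^2$ bound via Lemma \ref{basic L8 bound}. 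The main obstacle is the joint calibration of $c$, $b$, $\alpha$, and $d$: the hypothesis exponent $2^\alpha$ must exceed the Carleman-weight contributions by a definite margin for the commutator errors on $E$ to be genuinely negligible, and the singular behavior of $(S^\alpha-s^\alpha)/s^\alpha$ as $s\to 0^+$ must be tracked carefully to ensure the $t=0$ boundary integral does not overwhelm the main term.
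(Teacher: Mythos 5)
Your overall strategy (cut off near $x_1\sim R$ and $x_1\sim MR$, apply the anisotropic Carleman inequality with $c=MR$, estimate the commutator and boundary terms, then divide by a lower bound on the weight over a target slab) is indeed the paper's strategy, and several of your observations are on target: the transverse Gaussian $-\lambda|x'|^2/(8s)$ absorbing the $e^{C|x'|^2}$ growth, and the outer slab $x_1\sim MR$ being controlled because the hypothesis decay $e^{-2^{\alpha+1}M^2R^2}$ beats $G\le e^{2^\alpha M^2R^2+\cdots}$. Two smaller points: $v$ must be compactly supported, so the paper also cuts off in $x'$ at radius $r$ and lets $r\to\infty$, the transverse Gaussian killing that error; and your worry about the behavior of $(S^\alpha-s^\alpha)/s^\alpha$ as $s\to0^+$ is moot, since $s=t+1\ge1$ by definition.

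The genuine gap is in the final bookkeeping, which is exactly where $d$ enters. On the inner slab $\{R-1<x_1<R\}$ the weight satisfies $G\le\exp\bigl(c(S^\alpha-1)x_1+\cdots\bigr)\approx e^{2^\alpha MR^2}$ (the time factor is $(S^\alpha-s^\alpha)/s^\alpha\le S^\alpha-1\le 2^\alpha$, not $c(T-t)$ as in Lemma \ref{lem:upper 1}), so the hypothesis decay $e^{-2^{\alpha+1}x_1^2}\approx e^{-2^{\alpha+1}R^2}$ does \emph{not} dominate it once $M>2$: the inner commutator contribution is only bounded by $e^{2^\alpha MR^2}$, and this is the quantity that must be divided out. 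Consequently your target region $\{6R\le x_1\le 7R\}\times[0,T/d]$ cannot work: there $\varphi\lesssim 6MR^2\bigl((S/s)^\alpha-1\bigr)\lesssim 2^\alpha\cdot 6MR^2$ with no margin over $2^\alpha MR^2$, and in fact the role of $d$ is spatial, not temporal. The conclusion is an $L^2$ bound on $\{dR<x_1<2dR,\ |x'|<R\}\times[0,T/2]$, and $d=2^{\alpha+1}(T+2)/(\alpha T)$ is chosen so that for $s\le (T+2)/2$ one has $(S/s)^\alpha-1\ge\alpha T/(T+2)$, whence $\varphi\ge MdR^2\cdot\alpha T/(T+2)-\lambda R^2/8=(2^{\alpha+1}M-\lambda/8)R^2\ge(2^\alpha+1)MR^2$; dividing the error $e^{2^\alpha MR^2}$ by this weight produces the gain $e^{-MR^2}$. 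Without relocating the target slab out to $x_1\sim dR$, the argument does not close.
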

\[
\int_{0}^{T/2}\int_{dR<x_{1}<2dR,\left|x'\right|<R}\left(\left|u\right|^{2}+\left|\nabla u\right|^{2}\right)dxdt\lesssim e^{-MR^{2}}\]

\begin{proof}
Let \[
v(x,t)=u(x,t)\theta(x)\mbox{ where }\theta(x)=\theta_{1}(x)\theta_{2}(x),\]
 and \[
\theta_{1}(x)=\begin{cases}
\begin{array}{cc}
0 & \mbox{ if }x_{1}<R-1\mbox{ or }x_{1}>MR+1\\
1 & \mbox{ if }R<x_{1}<MR\end{array}\end{cases}\]
\[
\theta_{2}(x)=\begin{cases}
\begin{array}{cc}
1 & \mbox{ if }\left|x'\right|<r\\
0 & \mbox{ if }\left|x'\right|>r+1\end{array}\end{cases}\]
We will now apply the previous lemma with $c=MR$, to the function
$v$ and get\begin{eqnarray*}
 &  & \int_{0}^{T}\int_{\mathbb{R}_{+}^{n}}(v^{2}+\left|\nabla v\right|^{2})Gdxdt\lesssim\int_{E}\left(\left|u\right|^{2}+\left|\nabla u\right|^{2}\right)Gdxdt+\int_{\mathbb{R}_{+}^{n}}\left\Vert \nabla v(x,T)\right\Vert ^{2}Gdx\\
 &  & +\int_{\mathbb{R}_{+}^{n}}(\left|x'\right|^{2}+R^{2+\epsilon})v^{2}(x,0)G(x,0)dx+\int_{\mathbb{R}_{+}^{n}}(\left|x'\right|^{2}+R^{2+\epsilon})v^{2}(x,T)G(x,T)dx\end{eqnarray*}
where $E=\mbox{supp}\nabla\theta\times[0,T]$.

Using (\ref{eq: thm 2 upper}) and the decay of $u(\cdot,0)$, we
can check easily that the last three integrals in the right hand side
are bounded by $R^{2+\epsilon}$, and \[
\int_{\left\{ r<\left|x'\right|<r+1,R<x_{1}<MR\right\} \times[0,T]}\left(\left|u\right|^{2}+\left|\nabla u\right|^{2}\right)Gdxdt\rightarrow0\,\,\,\mbox{as }r\rightarrow\infty.\]
In $MR<x_{1}<MR+1$, $G(x,t)\leq e^{-\frac{\lambda\left|x'\right|^{2}}{8}+2^{\alpha}x_{1}^{2}+2b}$.
Hence, because of the bound (\ref{eq: thm 2 upper}) 

\[
\int_{0}^{T}\int_{MR<x_{1}<MR+1}\left(\left|u\right|^{2}+\left|\nabla u\right|^{2}\right)Gdxdt\lesssim1.\]
Furthermore,\[
\int_{0}^{T}\int_{R-1<x_{1}<R}\left(\left|u\right|^{2}+\left|\nabla u\right|^{2}\right)Gdxdt\lesssim e^{2^{\alpha}MR^{2}}.\]
Thus, we conclude that \begin{eqnarray*}
\int_{0}^{T}\int_{\mathbb{R}_{+}^{n}}\left(\left|v\right|^{2}+\left|\nabla v\right|^{2}\right)Gdxdt & \lesssim & e^{2^{\alpha}MR^{2}}.\end{eqnarray*}
As in $\{x:dR<x_{1}<2dR,\left|x'\right|<R\}$, $u=v$ and $G(x,t)\geq e^{(2^{\alpha}+1)MR^{2}}$,
it follows that\[
\int_{0}^{T/2}\int_{dR<x_{1}<2dR,\left|x'\right|<R}\left(\left|u\right|^{2}+\left|\nabla u\right|^{2}\right)dxdt\lesssim e^{-MR^{2}}.\]

\end{proof}
\begin{rem*}
Using the inequality (\ref{eq:doubling}) and a chain-of-balls argument,
we can actually take $d$ to be any positive number.
\end{rem*}

\subsection{Lower bound}

Assuming $u(\cdot,0)\not\equiv0$, the same argument as in section
2.2 gives the lower bound\[
\int_{B(Re_{1},\rho\lambda R)}\left|u(x,t)\right|^{2}dxdt\gtrsim e^{-R^{s}}\,\,\,\,\,\forall t\in[0,T],\,\,\,\forall R\gtrsim1\]
for some $T\leq1$.

For the second step, we will need another Carleman inequality. Let
$\delta\in(1,1+\epsilon/2)$ where $\epsilon$ is the constant in
the hypothesis of Theorem 1.2, and \[
S_{R,T}:=\left\{ (x,t)\in\mathbb{R}_{+}^{n}:R^{1/\delta}\leq x_{1}\leq R,T/8\leq t\leq7T/8\right\} .\]

\begin{lem}
Let $G(x,t)=e^{\varphi(x,t)}$ where \[
\varphi(x,t)=-\frac{\lambda\left|x'\right|^{2}}{8t}+E_{1}R\frac{(T^{\alpha}-t^{\alpha})}{t^{\alpha}}x_{1}+E_{2}\left(x_{1}-R\psi(t)\right)^{2}+bE_{2}t\]
where $\alpha$ and $b$ are suitable fixed positive constants, $E_{1},E_{2}\gtrsim1$
are large constants that may depend on $R$, but $E_{1}/E_{2}$ is
a large fixed constant independent of $R$. Then for large $R$, \begin{eqnarray*}
 &  & \int_{\mathbb{R}_{+}^{n+1}}v^{2}Gdxdt+\int_{\mathbb{R}_{+}^{n+1}}\left|\nabla v\right|^{2}Gdxdt\leq\int_{\mathbb{R}_{+}^{n+1}}\left|Pv\right|^{2}Gdxdt,\end{eqnarray*}
for any $v\in C_{c}^{\infty}(S_{R,T})$. Here and $\psi$ is as in
section 2.2. 
\end{lem}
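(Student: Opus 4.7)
The plan is to follow the conjugation-and-commutator approach used to prove the analogous full-space estimate (Lemma \ref{lem:lower 1}), adapted to the half-space geometry and the anisotropic weight $\varphi$. Setting $f = e^{\varphi/2} v$ for $v \in C_c^\infty(S_{R,T})$, I would write the conjugated operator $e^{\varphi/2} P e^{-\varphi/2} = S + A$, where $S$ is formally self-adjoint and $A$ is formally skew-adjoint on $L^2(\mathbb{R}_+^{n+1})$. The basic identity
\[
\int |Pv|^{2} G\,dx\,dt \;=\; \|Sf\|_{L^{2}}^{2} + \|Af\|_{L^{2}}^{2} + \bigl\langle [S,A]f,\,f\bigr\rangle
\]
then reduces the problem to showing that $\langle[S,A]f,f\rangle$ dominates $\int(v^{2}+|\nabla v|^{2})G\,dx\,dt$, modulo errors absorbable for large $R$. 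Compact support of $v$ in $\{x_1 \geq R^{1/\delta}\}$ makes all boundary contributions at $\{x_1=0\}$ vanish.

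The weight $\varphi$ is a sum of three pieces, each contributing a distinct positive commutator term. The Gaussian factor $-\lambda|x'|^{2}/(8t)$, borrowed from the Kenig--Ponce--Vega Schrödinger-type Carleman estimate of \cite{MR2273975}, produces via the heat-commutator trick a dispersive $|\nabla_{x'}f|^{2}$ gain. The time-inhomogeneous linear weight $\varphi_{2}=E_{1}R(T^{\alpha}-t^{\alpha})t^{-\alpha}x_{1}$ has $\partial_{t}\varphi_{2}\sim -\alpha E_{1}R\,T^{\alpha}t^{-\alpha-1}x_{1}$, whose interaction with $A$ yields the principal positive contribution of size $\gtrsim\alpha E_{1}Rt^{-1}\cdot x_{1}\gtrsim \alpha E_{1}R^{1+1/\delta}t^{-1}$ on $S_{R,T}$; this is the term forcing the estimate. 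The shifted quadratic $E_{2}(x_{1}-R\psi)^{2}+bE_{2}t$ contributes $2E_{2}a^{11}$ to the Hessian of $\varphi$, generating a $|\nabla f|^{2}$ gain, while the linear correction $bE_{2}t$ is engineered, exactly as in Lemma \ref{lem:lower 1}, to cancel the destabilizing $\dot{\psi}\cdot 2E_{2}(x_{1}-R\psi)$ cross term produced by the moving center $R\psi(t)$.

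The variable-coefficient error terms are treated as in Lemma \ref{lem:upper 1} and Lemma \ref{lem:lower 1}: every $a^{ij}$ is split as $a_\infty^{ij}+(a^{ij}-a_\infty^{ij})$, the simplifying normalization $a_\infty^{1j}=0$ for $j\neq 1$ removes the dangerous cross-transport term (just as in the preceding half-space upper-bound lemma), and the remainder is controlled using $|\nabla_{x}a^{ij}|\lesssim\langle x\rangle^{-1-\epsilon}$, $|\partial_{t}a^{ij}|\lesssim 1$, and $|a^{ij}(\cdot,t)-a^{ij}(\cdot,s)|\lesssim\langle x\rangle^{-1}|t-s|^{1/2}$, together with $|x|\gtrsim x_{1}\geq R^{1/\delta}$ on $S_{R,T}$. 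Because $\delta<1+\epsilon/2$, the factor $R^{-(1+\epsilon)/\delta}$ decays strictly faster than $R^{-1}$, so these perturbations are $o(1)$ relative to the main commutator terms as $R\to\infty$ and can therefore be absorbed for $R$ large.

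The main obstacle is bookkeeping the cross commutators between the three pieces of $\varphi$. The mixed contribution $\partial_{1}\varphi_{2}\cdot\partial_{1}\varphi_{3}\sim E_{1}Rt^{-1}\cdot 2E_{2}(x_{1}-R\psi)$ has indefinite sign and potential size $\sim E_{1}E_{2}R^{2}t^{-1}$; choosing $E_{1}/E_{2}$ to be a large fixed constant ensures it is dominated by the $E_{1}^{2}R^{2}t^{-2}$ part of $|\nabla\varphi_{2}|^{2}$ already present in $S$. The mixed terms between $\varphi_{1}$ and $\varphi_{2},\varphi_{3}$ vanish to leading order, since $\nabla\varphi_{1}$ points in the $x'$-direction while $\nabla\varphi_{2},\nabla\varphi_{3}$ point in the $x_{1}$-direction, and the surviving off-diagonal pieces (involving $a^{1j}$, $j\neq 1$) are again killed by $a_\infty^{1j}=0$ plus the coefficient decay. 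Choosing $\alpha$ and $b$ sufficiently large absorbs the remaining lower-order contributions coming from $\dot\psi$ and from $\partial_{t}a^{ij}$. Once positivity of the commutator is established, a standard Cauchy--Schwarz absorption in the fundamental identity yields the stated inequality.
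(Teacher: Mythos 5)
Your overall strategy --- a positive-commutator Carleman argument in which the $E_{1}$-term dominates, $E_{1}/E_{2}$ is taken large to absorb cross terms, the decay $|\nabla a^{ij}|\lesssim\langle x\rangle^{-1-\epsilon}$ together with $x_{1}\geq R^{1/\delta}$, $\delta<1+\epsilon/2$ handles the variable coefficients, and the normalization $a_{\infty}^{1j}=0$ kills the cross-transport term --- is the right one, and is essentially how the paper proceeds (the paper omits this proof, referring to the proof of the preceding half-space lemma via the identity of Lemma \ref{lem:basic 2}, with the $E_{1}R(T^{\alpha}-t^{\alpha})t^{-\alpha}x_{1}$ term now dominating, exactly as Lemma \ref{lem:lower 1} modifies Lemma \ref{lem:upper 1}). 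However, you misidentify the mechanisms by which two of the three pieces of $\varphi$ enter, and one of these misidentifications leaves a genuine gap.

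First, the Gaussian factor $-\lambda|x'|^{2}/(8t)$ does \emph{not} produce a ``dispersive $|\nabla_{x'}f|^{2}$ gain'': its Hessian on the $x'$-block is $-\frac{\lambda}{4t}\mathrm{Id}$, i.e.\ \emph{negative}, so it contributes a loss of order $t^{-1}|\nabla_{x'}f|^{2}$ to the gradient term. What it actually buys is a positive \emph{zeroth-order} term $\gtrsim bE_{2}|x'|^{2}$ in the commutator (the term $I_{1}$ in the paper's proof of the companion lemma), coming from pairing the time derivative with the $t^{-1}|x'|^{2}$ weight. This term is indispensable here: $S_{R,T}$ is unbounded in $x'$, and the error terms you propose to control by coefficient decay alone --- schematically $\langle A\nabla(F-F_{0}),\nabla\log G\rangle\sim\langle x\rangle^{-1-\epsilon}(|x'|+E_{1}R)^{3}$ and the $a^{1j}$ cross terms $\sim E_{1}R\,|x'|\langle x\rangle^{-\epsilon}$ --- still grow like $|x'|^{2-\epsilon}$ and cannot be absorbed by the $x'$-independent main terms $\sim\alpha E_{1}^{2}R^{2}$; only the $b|x'|^{2}$ gain closes the estimate. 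Second, $bE_{2}t$ cannot ``cancel'' the moving-center term $2E_{2}R\dot{\psi}(t)(x_{1}-R\psi(t))$: its time derivative is the constant $bE_{2}$, while that cross term has size $E_{2}R^{2}/T$ on $S_{R,T}$; moreover Lemma \ref{lem:lower 1}, which you cite as the model for this cancellation, contains no such term --- there (and here) the $\dot{\psi}$ contribution is simply dominated by the $E_{1}^{2}R^{2}$ main term using $E_{1}\gg E_{2}$. The actual role of $bE_{2}t$ is to make $H=\frac{\partial_{t}G-\Delta G}{G}-F\approx bE_{2}$ positive and large enough that $2\langle D_{G}\nabla v,\nabla v\rangle+H\|\nabla v\|^{2}\gtrsim E_{2}|\nabla v|^{2}$ despite the negative $x'$-Hessian just described. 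Until the $|x'|^{2}$ zeroth-order gain and the compensation of that negative Hessian are put in place, the proposed argument does not close.
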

We will omit the proof of this lemma as it is almost the same as that
of Lemma 4.1, except for the important fact that $E_{1}R\frac{(T^{\alpha}-t^{\alpha})}{t^{\alpha}}x_{1}$
is now the dominating term. (This is similar to the relationship between
Lemma \ref{lem:upper 1} and Lemma \ref{lem:lower 1})

\begin{prop}
\label{pro:iteration 2}Let $u$ and $P$ be as in Theorem 1.2. Suppose
that for some $s\geq2$, there are constants $R_{s},C_{s}>0$ such
that\begin{eqnarray*}
\int_{T/4}^{3T/4}\int_{\begin{array}{c}
R\leq x_{1}\leq2R\\
\left|x'\right|\leq C_{s}R^{s/2}\end{array}}(u^{2}+\left|\nabla u\right|^{2})dxdt & \gtrsim & \exp(-C_{s}R^{s})\end{eqnarray*}
 for all $R\geq R_{s}$. Let $s_{1}=\max\left\{ 2,\frac{s-1}{\delta}+1\right\} $
for some $1<\delta<1+\frac{\epsilon}{2}$. Then there is $R_{s_{1}},C_{s_{1}}$
such that 

\[
\int_{0}^{T}\int_{\begin{array}{c}
R\leq x_{1}\leq2R\\
\left|x'\right|\leq C_{s_{1}}R^{s_{1}/2}\end{array}}(u^{2}+\left|\nabla u\right|^{2})dxdt\gtrsim\exp(-C_{s_{1}}R^{s_{1}})\]
for all $R\geq R_{s_{1}}$.
\end{prop}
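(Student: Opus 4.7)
My plan is to adapt the proof of Proposition \ref{pro:iteration} to the anisotropic setting, with the preceding lemma's Carleman inequality replacing Lemma \ref{lem:lower 1}. The new ingredient is an additional cutoff in $x'$, dictated by the Gaussian factor $e^{-\lambda|x'|^{2}/(8t)}$ in $G$.

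I would set $v=u\theta$ with $\theta(x,t)=\theta_{1}(x_{1})\,\theta_{2}(x_{1}-R\psi(t))\,\theta_{3}(x')$, where $\theta_{1}$ is supported in $[R^{1/\delta},cR]$ (and $=1$ on $[R^{1/\delta}+1,cR-1]$) with $c=2^{-11}$, $\theta_{2}$ is supported in $\{y\geq 2R\}$ (and $=1$ on $\{y\geq 3R\}$), and $\theta_{3}$ is supported in $\{|x'|\leq C_{s}R^{s/(2\delta)}\}$ (and $=1$ on half that set). Then $\mbox{supp}\,v\subset S_{R,T}$ for large $R$, and $|Pv|\leq C(|v|+|\nabla v|)+C'(|u|+|\nabla u|)\chi_{E}$ with $E=\mbox{supp}\,\nabla\theta$. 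Applying the Carleman inequality with $E_{1}\sim\max\{1,R^{(s-1)/\delta-1}\}$ and $E_{1}/E_{2}$ a large fixed ratio absorbs the $C(|v|+|\nabla v|)$ term and gives
\[
\int_{0}^{T}\!\!\int_{\mathbb{R}_{+}^{n}}(v^{2}+|\nabla v|^{2})G\,dx\,dt\lesssim\int_{E}(u^{2}+|\nabla u|^{2})G\,dx\,dt.
\]

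The set $E$ splits into four pieces: (i) the inner shell $\{x_{1}\sim R^{1/\delta}\}$, controlled via the preliminary decay \eqref{eq:thm 2 upper 1} and the a priori bound; (ii) the shift boundary $\{2R\leq x_{1}-R\psi(t)\leq 3R\}\cap\{x_{1}\leq cR\}$, on which $\varphi\lesssim E_{2}R^{2}$; (iii) the outer shell $\{x_{1}\sim cR\}$, from which the lower bound is extracted; and (iv) the new transverse boundary $\{|x'|\sim C_{s}R^{s/(2\delta)}\}$, on which the Gaussian factor contributes $\exp\bigl(-\lambda C_{s}^{2}R^{s/\delta}/(8T)\bigr)$, which overcomes the exponential growth of $u$ provided $C_{s}$ is large. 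I would then restrict the LHS to the sweet spot $\{16R^{1/\delta}\leq x_{1}\leq 32R^{1/\delta},\,|x'|\leq \tfrac{1}{2}C_{s}R^{s/(2\delta)},\,T/4\leq t\leq 3T/4\}$ and apply the inductive hypothesis at scale $R^{1/\delta}$, producing a factor $\exp(-C_{s}R^{s/\delta})$. Comparing the minimum of $G$ on the sweet spot with its maximum on the outer shell and summing the error contributions (i)--(iv) yields
\[
\int_{0}^{T}\!\!\int_{\begin{array}{c} cR-1\leq x_{1}\leq cR \\ |x'|\leq C_{s}R^{s/(2\delta)} \end{array}}(u^{2}+|\nabla u|^{2})\,dx\,dt\geq\exp(-C_{s_{1}}R^{s_{1}}).
\]
Since $s/(2\delta)<s_{1}/2$ (from $s_{1}\geq(s-1)/\delta+1$ and $\delta>1$), the transverse cutoff $C_{s}R^{s/(2\delta)}$ fits inside $C_{s_{1}}(cR)^{s_{1}/2}$ for large $R$; relabeling $cR\mapsto R$ and enlarging $C_{s_{1}}$ finishes the proof.

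The main obstacle, beyond the faithful adaptation of Proposition \ref{pro:iteration}, is verifying the compatibility of the new transverse cutoff with the anisotropic weight $G$: the error on piece (iv) must be swallowed by the Gaussian decay of $G$ there, and on the sweet spot the same Gaussian must reduce $G$ only by a benign factor so that the inductive lower bound survives. Both follow by taking $C_{s}$ sufficiently large relative to the a priori constant $C$ and to $T$, a choice compatible with the induction since $C_{s_{1}}$ is free to be enlarged in the conclusion.
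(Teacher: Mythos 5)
Your overall architecture matches the paper's: the same three-fold cutoff (in $x_{1}$, in the shifted variable $x_{1}-R\psi(t)$, and in $x'$), the same Carleman weight, the same four-piece decomposition of $\mathrm{supp}\,\nabla\theta$, and the same sweet-spot comparison. But the scale you assign to the transverse cutoff $\theta_{3}$ is wrong, and this breaks the estimate on your piece (iv). You place the boundary of $\theta_{3}$ at $|x'|\sim C_{s}R^{s/(2\delta)}$ and argue that the Gaussian factor $\exp(-\lambda|x'|^{2}/(8t))$ there, of size $\exp(-cR^{s/\delta})$ (note $t\geq T/8$ on $S_{R,T}$, so $1/t$ is bounded), beats the growth of $u$. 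That is not the only thing it must beat: on piece (iv) the remaining part of the weight, $E_{1}R\frac{T^{\alpha}-t^{\alpha}}{t^{\alpha}}x_{1}+E_{2}(x_{1}-R\psi(t))^{2}$, is itself as large as a constant times $E_{2}R^{2}$ (indeed at least $4E_{2}R^{2}$, since $\theta_{2}\neq0$ forces $|x_{1}-R\psi(t)|\geq2R$), and with the forced choice $E_{1}\sim\max\{1,R^{\frac{s-1}{\delta}-1}\}$ and $E_{1}/E_{2}$ fixed one has $E_{2}R^{2}\sim R^{s_{1}}$. Since $s/\delta<s_{1}$ for every $s\geq2$ and $\delta>1$ (this is exactly the inequality $s/(2\delta)<s_{1}/2$ you yourself invoke at the end), the Gaussian gain $\exp(-cR^{s/\delta})$ is negligible compared with $\exp(E_{2}R^{2})=\exp(R^{s_{1}})$, so the contribution of piece (iv) is of the same order as $\Sigma$ and cannot be absorbed, no matter how large $C_{s}$ is taken. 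The cutoff must instead be placed at $|x'|\sim C_{s_{1}}R^{s_{1}/2}$ — the scale of the \emph{conclusion}, not of the hypothesis — so that $\lambda|x'|^{2}/8\gtrsim C_{s_{1}}^{2}R^{s_{1}}$ dominates both $E_{1}R^{2}+E_{2}R^{2}\sim R^{s_{1}}$ and $C|x|^{2}$ once $C_{s_{1}}$ is large; this is precisely what the paper does.

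A secondary instance of the same miscalibration: your sweet spot is $\{16R^{1/\delta}\leq x_{1}\leq32R^{1/\delta},\ |x'|\leq\frac{1}{2}C_{s}R^{s/(2\delta)}\}$, but the inductive hypothesis at scale $R'=16R^{1/\delta}$ only provides a lower bound over $|x'|\leq C_{s}(R')^{s/2}=C_{s}4^{s}R^{s/(2\delta)}$, a strictly larger transverse region; the guaranteed mass could live entirely where your $\theta_{3}$ vanishes. Both problems disappear once $\theta_{3}$ equals $1$ out to $C_{s_{1}}R^{s_{1}/2}$ (which contains the inductive region since $s/(2\delta)<s_{1}/2$), and the rest of your argument then goes through as in Proposition \ref{pro:iteration}.
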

\begin{proof}
Let \[
v(x,t)=u(x,t)\theta(x,t)\mbox{ where }\theta(x,t)=\theta_{1}(x)\theta_{2}(x_{1}-R\psi(t))\theta_{3}(x'),\]
 where $\psi$ is defined as before, and \[
\theta_{1}(x)=\begin{cases}
\begin{array}{cc}
0 & \mbox{ if }x_{1}<R^{1/\delta}\mbox{ or }x_{1}>cR\\
1 & \mbox{ if }R^{1/\delta}+1\leq x_{1}\leq cR-1\end{array}\end{cases}\]
\[
\theta_{2}(r)=\begin{cases}
\begin{array}{cc}
0 & \mbox{ if }r<2R\\
1 & \mbox{ if }r>3R,\end{array}\end{cases}\]

\[
\theta_{3}(x')=\begin{cases}
\begin{array}{cc}
0 & \mbox{ if }\left|x'\right|>C_{s_{1}}R^{s_{1}/2}+1\\
1 & \mbox{ if }\left|x'\right|<C_{s_{1}}R^{s_{1}/2},\end{array}\end{cases}\]
where $c$ and $C_{s_{1}}$ are positive constants to be choosen.
It is clear that $\mbox{supp}(v)\subset S_{R,T}$.

Applying the previous Carleman inequality to $v$, as before we get\[
\int_{\mathbb{R}_{+}^{n+1}}(v^{2}+\left|\nabla v\right|^{2})Gdxdt\lesssim\int_{E}\left(\left|u\right|^{2}+\left|\nabla u\right|^{2}\right)Gdxdt,\]
where $E=\mbox{supp}\nabla\theta$. 

Because in the set $\{x:10^{\alpha}R^{1/\delta}\leq x_{1}\leq2\cdot10^{\alpha}R^{1/\delta},\left|x'\right|\leq C_{s}(10^{\alpha}R^{1/\delta})^{s/2}\}\times[\frac{T}{4},\frac{3T}{4}]$,\begin{eqnarray*}
G(x,t) & \geq & \exp\left(-D_{s}^{'}R^{s/\delta}+10^{\alpha}E_{1}R^{1+\frac{1}{\delta}}+E_{2}\left(4R-2\cdot10^{\alpha}R^{1/\delta}\right)^{2}\right)\end{eqnarray*}
we have, \begin{eqnarray*}
\int_{\mathbb{R}_{+}^{n+1}}(\left|v\right|^{2}+\left|\nabla v\right|^{2})Gdxdt & \gtrsim & \exp\left(10^{\alpha}E_{1}R^{1+\frac{1}{\delta}}+E_{2}\left(4R-2\cdot10^{\alpha}R^{1/\delta}\right)^{2}-C_{s}R^{s/\delta}-D_{s}^{'}R^{s/\delta}\right)\\
 & \gtrsim & \exp\left(9^{\alpha}E_{1}R^{1+\frac{1}{\delta}}+16E_{2}R^{2}\right)=:\Sigma\end{eqnarray*}
if $E_{1}/R^{\frac{s-1}{\delta}-1}$ and $E_{1}/E_{2}$ are large
enough.

In $\{R^{1/\delta}\leq x_{1}\leq R^{1/\delta}+1\}$, \[
G(x,t)\leq\exp\left(-\frac{\lambda\left|x'\right|^{2}}{8}+8^{\alpha}E_{1}R^{1+\frac{1}{\delta}}+16E_{2}R^{2}\right)\]
so using the bound (\ref{eq: thm 2 upper}) we get \[
\int_{0}^{T}\int_{R^{1/\delta}\leq x_{1}\leq2R^{1/\delta}}\left(\left|u\right|^{2}+\left|\nabla u\right|^{2}\right)Gdxdt\lesssim\exp\left(8^{\alpha}E_{1}R^{1+\frac{1}{\delta}}+16E_{2}R^{2}\right)\ll\Sigma.\]
In $\{2R\leq\left|x_{1}-R\psi(t)\right|\leq3R\}\cap\{x_{1}\leq cR\}$,
\[
G(x,t)\leq\exp\left(-\frac{\lambda\left|x'\right|^{2}}{8}+c8^{\alpha}E_{1}R^{2}+9E_{2}R^{2}\right)\]
Hence, if $c$ is chosen to be small enough, \[
\int_{0}^{T}\int_{2R\leq\left|x_{1}-R\psi(t)\right|\leq3R,x_{1}\leq cR}\left(\left|u\right|^{2}+\left|\nabla u\right|^{2}\right)Gdxdt\lesssim\exp\left(10E_{2}R^{2}\right)\ll\Sigma.\]
In $\{C_{s_{1}}R^{s_{1}/2}\leq\left|x'\right|\leq C_{s_{1}}R^{s_{1}/2}+1\}$,
\[
G(x,t)\leq\exp(-\lambda C_{s_{1}}^{2}R^{s_{1}}/8+c8^{\alpha}E_{1}R^{2}+16E_{2}R^{2})\]
Note that by our choice of $E_{1}$ and $E_{2}$, $E_{1}R^{2}\sim E_{2}R^{2}\sim R^{s_{1}}$,
so if we choose $C_{s_{1}}$ big enough, \[
\int_{0}^{T}\int_{x_{1}<cR,C_{s_{1}}R^{s_{1}/2}\leq\left|x'\right|\leq C_{s_{1}}R^{s_{1}/2}+1}\left(\left|u\right|^{2}+\left|\nabla u\right|^{2}\right)Gdxdt\lesssim1.\]
Thus, we conclude that \[
\Sigma\lesssim\int_{0}^{T}\int_{cR<x_{1}<cR+1,\left|x'\right|\leq C_{s_{1}}R^{s_{1}/2}}\left(\left|u\right|^{2}+\left|\nabla u\right|^{2}\right)Gdxdt.\]
Since in $\{cR<x_{1}<cR+1,\left|x'\right|\leq C_{s_{1}}R^{s_{1}/2}\}$,
$G\leq\exp(KR^{s_{1}})$, we obtain \[
\int_{0}^{T}\int_{\begin{array}{c}
cR\leq x_{1}\leq cR+1\\
\left|x'\right|\leq C_{s_{1}}R^{s_{1}/2}\end{array}}(\left|u\right|^{2}+\left|\nabla u\right|^{2})dxdt\geq\exp(-KR^{s_{1}}).\]
The proposition follows immediately from this.
\end{proof}
\begin{prop}
\label{pro:lower 1.2}Let $u$ and $P$ be as in Theorem 1.2. If $u(\cdot,0)\not\equiv0$
then then for any $T\leq1$, there exist $C_{2}=C_{2}(T,u)>0$ such
that 

\[
\int_{0}^{T}\int_{\begin{array}{c}
R\leq x_{1}\leq2R\\
\left|x'\right|\leq C_{2}R\end{array}}(u^{2}+\left|\nabla u\right|^{2})dxdt\gtrsim\exp(-C_{2}R^{2})\]
for all $R\geq C_{2}$.
\end{prop}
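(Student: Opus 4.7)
The plan is to mirror exactly the proof of Proposition~\ref{pro:lower}, replacing Proposition~\ref{pro:iteration} by its half-space counterpart Proposition~\ref{pro:iteration 2} and carrying along the lateral cutoff $|x'|\le C_{s_k}R^{s_k/2}$ through the induction. The starting point is the weak lower bound from the beginning of Section 3.2,
\[
\int_{B(Re_1,\rho\lambda R)}u^2(x,t)\,dx\gtrsim e^{-R^{s}},\qquad t\in[0,T],\ R\gtrsim 1,
\]
valid for some exponent $s$ and some $T\le 1$; replacing $s$ by $\max(s,2)$ if necessary, we may assume $s\ge 2$. Since $B(Re_1,\rho\lambda R)$ is contained in the slab $\{R(1-\rho\lambda)\le x_1\le R(1+\rho\lambda),\,|x'|\le\rho\lambda R\}$, an innocuous dilation in $R$ and an integration over the central subinterval of $[0,T]$ put this estimate into the exact form required as input to Proposition~\ref{pro:iteration 2}, with $s_0:=s$ and some absolute constants $C_{s_0},R_{s_0}$.

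Next, define $s_{k+1}:=\max\{2,(s_k-1)/\delta+1\}$ for $\delta\in(1,1+\epsilon/2)$. Because the affine map $x\mapsto(x-1)/\delta+1$ has unique fixed point $1$ and strictly contracts toward it, the sequence $(s_k)$ is non-increasing and there is a finite $k_0$ with $s_k=2$ for all $k\ge k_0$. Introduce nested time intervals $[a_k,b_k]$ with $a_k=T(\tfrac12-2^{k-k_0-1})$ and $b_k=T(\tfrac12+2^{k-k_0-1})$, arranged so that $[a_{k-1},b_{k-1}]$ is always the central half of $[a_k,b_k]$, $[a_0,b_0]\subset[T/4,3T/4]$, and $[a_{k_0},b_{k_0}]=[0,T]$.

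Then apply Proposition~\ref{pro:iteration 2} inductively with the pair $([a_k,b_k],[a_{k+1},b_{k+1}])$ playing the role of $([T/4,3T/4],[0,T])$; this requires only redefining the cutoff $\psi$ used in the proof of that proposition so that $\psi\equiv 4$ on $[a_k,b_k]$ and $\mbox{supp}\,\psi\subset(a_{k+1},b_{k+1})$, which modifies only the implicit constants. Each step advances the bound from exponent $s_k$ with lateral cutoff $|x'|\le C_{s_k}R^{s_k/2}$ to exponent $s_{k+1}$ with lateral cutoff $|x'|\le C_{s_{k+1}}R^{s_{k+1}/2}$. After $k_0$ iterations we reach $[0,T]$ with $s_{k_0}=2$ and lateral cutoff $|x'|\le C_2 R$, which is precisely the claim.

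The main obstacle is the book-keeping of constants across the $k_0$ iterations: one must check that the $C_{s_k},R_{s_k}$ produced at each step depend in a controlled way on their predecessors and on $T$, so that after $k_0$ applications the final $C_2$ still depends only on $T$ and $u$, and one must check that the lateral cutoff $\theta_3$ of width $C_{s_k}R^{s_k/2}$ inherited from step $k$ nests inside the new cutoff built at step $k+1$. Since $(s_k)$ is non-increasing and $k_0$ is finite, both issues reduce to a routine chase of constants; the essential new analytical content has already been packaged in Proposition~\ref{pro:iteration 2}, and the present proposition is just its iterated consequence.
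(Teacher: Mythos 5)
Your proposal is correct and is essentially the argument the paper intends: the paper itself proves this proposition by the one-line remark that it follows from iterating Proposition~\ref{pro:iteration 2} exactly as Proposition~\ref{pro:lower} follows from Proposition~\ref{pro:iteration}, with the same sequence $s_{k}$ and the same nested time intervals $[a_{k},b_{k}]$. Your additional observations (converting the ball-based weak lower bound into the slab form, and tracking the shrinking lateral cutoff $|x'|\le C_{s_k}R^{s_k/2}$ through the finitely many iterations) are precisely the omitted book-keeping.
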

\begin{proof}
The proof is similar to that of Proposition \ref{pro:lower}, using
Proposition \ref{pro:iteration 2} instead of Proposition  \ref{pro:iteration}.
We omit the details.
\end{proof}
Using Proposition \ref{pro:upper 1.2} (see also the remark after
it) and Proposition \ref{pro:lower 1.2}, the proof of Theorem 1.2
is identical to that of Theorem 1.1. We omit the details.

\section{appendix}

\subsection{Some auxiliary lemmas}

The first lemma is a standard estimate for solutions of parabolic
inequalities, we refer to \cite{MR001}. 

\begin{lem}
\label{basic L8 bound}Suppose that in $\Omega^{*}:=B(0,2)\times[0,2R^{-2}]$
, the following inequality holds \[
\left|Pv\right|\leq R^{2}\left|v\right|+R\left|\nabla v\right|.\]
Then \[
\left\Vert v\right\Vert _{L^{\infty}(\Omega)}+\left\Vert \nabla v\right\Vert _{L^{\infty}(\Omega)}\leq C_{n}R^{c}\left\Vert v\right\Vert _{L^{2}(\Omega^{*})}\]
 where $\Omega=B(0,1)\times[0,R^{-2}]$ and $c$ is a constant depending
only on $n$. 
\end{lem}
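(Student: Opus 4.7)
The plan is to reduce this lemma to a classical interior parabolic $L^{2}$-to-$L^{\infty}$ estimate via a parabolic rescaling that absorbs all powers of $R$ into the change of variables, so that the rescaled inequality has order-one constants.

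First, I would set $\tilde v(y,s):=v(R^{-1}y,R^{-2}s)$ and $\tilde a^{ij}(y,s):=a^{ij}(R^{-1}y,R^{-2}s)$. A direct computation gives $\tilde P\tilde v=R^{-2}(Pv)(R^{-1}y,R^{-2}s)$, where $\tilde P=\partial_{s}+\operatorname{div}_{y}(\tilde A\nabla_{y})$ is again backward parabolic with ellipticity constant $\lambda$. Combined with $|\nabla_{y}\tilde v|=R^{-1}|\nabla_{x}v|$, the hypothesis $|Pv|\le R^{2}|v|+R|\nabla v|$ on $\Omega^{*}$ transforms into
\[
|\tilde P\tilde v|\le|\tilde v|+|\nabla_{y}\tilde v|\quad\text{on }B(0,2R)\times[0,2],
\]
and the rescaled coefficients have (in fact better than) uniformly bounded $\nabla_{y}$ and $\partial_{s}$ derivatives. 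Thus after rescaling the parabolic inequality has only order-one constants, independent of $R$.

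Next, I would invoke the classical interior parabolic regularity theory. Since $\tilde P$ is backward parabolic, local $L^{\infty}$ bounds propagate from forward-time cylinders. For any $(y_{0},s_{0})\in B(0,R)\times[0,1]$, the cylinder $Q(y_{0},s_{0}):=B(y_{0},1)\times[s_{0},s_{0}+1]$ is contained in $B(0,2R)\times[0,2]$ (using $R\ge1$). The standard Moser / Aronson--Serrin interior estimate, applied with the first-order term $|\nabla_{y}\tilde v|$ absorbed in the Caccioppoli step via Cauchy's inequality, gives $|\tilde v(y_{0},s_{0})|+|\nabla_{y}\tilde v(y_{0},s_{0})|\le C_{n,\lambda}\|\tilde v\|_{L^{2}(Q(y_{0},s_{0}))}$. (The gradient bound follows either by iterating the $L^{\infty}$ estimate on the differentiated equation, or from parabolic $W^{2,p}$ regularity combined with Sobolev embedding.) Taking the supremum over $(y_{0},s_{0})\in B(0,R)\times[0,1]$ and enlarging the right-hand side to the full cylinder yields
\[
\|\tilde v\|_{L^{\infty}(B(0,R)\times[0,1])}+\|\nabla_{y}\tilde v\|_{L^{\infty}(B(0,R)\times[0,1])}\le C_{n,\lambda}\|\tilde v\|_{L^{2}(B(0,2R)\times[0,2])}.
\]

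Finally, I would undo the rescaling. The change of variables produces $\|\tilde v\|_{L^{\infty}(B(0,R)\times[0,1])}=\|v\|_{L^{\infty}(\Omega)}$, $\|\nabla_{y}\tilde v\|_{L^{\infty}}=R^{-1}\|\nabla v\|_{L^{\infty}(\Omega)}$, and $\|\tilde v\|_{L^{2}(B(0,2R)\times[0,2])}^{2}=R^{n+2}\|v\|_{L^{2}(\Omega^{*})}^{2}$. Substituting gives $\|v\|_{L^{\infty}(\Omega)}+\|\nabla v\|_{L^{\infty}(\Omega)}\lesssim R\cdot R^{(n+2)/2}\|v\|_{L^{2}(\Omega^{*})}$, proving the lemma with $c=(n+4)/2$. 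The main (and really only) obstacle is justifying the classical interior $L^{2}$-to-$L^{\infty}$ estimate when the right side of the parabolic inequality contains a bounded first-order term rather than just a zero-order one; this is standard after absorbing $|\nabla_{y}\tilde v|$ at the Caccioppoli step, and all constants in that estimate depend only on $n$ and $\lambda$, the entire $R$-dependence being accounted for by the Jacobian of the rescaling.
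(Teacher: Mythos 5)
Your argument is correct. The paper does not actually prove this lemma --- it is stated as ``a standard estimate for solutions of parabolic inequalities'' with a citation --- and your parabolic rescaling to unit scale followed by the interior $L^{2}$-to-$L^{\infty}$ and gradient estimates (with the first-order term on the right absorbed at the Caccioppoli stage, and the gradient bound supplied by $W^{2,p}$/Schauder theory, which applies since the rescaled coefficients have gradients of size $O(R^{-1})$) is exactly the standard argument being invoked; the bookkeeping of the Jacobian factors giving $c=(n+4)/2$ is also right. The only cosmetic discrepancy is that your constant depends on $\lambda$ as well as $n$, whereas the lemma writes $C_{n}$; since $\lambda$ is fixed throughout the paper this is immaterial.
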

The next two lemmas are from \cite{MR2198840} (see also \cite{MR2231129}). 

\begin{lem}
\label{Lemma}For $\rho\in(0,1/2)$, there is constant $N_{\rho}>0$
such that if \[
\left|Pv\right|\leq R^{2}\left|v\right|+R\left|\nabla v\right|\]
 in $\Omega^{*}:=B(0,2)\times[0,2R^{-2}]$ then \[
\int_{B(0,\rho)}v^{2}(x,0)dx\leq N_{\rho}\int_{B(0,2\rho)}v^{2}(x,t)dx\]
for all \[
0\leq t\leq N_{\rho}^{-1}\min\left\{ R^{-2},1/\log\left(\frac{N_{\rho}\int_{\Omega^{*}}v^{2}(x,0)dxdt}{\int_{B(0,\rho)}v^{2}(x,0)dx}\right)\right\} .\]

\end{lem}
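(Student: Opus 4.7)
My plan is to prove the lemma by a weighted energy estimate with a forward heat kernel, in the spirit of the frequency-function / log-convexity arguments of Escauriaza, Fern\'andez and Vessella. Let $G_a(x,t)=(t+a)^{-n/2}\exp(-|x|^2/(4(t+a)))$ and let $\phi\in C_c^\infty(B(0,2\rho))$ with $\phi\equiv1$ on $B(0,3\rho/2)$. Define
$$H(t)=\int \phi^2(x)\,v^2(x,t)\,G_a(x,t)\,dx.$$
The parameter $a$ will be chosen small and adaptive (depending on $\rho$ and on the ratio $L:=\|v\|_{L^2(\Omega^*)}^2/\int_{B(0,\rho)}v^2(x,0)\,dx$), so as to simultaneously guarantee that $H(0)$ controls $\int_{B(0,\rho)}v^2(x,0)\,dx$ from below up to constants depending only on $\rho$, and that $G_a$ on the annulus $\{3\rho/2\le|x|\le 2\rho\}$ is small enough to suppress the cutoff error in the evolution of $H$.

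The core computation is the differential inequality for $H$. Differentiating and using both $\partial_t v=Pv-\operatorname{div}(A\nabla v)$ and the identity $\partial_t G_a=\Delta G_a$, then integrating by parts, one obtains
$$H'(t)=2\int \phi^2(A\nabla v)\cdot\nabla v\,G_a\,dx-\int \phi^2 v^2\operatorname{div}((A-I)\nabla G_a)\,dx+2\int \phi^2 v\,Pv\,G_a\,dx+\mathcal{R}(t),$$
where $\mathcal{R}(t)$ collects the commutator terms with $\phi^2$, supported on the annulus. The first term on the right is $\ge 2\lambda\int \phi^2|\nabla v|^2 G_a\,dx$; the second, using $|A-I|\lesssim|x|+t^{1/2}$ together with the explicit expressions $\nabla G_a=-xG_a/(2(t+a))$ and $D^2 G_a$, is either absorbed by the gradient term via Cauchy--Schwarz or bounded by $C_\rho H(t)$ (since $|x|\le 2\rho$ on the support of $\phi$); the third is handled via $|Pv|\le R^2|v|+R|\nabla v|$, contributing $\lesssim R^2 H$ plus an absorbable gradient piece; and $\mathcal{R}(t)$, using the annular Gaussian decay of $G_a$ together with the $L^\infty$-from-$L^2$ bound of Lemma~\ref{basic L8 bound}, is bounded by $C_\rho R^{2c}\|v\|_{L^2(\Omega^*)}^2(t+a)^{-n/2-1}e^{-\rho^2/(2(t+a))}$. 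Altogether,
$$H'(t)\ge -C_\rho(R^2+1)\,H(t)-C_\rho R^{2c}\|v\|_{L^2(\Omega^*)}^2\,(t+a)^{-n/2-1}\,e^{-\rho^2/(2(t+a))}.$$

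Gronwall's inequality then gives $H(t)\ge H(0)e^{-C_\rho(R^2+1)t}-\mathrm{err}(t)$; requiring $\mathrm{err}(t)\le\tfrac12 H(0)e^{-C_\rho(R^2+1)t}$ forces the two constraints $R^2 t\lesssim 1$ (from the Gronwall exponential) and $\rho^2/(t+a)\gtrsim \log L$ (from the Gaussian factor), which with the adaptive choice of $a$ translate exactly into the two terms in the minimum defining the allowed range of $t$ in the statement. The comparisons $H(0)\gtrsim \int_{B(0,\rho)}v^2(x,0)\,dx$ and $H(t)\lesssim \int_{B(0,2\rho)}v^2(x,t)\,dx$ then deliver the conclusion.

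The main obstacle is the coupled choice of $a$: decreasing $a$ enhances the Gaussian suppression of the cutoff error $\mathcal{R}(t)$ and so extends the valid range of $t$, but it also shrinks the effective support of $G_a(\cdot,0)$ and weakens the lower bound for $H(0)$ in terms of $\int_{B(0,\rho)}v^2(x,0)\,dx$. Balancing these two opposing requirements is what produces the logarithmic dependence on $L$ in the allowed range of $t$, and the careful tracking of the $a$-dependence in the constants on both sides is the most delicate step. A secondary obstacle is the variable-coefficient mismatch $-\int \phi^2 v^2\operatorname{div}((A-I)\nabla G_a)\,dx$, which lacks annular Gaussian suppression but is rendered absorbable by the Lipschitz hypothesis on $A$ implicit in the context, exactly as in the proof of Lemma~\ref{1st Carleman}.
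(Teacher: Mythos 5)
First, a structural point: the paper does not prove this lemma at all --- it is quoted from \cite{MR2198840} (see the sentence preceding it in the appendix) --- so there is no in-paper argument to compare yours against, and your proposal must be judged on its own terms. Judged so, it has a genuine quantitative gap in the last step, where you invoke ``the comparisons $H(0)\gtrsim\int_{B(0,\rho)}v^2(x,0)\,dx$ and $H(t)\lesssim\int_{B(0,2\rho)}v^2(x,t)\,dx$.'' With the origin-centered weight $G_a(x,t)=(t+a)^{-n/2}e^{-|x|^2/4(t+a)}$, the only available lower bound is $H(0)\ge a^{-n/2}e^{-\rho^2/4a}\int_{B(0,\rho)}v^2(x,0)\,dx$, while $H(t)\le (t+a)^{-n/2}\int_{B(0,2\rho)}v^2(x,t)\,dx$. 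The powers of $a$ and $t+a$ cancel harmlessly once $t\lesssim a$, but the factor $e^{-\rho^2/4a}$ does not: your own error analysis forces $\rho^2/(t+a)\gtrsim\log L$, hence $a\lesssim \rho^2/\log L$, hence $e^{-\rho^2/4a}\le L^{-c}$. What the scheme actually delivers is $\int_{B(0,\rho)}v^2(x,0)\,dx\le N_\rho\,L^{c}\int_{B(0,2\rho)}v^2(x,t)\,dx$, with a constant blowing up in the ratio $L$, not the fixed $N_\rho$ of the statement. The tension you describe as ``the most delicate step'' is not merely delicate; with this weight it cannot be balanced. To get a constant $N_\rho$ one needs $a\gtrsim_\rho 1$, and then the Gaussian gives no suppression on the annulus carrying $\nabla\phi$, so one only reaches the time range $t\lesssim I_0/K$ (linear in the ratio), which is strictly weaker than the logarithmic range claimed.

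The fix is to decouple the two roles you are asking the weight to play. One option is to keep your energy scheme verbatim but replace $e^{-|x|^2/4(t+a)}$ by a weight that is flat on $B(0,\rho)$, for instance $(t+a)^{-n/2}\exp\bigl(-(|x|-\rho)_+^2/4(t+a)\bigr)$: then $H(0)\ge a^{-n/2}\int_{B(0,\rho)}v^2(x,0)\,dx$ with no exponential loss, the weight still decays like $e^{-\rho^2/16(t+a)}$ on $\{|x|\ge 3\rho/2\}$, and the defect $\partial_t G-\Delta G\ge -\tfrac{n}{2(t+a)}G$ only costs a factor $(a/(t+a))^{n/2}\ge 2^{-n/2}$ for $t\le a$. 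Equivalently, in the model case one can read the statement off the representation $v(\cdot,0)=e^{t\Delta}v(\cdot,t)+\text{Duhamel}$, splitting $v(\cdot,t)$ into its restriction to $B(0,2\rho)$ (mapped into $L^2$ with constant $1$) and the far field (carrying $e^{-c\rho^2/t}\|v\|_{L^2}$, which is exactly what produces the $1/\log L$ restriction on $t$). The remainder of your outline --- the Gronwall step, absorption of the $Pv$ and variable-coefficient terms, and the use of the $L^\infty$ bound on the annulus --- is sound once the weight is corrected.
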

~

\begin{lem}
\label{Lmm doubl}Suppose $v\in C_{c}^{\infty}(\mathbb{R}^{n})$ such
that for some $C>1$, \[
2a\int_{\mathbb{R}^{n}}\left|\nabla v\right|^{2}e^{-\left|x\right|^{2}/4a}dx+\frac{n}{2}\int_{\mathbb{R}^{n}}v^{2}e^{-\left|x\right|^{2}/4a}dx\leq C\int_{\mathbb{R}^{n}}v^{2}e^{-\left|x\right|^{2}/4a}dx,\]
for all $0<a\leq1/(12C)$. Then\[
\int_{B(0,2r)}v^{2}dx\leq e^{32C}\int_{B(0,r)}v^{2}dx\]
for all $0\leq r\leq1/2$.
\end{lem}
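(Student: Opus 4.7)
The plan is to study the Gaussian-weighted energy $\Phi(a):=\int_{\mathbb{R}^n} v^2 e^{-|x|^2/(4a)}\,dx$, derive a log-derivative bound $a\Phi'(a)/\Phi(a)\leq 2C$, integrate it to obtain a doubling-type inequality for $\Phi$, and finally transfer that inequality into the desired doubling for the plain ball integrals $f(r):=\int_{B(0,r)} v^2\,dx$.

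For the first step, I would start from the identity $\int\mbox{div}(xv^2 e^{-|x|^2/(4a)})\,dx=0$ (legitimate since $v\in C_c^\infty$); expanding the divergence and using $\Phi'(a)=\int v^2(|x|^2/(4a^2))e^{-|x|^2/(4a)}\,dx$ yields
\[
a\Phi'(a)-\tfrac{n}{2}\Phi(a)=\int vx\cdot\nabla v\,e^{-|x|^2/(4a)}\,dx.
\]
Applying Cauchy-Schwarz on the right, together with $\int v^2|x|^2 e^{-|x|^2/(4a)}\,dx=4a^2\Phi'(a)$ and the hypothesis rewritten as $\int|\nabla v|^2 e^{-|x|^2/(4a)}\,dx\leq\tfrac{2C-n}{4a}\Phi(a)$, gives
\[
\left(a\Phi'(a)-\tfrac{n}{2}\Phi(a)\right)^2\leq a(2C-n)\Phi(a)\Phi'(a).
\]
Writing $y:=a\Phi'(a)/\Phi(a)\geq 0$, this becomes $y^2-2Cy+n^2/4\leq 0$, whose upper root is at most $2C$. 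Hence $(\log\Phi)'(a)\leq 2C/a$ for $a\in(0,1/(12C)]$, and integrating yields
\[
\Phi(a_2)/\Phi(a_1)\leq(a_2/a_1)^{2C}\qquad\text{for }0<a_1\leq a_2\leq 1/(12C).
\]

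The transfer to ball integrals uses the elementary two-sided comparisons $f(2r)\leq e^{r^2/a_2}\Phi(a_2)$ (since $e^{-|x|^2/(4a_2)}\geq e^{-r^2/a_2}$ on $B(0,2r)$) and $\Phi(a_1)\leq f(r)+e^{-r^2/(4a_1)}\int v^2\,dx$ (splitting the weighted integral at $|x|=r$). I would handle this by a dichotomy on the ratio $\int v^2/f(r)$: if it is already $\leq e^{32C}$, the conclusion follows at once from $f(2r)\leq\int v^2$; otherwise, one chooses $a_1\sim r^2/\bigl(4\log(\int v^2/f(r))\bigr)$ so that the tail equals $f(r)$ and hence $\Phi(a_1)\leq 2f(r)$, pairs it with $a_2=Ta_1$ for some $T\geq 1$ (still in $(0,1/(12C)]$), and chains the estimates to obtain
\[
f(2r)\leq 2T^{2C}\bigl(\textstyle\int v^2/f(r)\bigr)^{4/T}f(r).
\]
Optimizing in $T$ (essentially $T\sim\log(\int v^2/f(r))/C$) balances the polynomial factor $T^{2C}$ against the exponential factor to produce the doubling constant $e^{32C}$.

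The main obstacle is the second step: the log-derivative bound is a self-contained one-variable ODE argument, but converting the Gaussian-weighted doubling into a ball doubling requires careful parameter bookkeeping to ensure that $a_1,a_2\leq 1/(12C)$ uniformly over all $r\in[0,1/2]$ and over arbitrary mass distributions of $v$, and that the final exponent collapses to the claimed $32C$. The conceptual content of the lemma lies in the monotonicity formula for $\Phi$; the remaining technicalities are routine but delicate.
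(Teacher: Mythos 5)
This lemma is not proved in the paper at all; it is quoted verbatim from Escauriaza--Fern\'andez--Vessella \cite{MR2198840}, so your argument has to stand on its own. The first half does stand: the divergence identity $a\Phi'(a)-\tfrac{n}{2}\Phi(a)=\int v\,x\cdot\nabla v\,e^{-|x|^{2}/4a}dx$, the Cauchy--Schwarz step, the quadratic inequality $y^{2}-2Cy+n^{2}/4\le 0$, and the resulting monotonicity $\Phi(a_{2})\le (a_{2}/a_{1})^{2C}\Phi(a_{1})$ are all correct, and this is indeed the standard mechanism behind the lemma.

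The gap is in the transfer to balls. Your chain gives $f(2r)\le 2\,T^{2C}Q^{4/T}f(r)$ with $Q=\int v^{2}/f(r)$, and you assert that optimizing in $T$ produces $e^{32C}$. It does not: minimizing $T^{2C}e^{(4\log Q)/T}$ over $T$ gives $T=2\log Q/C$ and the minimum value $e^{2C}\left(2\log Q/C\right)^{2C}$, which tends to infinity as $Q\to\infty$. The obstruction is structural, not a matter of bookkeeping: to make the crude tail $e^{-r^{2}/4a_{1}}\int v^{2}$ comparable to $f(r)$ you are forced to take $a_{1}\lesssim r^{2}/\log Q$, while to keep $e^{r^{2}/a_{2}}$ of size $e^{O(C)}$ you need $a_{2}\gtrsim r^{2}/C$, so $(a_{2}/a_{1})^{2C}\gtrsim(\log Q/C)^{2C}$ no matter how you choose the parameters. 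Your scheme therefore only yields a doubling constant depending on the vanishing order $\log Q$ of $v$ at the origin, not the universal constant claimed. The missing idea is to bound the tail by the weighted quantity itself and absorb it: for $|x|\ge r$ one has $e^{-|x|^{2}/4a}\le e^{-r^{2}/8a}e^{-|x|^{2}/8a}$, hence $\int_{|x|\ge r}v^{2}e^{-|x|^{2}/4a}dx\le e^{-r^{2}/8a}\Phi(2a)\le e^{-r^{2}/8a}2^{2C}\Phi(a)$ by your own monotonicity (valid when $2a\le 1/(12C)$). Choosing $a_{1}=r^{2}/\bigl(8(2C+1)\log 2\bigr)$ makes this tail at most $\tfrac12\Phi(a_{1})$, whence $\Phi(a_{1})\le 2f(r)$ with no reference whatsoever to $\int v^{2}$. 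Now both $a_{1}$ and $a_{2}$ are comparable to $r^{2}/C$ with a bounded ratio (e.g. $a_{2}=3a_{1}$, and one checks $2a_{1},a_{2}\le 1/(12C)$ for $r\le 1/2$, $C>1$), so $f(2r)\le e^{r^{2}/a_{2}}(a_{2}/a_{1})^{2C}\Phi(a_{1})\le 2\cdot 3^{2C}\cdot 2^{(16C+8)/3}f(r)\le e^{32C}f(r)$. This absorption step also disposes of the degenerate case $f(r)=0$, where your dichotomy on $Q$ is undefined.
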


\subsection{Proof of the Carleman inequalities}

In this section we will prove the Carleman inequalities that were
used in the proofs of Theorems 1.1 and 1.2. We will use the following
notations\begin{eqnarray*}
 & \Delta v=\mbox{div}(A\nabla v)\\
 & \left\Vert \nabla v(x,t)\right\Vert =\left\langle A(x,t)\nabla v(x,t),\nabla v(x,t)\right\rangle ^{1/2}.\end{eqnarray*}
We recall the following lemma of \cite{MR1971939} (see also \cite{MR2001174},
\cite{MR2198840}). 

\begin{lem}
\label{Basic lmm}Suppose $\sigma(t):\mathbb{R}_{+}\rightarrow\mathbb{R}_{+}$
is a smooth function, $\alpha$ is a real number, $F$ and $G$ are
differentible functions, $G$ positive. Then the following identity
holds for $v\in C_{c}^{2}(\mathbb{R}^{n}\times[0,T])$\[
2\int_{\mathbb{R}_{+}^{n+1}}\frac{\sigma^{1-\alpha}}{\sigma'}w^{2}Gdxdt+\frac{1}{2}\int_{\mathbb{R}_{+}^{n+1}}\frac{\sigma^{1-\alpha}}{\sigma'}v^{2}MGdxdt-\frac{\alpha}{2}\int_{\mathbb{R}_{+}^{n+1}}\sigma^{-\alpha}v^{2}\left(\frac{\partial_{t}G-\Delta G}{G}-F\right)Gdxdt\]
\[
+\int_{\mathbb{R}_{+}^{n+1}}\frac{\sigma^{1-\alpha}}{\sigma'}\left[\left(\log\frac{\sigma}{\sigma'}\right)'+\frac{\partial_{t}G-\Delta G}{G}-F\right]\left\Vert \nabla v\right\Vert ^{2}Gdxdt+2\int_{\mathbb{R}_{+}^{n+1}}\frac{\sigma^{1-\alpha}}{\sigma'}\left\langle D_{G}\nabla v,\nabla v\right\rangle Gdxdt\]
\[
-\int_{\mathbb{R}_{+}^{n+1}}\frac{\sigma^{1-\alpha}}{\sigma'}v\left\langle A\nabla v,\nabla F\right\rangle Gdxdt=2\int_{\mathbb{R}_{+}^{n+1}}\frac{\sigma^{1-\alpha}}{\sigma'}wPvGdxdt+\int_{\mathbb{R}^{n}\times\{T\}}\frac{\sigma^{1-\alpha}}{\sigma'}\left\Vert \nabla v\right\Vert ^{2}Gdx-\]
\[
-\int_{\mathbb{R}^{n}\times\{0\}}\frac{\sigma^{1-\alpha}}{\sigma'}\left\Vert \nabla v\right\Vert ^{2}Gdx+\frac{1}{2}\int_{\mathbb{R}^{n}\times\{T\}}\frac{\sigma^{1-\alpha}}{\sigma'}v^{2}(F-\frac{\alpha\sigma'}{\sigma})Gdx-\]
\[
-\frac{1}{2}\int_{\mathbb{R}^{n}\times\{0\}}\frac{\sigma^{1-\alpha}}{\sigma'}v^{2}(F-\frac{\alpha\sigma'}{\sigma})Gdx.\]
where\[
w=\partial_{t}v-\left\langle A\nabla\log G,\nabla v\right\rangle +\frac{Fv}{2}-\frac{\alpha\sigma'}{2\sigma}v,\]
 \[
M=\left(\log\frac{\sigma}{\sigma'}\right)'F+\partial_{t}F+F\left(\frac{\partial_{t}G-\Delta G}{G}-F\right)-\left\langle A\nabla\log G,\nabla F\right\rangle ,\]

and \[
D_{G}^{ij}=a^{il}\partial_{kl}(\log G)a^{kj}+\frac{\partial_{l}(\log G)}{2}\left[a^{kj}\partial_{k}a^{il}+a^{ki}\partial_{k}a^{jl}-a^{kl}\partial_{k}a^{ij}\right]+\frac{1}{2}\partial_{t}a^{ij}.\]

\end{lem}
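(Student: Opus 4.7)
The plan is to derive the identity by a direct, if laborious, integration by parts, viewing $w$ as the antisymmetric part of the operator $P$ after conjugation by the weight $H := \frac{\sigma^{1-\alpha}}{\sigma'}G$. Concretely, I would start from the right-hand side quantity $2\int_{\mathbb{R}^{n+1}_+} \frac{\sigma^{1-\alpha}}{\sigma'}\, w\, Pv\, G\, dxdt$ and expand $Pv = \partial_t v + \mathrm{div}(A\nabla v)$ together with the four pieces of $w$, namely $\partial_t v$, $-\langle A\nabla \log G,\nabla v\rangle$, $\tfrac12 Fv$ and $-\tfrac{\alpha\sigma'}{2\sigma}v$. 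This produces a sum of eight bulk integrals, some quadratic in $v$ and some quadratic in $\nabla v$. My goal would be to transform each of these into the terms appearing on the left-hand side modulo boundary contributions at $t=0$ and $t=T$.

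Second, I would handle the purely time-like contributions first. The piece $2\int H(\partial_t v)(\tfrac12 Fv - \tfrac{\alpha\sigma'}{2\sigma}v)G$ is of the form $\int H F v\,\partial_t v\,G$ and gets rewritten by turning $v\,\partial_t v = \tfrac12\partial_t(v^2)$, then integrating by parts in $t$ so that $\partial_t$ falls on $HFG$ (respectively $H\sigma'/\sigma\, G$). This is where the operator $M$ and the factor $\partial_t G/G$ come from, together with the boundary term $\tfrac12\int_{t=T,0} H v^2(F-\alpha\sigma'/\sigma)G$. The symmetric cross-term $2\int H(\partial_t v)\,\mathrm{div}(A\nabla v)G$ is integrated by parts in $x$ to produce $-\int H\partial_t(\|\nabla v\|^2/2)G$ plus errors involving $\partial_t a^{ij}$; another $t$-integration by parts then gives the $\partial_t G$ contribution inside the $\|\nabla v\|^2$ coefficient as well as the boundary traces $\int_{t=T,0} H\|\nabla v\|^2 G$.

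Third, the hard ingredient is the antisymmetric/commutator term $-2\int H\langle A\nabla\log G,\nabla v\rangle\,\mathrm{div}(A\nabla v)G\,dxdt$. Integration by parts in $x$ moves one derivative off $\mathrm{div}(A\nabla v)$; after repeated applications one gets an expression of the form $\int H \langle D_G\nabla v,\nabla v\rangle G$ plus $\int H \cdot F$–type corrections and a term $-\tfrac12\int Hv^2\,\Delta G/G \cdot G$. The coefficient $D_G^{ij}$ is precisely the symmetrization of the commutator $[\langle A\nabla\log G,\nabla\cdot\rangle,\mathrm{div}(A\nabla\cdot)]$ acting as a bilinear form on $\nabla v$; in constant–coefficient settings this reduces to $a^{il}a^{kj}\partial_{kl}\log G$, and the three bracketed terms in $D_G$ are exactly the correction one picks up when $a^{ij}$ depend on $x$ (the $\tfrac12\partial_t a^{ij}$ comes from commuting $\partial_t$ with $\mathrm{div}(A\nabla\cdot)$). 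The remaining scalar term $-\tfrac12\int Hv\,\langle A\nabla v,\nabla F\rangle$ is obtained by pairing the $\tfrac12 Fv$ piece of $w$ against $\mathrm{div}(A\nabla v)$ and doing one integration by parts.

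I expect the principal obstacle to be the bookkeeping: correctly identifying which integrations by parts produce the factor $(\log(\sigma/\sigma'))'$ (this should come from the $t$-derivative of the front coefficient $\sigma^{1-\alpha}/\sigma'$, divided again by $\sigma^{1-\alpha}/\sigma'$) and verifying that all spurious terms cancel. A technical subtlety is that several spatial integrations by parts create terms with $\partial_k a^{ij}$ that at first look orphaned; they must be recombined symmetrically to produce exactly the bracket expression in $D_G^{ij}$. Once this algebra checks out, the statement follows; there is no hidden estimate, only an exact identity. In practice, I would cross-check by specializing to $A=\mathrm{Id}$, $F=0$, $\sigma(t)=t$ to recover the familiar Escauriaza–Fernández–Vessella identity, and then add back the perturbations one by one.
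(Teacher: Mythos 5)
The first thing to note is that the paper does not prove this lemma at all: it is recalled verbatim from the reference [MR1971939] (Escauriaza--Vessella; see also [MR2001174], [MR2198840]), and the paper only uses the special case $\alpha=0$, $\sigma(t)=e^{t}$ to derive Lemma \ref{lem:basic 2}. So you are supplying a derivation the paper delegates to a citation. Your strategy --- split the conjugated operator into its symmetric and antisymmetric parts relative to the measure $G\,dx\,dt$, integrate by parts, and identify $D_G$ as the resulting commutator bilinear form --- is exactly how the identity is established in the cited reference, and your accounting of where each term originates (the $(\log(\sigma/\sigma'))'$ from differentiating the front weight $\sigma^{1-\alpha}/\sigma'$, the $\tfrac12\partial_t a^{ij}$ from commuting $\partial_t$ past $\mathrm{div}(A\nabla\cdot)$, the boundary traces at $t=0,T$ only) is sound. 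The cross-check against the constant-coefficient case is also the right sanity test.

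One organizational point in your plan would derail a literal execution, and you should fix it before grinding through the algebra. You propose to expand $2\int H\,w\,Pv\,G$ into all eight products of the pieces of $w$ with the pieces of $Pv$ and to "transform each of these into the terms appearing on the left-hand side." But that expansion contains the pure square $2\int H(\partial_t v)^2G$, which no integration by parts can convert into anything on the left-hand side; the left-hand side carries $(\partial_tv)^2$ only inside the unexpanded $2\int Hw^2G$. The correct bookkeeping is to write $Pv=w+\mathcal{S}v$ with $\mathcal{S}v=G^{-1}\mathrm{div}(GA\nabla v)-\tfrac{F}{2}v+\tfrac{\alpha\sigma'}{2\sigma}v$ (the $L^2(G\,dx)$-symmetric part), keep $2\int Hw^2G$ intact, and expand only the cross term $2\int Hw\,\mathcal{S}v\,G$; all the remaining left-hand-side terms and the boundary terms then come out of that single cross term. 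With that reorganization your outline goes through; note also a small factor slip at the end: pairing $2\int Hw\,\mathrm{div}(GA\nabla v)$ against the $\tfrac12Fv$ piece of $w$ produces $-\int Hv\langle A\nabla v,\nabla F\rangle G$ with coefficient $-1$ (the overall factor $2$ cancels the $\tfrac12$), not $-\tfrac12$ as written.
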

We will first derive a corollary of this lemma which will be used
to prove all of our Carleman inequalities. Letting $\alpha=0$ and
$\sigma(t)=e^{t}$ in Lemma \ref{Basic lmm}, we obtain the following
identity for $v\in C_{c}^{2}(\mathbb{R}^{n}\times[0,T])$,\begin{eqnarray}
 &  & 2\int_{\mathbb{R}_{+}^{n+1}}w^{2}Gdxdt+\frac{1}{2}\int_{\mathbb{R}_{+}^{n+1}}v^{2}MGdxdt-\int_{\mathbb{R}_{+}^{n+1}}v\left\langle A\nabla v,\nabla F\right\rangle Gdxdt\nonumber \\
 &  & +\int_{\mathbb{R}_{+}^{n+1}}\left\Vert \nabla v\right\Vert ^{2}\left(\frac{\partial_{t}G-\Delta G}{G}-F\right)Gdxdt+2\int_{\mathbb{R}_{+}^{n+1}}\left\langle D_{G}\nabla v,\nabla v\right\rangle Gdxdt\nonumber \\
 & = & 2\int_{\mathbb{R}_{+}^{n+1}}wPvGdxdt+\int_{\mathbb{R}^{n}}\left\Vert \nabla v(x,T)\right\Vert ^{2}Gdx-\int_{\mathbb{R}^{n}}\left\Vert \nabla v(x,0)\right\Vert ^{2}Gdx\label{eq: iden1}\\
 &  & +\frac{1}{2}\int_{\mathbb{R}^{n}}v^{2}(x,T)FGdx-\frac{1}{2}\int_{\mathbb{R}^{n}}v^{2}(x,0)FGdx.\nonumber \end{eqnarray}
where \[
M=\partial_{t}F+F\left(\frac{\partial_{t}G-\Delta G}{G}-F\right)-\left\langle A\nabla F,\nabla\log G\right\rangle .\]
Note that if $\nabla F$ is differentiable, we can integrate by parts
to obtain \[
-\int_{\mathbb{R}_{+}^{n+1}}v\left\langle A\nabla v,\nabla F\right\rangle Gdxdt=\frac{1}{2}\int_{\mathbb{R}_{+}^{n+1}}v^{2}\Delta FGdxdt+\frac{1}{2}\int_{\mathbb{R}_{+}^{n+1}}v^{2}\left\langle A\nabla F,\nabla\log G\right\rangle Gdxdt.\]
Then this term can be combined with the second term of the left hand
side. However, in our applications, $\nabla F$ might not be differentiable,
so we approximate $F$ by some $C^{2}$ function $F_{0}$ and use
the above identity with $F_{0}$ in place of $F$. Then, using Cauchy-Schwarz,
we arrive at the following lemma.

\begin{lem}
\label{lem:basic 2}Suppose $v\in C_{c}^{2}(\mathbb{R}^{n}\times[0,T])$,
then\begin{eqnarray}
 &  & \frac{1}{2}\int_{\mathbb{R}_{+}^{n+1}}v^{2}M_{0}Gdxdt+\int_{\mathbb{R}_{+}^{n+1}}\left[2\left\langle D_{G}\nabla v,\nabla v\right\rangle +\left\Vert \nabla v\right\Vert ^{2}\left(\frac{\partial_{t}G-\Delta G}{G}-F\right)\right]Gdxdt\nonumber \\
 &  & -\int_{\mathbb{R}_{+}^{n+1}}v\left\langle A\nabla v,\nabla(F-F_{0})\right\rangle Gdxdt\leq\int_{\mathbb{R}_{+}^{n+1}}\left|Pv\right|^{2}Gdxdt+\int_{\mathbb{R}^{n}}\left\Vert \nabla v(x,T)\right\Vert ^{2}Gdx\nonumber \\
 &  & -\int_{\mathbb{R}^{n}}\left\Vert \nabla v(x,0)\right\Vert ^{2}Gdx+\frac{1}{2}\int_{\mathbb{R}^{n}}v^{2}(x,T)FGdx-\frac{1}{2}\int_{\mathbb{R}^{n}}v^{2}(x,0)FGdx.\label{eq:inequa 1}\end{eqnarray}
where \[
M_{0}=\partial_{t}F+F\left(\frac{\partial_{t}G-\Delta G}{G}-F\right)+\Delta F_{0}-\left\langle A\nabla(F-F_{0}),\nabla\log G\right\rangle \]
and \[
D_{G}^{ij}=a^{il}\partial_{kl}(\log G)a^{kj}+\frac{\partial_{l}(\log G)}{2}\left[a^{kj}\partial_{k}a^{il}+a^{ki}\partial_{k}a^{jl}-a^{kl}\partial_{k}a^{ij}\right]+\frac{1}{2}\partial_{t}a^{ij}.\]

\end{lem}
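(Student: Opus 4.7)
The plan is to specialize the identity of Lemma~\ref{Basic lmm} to $\alpha=0$ and $\sigma(t)=e^{t}$, which is precisely the identity (\ref{eq: iden1}) recorded above, and then massage the resulting equality into (\ref{eq:inequa 1}) using only two ingredients: a single integration by parts that separates the smooth approximant $F_0$ from the residual $F-F_0$, and Cauchy--Schwarz to absorb the unwanted $w^2$ term.

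Concretely, in (\ref{eq: iden1}) I would split $\nabla F=\nabla F_0+\nabla(F-F_0)$ wherever it appears. The cross term $-\int v\langle A\nabla v,\nabla F\rangle G$ decomposes into a smooth piece $-\int v\langle A\nabla v,\nabla F_0\rangle G$ plus a residual $-\int v\langle A\nabla v,\nabla(F-F_0)\rangle G$ that is kept on the left side. By integration by parts (legitimate since $F_0\in C^2$ and $v$ has compact support), the smooth piece equals
\[
\tfrac12\int v^2\,\Delta F_0\,G\,dx\,dt+\tfrac12\int v^2\langle A\nabla F_0,\nabla\log G\rangle G\,dx\,dt.
\]
The $\langle A\nabla F_0,\nabla\log G\rangle$ contribution produced here cancels exactly against the corresponding piece hidden inside $M$ in (\ref{eq: iden1})---namely the $-\langle A\nabla F_0,\nabla\log G\rangle$ that appears once one splits $\langle A\nabla F,\nabla\log G\rangle$---so that the combined coefficient of $v^2$ collapses to exactly the $M_0$ of the statement.

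The final step is to apply $2\int wPv\,G\le\int|Pv|^2G+\int w^2G$ to the right-hand side of (\ref{eq: iden1}). This trades the awkward $w$-dependence for $|Pv|^2$, and the leftover $\int w^2G$ is absorbed into the $2\int w^2G$ already present on the left; the remaining nonnegative $\int w^2G$ is simply discarded. The boundary terms in $F$ are passed through unchanged, and (\ref{eq:inequa 1}) results.

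I do not anticipate any analytic obstacle: the whole argument is algebraic bookkeeping on top of the identity of Lemma~\ref{Basic lmm}. The only point requiring modest care is tracking the sign and coefficient of each of the three occurrences of $F$ (inside $\partial_t F$, inside $F(\tfrac{\partial_t G-\Delta G}{G}-F)$, and inside $\langle A\nabla F,\nabla\log G\rangle$) after the split, to verify that the spurious $F_0$-pieces combine to exactly $\Delta F_0-\langle A\nabla(F-F_0),\nabla\log G\rangle$ and nothing else. Smoothness of $F$ itself is never invoked; only the approximant $F_0$ needs to be $C^2$, which is why the lemma is phrased in the form given and is useful in the later applications where $F$ arises from a Lipschitz weight.
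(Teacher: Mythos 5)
Your proposal is correct and follows essentially the same route as the paper: specialize Lemma \ref{Basic lmm} with $\alpha=0$, $\sigma(t)=e^{t}$ to get (\ref{eq: iden1}), split $\nabla F=\nabla F_{0}+\nabla(F-F_{0})$, integrate by parts only the $C^{2}$ piece so that the $\langle A\nabla F_{0},\nabla\log G\rangle$ terms cancel and the coefficient of $v^{2}$ becomes $M_{0}$, and finish with Cauchy--Schwarz on $2\int wPv\,G$, discarding the leftover nonnegative $\int w^{2}G$. The bookkeeping you describe is exactly what the paper does.
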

We will now prove our Carleman inequalities using Lemma \ref{lem:basic 2}.

\begin{proof}
[Proof of lemma 2.2]As $\mbox{supp}v\subset\{R\leq\left|x\right|\leq R^{1+\epsilon/8}\}\times[0,T]$,
we will assume that $R\leq\left|x\right|\leq R^{1+\epsilon/8}$ in
all the computation below. 

Since $\nabla^{2}\varphi\geq\mbox{Id}$, $\left|\nabla\log G\right|\leq R^{1+\epsilon/8},$
and $\left|\nabla a^{ij}(x,t)\right|\leq R^{-1-\epsilon}$, it follows
that $D_{G}\geq\frac{\lambda^{2}}{2}\mbox{Id}$ for large $R$. To
make the gradient term (i.e. the second term in (\ref{eq:inequa 1}))
positive, we will choose $F$ satisfying\begin{equation}
\left|\frac{\partial_{t}G-\Delta G}{G}-F\right|\leq\lambda^{4}/2,\label{eq:proof 1 eq 1}\end{equation}
so that \[
2\left\langle D_{G}\nabla v,\nabla v\right\rangle +\left\Vert \nabla v\right\Vert ^{2}\left(\frac{\partial_{t}G-\Delta G}{G}-F\right)\geq\frac{\lambda^{2}}{2}\left|\nabla v\right|^{2}.\]
Let $\varphi(x,t)=c(T-t)\left|x\right|+\left|x\right|^{2}$, then
\begin{eqnarray*}
\frac{\partial_{t}G-\Delta G}{G} & = & \partial_{t}\varphi-\Delta\varphi-a^{ij}\partial_{i}\varphi\partial_{j}\varphi\\
 & = & -c\left|x\right|-x_{j}\partial_{i}a^{ij}(x,t)\left(c(T-t)\left|x\right|^{-1}+2\right)-a^{ij}(x,t)x_{i}x_{j}\left(c(T-t)\left|x\right|^{-1}+2\right)^{2}\\
 &  & -a^{ij}(x,t)\left[\delta_{ij}\left(c(T-t)\left|x\right|^{-1}+2\right)-c(T-t)x_{i}x_{j}\left|x\right|^{-3}\right],\end{eqnarray*}
As the second term is of order $O(R^{-7\epsilon/8})$, if we let \begin{eqnarray*}
F(x,t) & = & -c\left|x\right|+\frac{\lambda^{4}}{3}-a^{ij}(x,t)x_{i}x_{j}\left(c(T-t)\left|x\right|^{-1}+2\right)^{2}\\
 &  & -a^{ij}(x,t)\left[\delta_{ij}\left(c(T-t)\left|x\right|^{-1}+2\right)-c(T-t)x_{i}x_{j}\left|x\right|^{-3}\right].\end{eqnarray*}
then (\ref{eq:proof 1 eq 1}) is satisfied. Moreover, \[
-R^{2+\epsilon/4}\lesssim F\lesssim-R^{2},\,\,\,\,-\frac{\lambda^{4}}{2}\leq\frac{\partial_{t}G-\Delta G}{G}-F\leq-\frac{\lambda^{4}}{4}.\]
We have\begin{eqnarray*}
\partial_{t}F(x,t) & = & -\partial_{t}a^{ij}(x,t)x_{i}x_{j}\left(c(T-t)\left|x\right|^{-1}+2\right)^{2}+2ca^{ij}(x,t)x_{i}x_{j}\left|x\right|^{-1}\left(c(T-t)\left|x\right|^{-1}+2\right)\\
 &  & -\partial_{t}\left\{ a^{ij}(x,t)\left[\delta_{ij}\left(c(T-t)\left|x\right|^{-1}+2\right)-c(T-t)x_{i}x_{j}\left|x\right|^{-3}\right]\right\} .\end{eqnarray*}
The second term on the right hand side is positive by ellipticity
of $\{a^{ij}\}$. Noting that the last terms of $F$ and $\partial_{t}F$
are $O(R^{\epsilon/8})$, we get \begin{eqnarray*}
\partial_{t}F+F\left(\frac{\partial_{t}G-\Delta G}{G}-F\right) & \geq & -\left(\frac{\partial_{t}G-\Delta G}{G}-F\right)a^{ij}(x,t)x_{i}x_{j}\left(c(T-t)\left|x\right|^{-1}+2\right)^{2}\\
 &  & -\partial_{t}a^{ij}(x,t)a^{ij}(x,t)x_{i}x_{j}\left(c(T-t)\left|x\right|^{-1}+2\right)^{2}+O(R^{\epsilon/8})\\
 & \gtrsim & R^{2}.\end{eqnarray*}
(note that $\left|\partial_{t}a^{ij}(x,t)\right|\leq C\leq\lambda^{5}/100$.)

For the approximation $F_{0}$ of $F$, we choose \begin{eqnarray*}
F_{0}(x,t) & = & -c\left|x\right|+\frac{\lambda^{4}}{3}-a^{ij}(X,t)x_{i}x_{j}\left(c(T-t)\left|x\right|^{-1}+2\right)^{2}\\
 &  & -a^{ij}(X,t)\left[\delta_{ij}\left(c(T-t)\left|x\right|^{-1}+2\right)-c(T-t)x_{i}x_{j}\left|x\right|^{-3}\right],\end{eqnarray*}
where $X=(2R,0,\ldots,0)$. 

As \[
\left|a^{ij}(x,t)-a^{ij}(X,t)\right|=O(R^{-7\epsilon/8}),\,\,\,\left|\nabla a^{ij}(x,t)\right|=O(R^{-1-\epsilon})\]
and \begin{eqnarray*}
 &  & x_{i}x_{j}\left(c(T-t)\left|x\right|^{-1}+2\right)^{2}=O(R^{2+\epsilon/4}),\,\,\,\,\,\nabla\left(x_{i}x_{j}\left(c(T-t)\left|x\right|^{-1}+2\right)^{2}\right)=O(R^{1+\epsilon/4})\\
 &  & \delta_{ij}\left(c(T-t)\left|x\right|^{-1}+2\right)-c(T-t)x_{i}x_{j}\left|x\right|^{-3}=O(R^{\epsilon/8}),\,\,\,\,\,\\
 &  & \nabla\left(\delta_{ij}\left(c(T-t)\left|x\right|^{-1}+2\right)-c(T-t)x_{i}x_{j}\left|x\right|^{-3}\right)=O(R^{-1+\epsilon/8}),\end{eqnarray*}
we have\[
\nabla(F-F_{0})=O(R^{1-5\epsilon/8}).\]
Easy computation shows \[
\Delta F_{0}=O(R^{\epsilon/4}).\]
Thus, \[
M_{0}=\partial_{t}F+F\left(\frac{\partial_{t}G-\Delta G}{G}-F\right)+\Delta F_{0}-\left\langle A\nabla(F-F_{0}),\nabla\log G\right\rangle \gtrsim R^{2}.\]
 Finally, we can use Cauchy-Schwarz to control the remaining term
as follows \[
\left|\int_{0}^{T}\int_{\mathbb{R}^{n}}v\left\langle A\nabla v,\nabla(F-F_{0})\right\rangle Gdxdt\right|\leq\frac{M_{0}}{4}\int_{\mathbb{R}^{n}}v^{2}Gdxdt+\frac{\lambda^{2}}{4}\int_{\mathbb{R}^{n}}\left|\nabla v\right|^{2}Gdxdt.\]
This show that the left hand side of (\ref{lem:basic 2}) is greater
than \[
\frac{R^{2}}{4}\int_{\mathbb{R}^{n}}v^{2}Gdxdt+\frac{\lambda^{2}}{4}\int_{\mathbb{R}^{n}}\left|\nabla v\right|^{2}Gdxdt.\]
In our case, $F<0$ so the third and fourth terms in the right hand
side of (\ref{lem:basic 2}) are negative. Thus, the lemma is proved.
\end{proof}
~

\begin{proof}
[Proof of lemma 2.4]As $\nabla^{2}\varphi\geq2E_{2}\mbox{Id}$, the
first term in $D_{G}$ is at least $2\lambda^{2}E_{2}\mbox{Id}$.
The middle three terms of $D_{G}$ are $O(E_{1}R^{1-\frac{1+\epsilon}{\delta}})$,
and the last term is bounded by $C\leq\lambda^{4}$. Thus, $D_{G}\geq\lambda^{2}E_{2}\mbox{Id}$.
To make the gradient term positive, we will chose $F$ satisfying\[
\left|\frac{\partial_{t}G-\Delta G}{G}-F\right|\leq\lambda^{4}E_{2},\]
so that then \[
2\left\langle D_{G}\nabla v,\nabla v\right\rangle +\left\Vert \nabla v\right\Vert ^{2}\left(\frac{\partial_{t}G-\Delta G}{G}-F\right)\geq\lambda^{2}E_{2}\left|\nabla v\right|^{2}.\]
Let $\widetilde{x}=x-R\psi(t)e_{1}$. Then we have

\begin{eqnarray*}
\frac{\partial_{t}G-\Delta G}{G} & = & \partial_{t}\varphi-\Delta\varphi-a^{ij}\partial_{i}\varphi\partial_{j}\varphi\\
 & = & -E_{1}R\left|x\right|-2E_{2}R\psi'(t)\left(x_{1}-R\psi(t)\right)\\
 &  & -\partial_{i}a^{ij}(x,t)\left(E_{1}R(T-t)\frac{x_{j}}{\left|x\right|}+2E_{2}\widetilde{x_{j}}\right)\\
 &  & -a^{ij}(x,t)\left(E_{1}R(T-t)\frac{x_{i}}{\left|x\right|}+2E_{2}\widetilde{x_{i}}\right)\left(E_{1}R(T-t)\frac{x_{j}}{\left|x\right|}+2E_{2}\widetilde{x_{j}}\right)\\
 &  & -a^{ij}(x,t)\left[-E_{1}R(T-t)\frac{x_{i}x_{j}}{\left|x\right|^{3}}+\delta_{ij}\left(E_{1}R(T-t)\left|x\right|^{-1}+2E_{2}\right)\right].\end{eqnarray*}
Note that  in $S_{R,T}$ we have\[
\left|\nabla a^{ij}(x,t)\right|\leq\left\langle x\right\rangle ^{-1-\epsilon}\leq R^{-(1+\epsilon)/\delta},\]
hence\[
\left|\partial_{i}a^{ij}(x,t)\left(E_{1}R(T-t)\frac{x_{j}}{\left|x\right|}+2E_{2}\widetilde{x_{j}}\right)\right|\lesssim E_{1}R^{1-\frac{1+\epsilon}{\delta}}.\]
Thus, we choose\begin{eqnarray*}
F(x,t) & = & -E_{1}R\left|x\right|-2E_{2}R\psi'(t)\left(x_{1}-R\psi(t)\right)+\lambda^{4}E_{2}/2\\
 &  & -a^{ij}(x,t)\left(E_{1}R(T-t)\frac{x_{j}}{\left|x\right|}+2E_{2}\widetilde{x_{j}}\right)\left(E_{1}R(T-t)\frac{x_{i}}{\left|x\right|}+2E_{2}\widetilde{x_{i}}\right)\\
 &  & -a^{ij}(x,t)\left[-E_{1}R(T-t)\frac{x_{i}x_{j}}{\left|x\right|^{3}}+\delta_{ij}\left(E_{1}R(T-t)\left|x\right|^{-1}+2E_{2}\right)\right]\end{eqnarray*}
Also, let

\begin{eqnarray*}
F_{0}(x,t) & = & -E_{1}R\left|x\right|-2E_{2}R\psi'(t)\left(x_{1}-R\psi(t)\right)+\lambda^{4}E_{2}/2\\
 &  & -a^{ij}(X,t)\left(E_{1}R(T-t)\frac{x_{j}}{\left|x\right|}+2E_{2}\widetilde{x_{j}}\right)\left(E_{1}R(T-t)\frac{x_{i}}{\left|x\right|}+2E_{2}\widetilde{x_{i}}\right)\\
 &  & -a^{ij}(X,t)\left[-E_{1}R(T-t)\frac{x_{i}x_{j}}{\left|x\right|^{3}}+\delta_{ij}\left(E_{1}R(T-t)\left|x\right|^{-1}+2E_{2}\right)\right]\end{eqnarray*}
where $X=(2R^{1/\delta},0,\ldots,0)$.

In the support of $v$, $T-t\geq T/8$, and $\left|\tilde{x}\right|\leq5R$,
so by ellipticity of $\left\{ a^{ij}\right\} $, \begin{eqnarray*}
 &  & -a^{ij}\left(E_{1}R(T-t)\frac{x_{j}}{\left|x\right|}+2E_{2}\widetilde{x_{j}}\right)\left(E_{1}R(T-t)\frac{x_{i}}{\left|x\right|}+2E_{2}\widetilde{x_{i}}\right)\\
 &  & \leq-\lambda\left|E_{1}R(T-t)\frac{x}{\left|x\right|}-2E_{2}\tilde{x}\right|^{2}\lesssim-T^{2}E_{1}^{2}R^{2}.\end{eqnarray*}
The other terms in $F$ are bounded by $E_{1}R^{2}$, $E_{2}R^{2}/T$,
$E_{2}$, and $E_{1}TR^{1-\frac{1}{\delta}}$. Hence, for large $R$,
\[
F\lesssim-E_{1}^{2}T^{2}R^{2}\mbox{ and }F\left(\frac{\partial_{t}G-\Delta G}{G}-F\right)\gtrsim E_{1}^{2}E_{2}R^{2}\]
It is easy to check that \begin{eqnarray*}
\partial_{t}F & = & O(E_{1}^{2}R^{2})\\
\Delta F_{0} & = & O(E_{1}^{2}R^{2-\frac{2}{\delta}})\end{eqnarray*}
which is smaller than $F\left(\frac{\partial_{t}G-\Delta G}{G}-F\right)$
provided $E_{2}\gg1$. Using $\left|a^{ij}(x,t)-a^{ij}(X,t)\right|\lesssim R^{1-\frac{1+\epsilon}{\delta}}$
and $\left|\nabla\left(a^{ij}(x,t)-a^{ij}(X,t)\right)\right|=R^{-\frac{1+\epsilon}{\delta}}$
in $S_{R,T}$, we get \[
\left|\nabla(F-F_{0})\right|\lesssim R^{3-\frac{2+\epsilon}{\delta}}E_{1}^{2}\]
hence \[
\left|\left\langle A\nabla(F-F_{0}),\nabla\log G\right\rangle \right|\lesssim R^{4-\frac{2+\epsilon}{\delta}}E_{1}^{3}\ll E_{1}^{2}E_{2}R^{2}\]
for large $R$, as $\delta<1+\frac{\epsilon}{2}$. 

Putting together these estimates, we obtain \[
M_{0}=\partial_{t}F+\Delta F_{0}+F\left(\frac{\partial_{t}G-\Delta G}{G}-F\right)-\left\langle A\nabla(F-F_{0}),\nabla\log G\right\rangle \geq E_{1}^{2}E_{2}R^{2}.\]
Finally, since \[
M_{0}E_{2}\gg R^{6-\frac{2(2+\epsilon)}{\delta}}E_{1}^{4}\gtrsim\left|\nabla(F-F_{0})\right|^{2},\]
we can control the remaining term by Cauchy-Schwarz, \[
\left|\int_{\mathbb{R}_{+}^{n+1}}u\left\langle A\nabla u,\nabla(F-F_{0})\right\rangle Gdxdt\right|\leq\frac{M_{0}}{2}\int u^{2}Gdxdt+\frac{\lambda^{2}E_{2}}{8}\int\left|\nabla u\right|^{2}Gdxdt\]
Thus, the lemma is proved.
\end{proof}
~

\begin{proof}
[Proof of lemma 3.1]As $\nabla^{2}\varphi\geq-\frac{\lambda}{8}\mbox{Id}$
and\[
\left|\nabla\log G\right|\left|\nabla a^{ij}\right|=O(\left|x\right|^{-7\epsilon/8}),\,\,\,\,\left|\partial_{t}a^{ij}\right|\leq C\leq\lambda^{4}/100,\]
it follows that if \[
H:=\frac{\partial_{t}G-\Delta G}{G}-F\geq\frac{1}{\lambda}\]
then the gradient term is positive. We have \begin{eqnarray*}
\frac{\partial_{t}G-\Delta G}{G} & = & \frac{\lambda}{16s^{2}}\sum_{i,j\ne1}\left(2\delta_{ij}-\lambda a^{ij}(x,t)\right)x_{i}x_{j}-\frac{c\alpha S^{\alpha}x_{1}}{s^{\alpha+1}}+b\\
 &  & -a^{11}(x,t)\frac{c^{2}(S^{\alpha}-s^{\alpha})^{2}}{s^{2\alpha}}-\sum_{j\ne1}a^{1j}(x,t)\frac{\lambda x_{j}}{2s}\frac{c(S^{\alpha}-s^{\alpha})}{s^{\alpha}}\\
 &  & -\partial_{i}(a^{ij}\partial_{j}\varphi).\end{eqnarray*}
Since $c\leq x_{1}^{1+\epsilon/8}$, from the decay of $\nabla a^{ij}$
it follows that $\left|\partial_{i}(a^{ij}\partial_{j}\varphi)\right|\lesssim1.$
If we choose\begin{eqnarray*}
F(x,t) & = & \frac{\lambda}{16s^{2}}\sum_{i,j\ne1}\left(2\delta_{ij}-\lambda a^{ij}(x,t)\right)x_{i}x_{j}-\frac{c\alpha S^{\alpha}x_{1}}{s^{\alpha+1}}\\
 &  & -a^{11}(x,t)\frac{c^{2}(S^{\alpha}-s^{\alpha})^{2}}{s^{2\alpha}}-\sum_{j\ne1}a^{1j}(x,t)\frac{\lambda x_{j}}{2s}\frac{c(S^{\alpha}-s^{\alpha})}{s^{\alpha}},\end{eqnarray*}
then for large $b$, \[
2b\geq H\geq b/2\]
implying the positivity of the gradient term.

Consider four terms of $(H+\partial_{t})F$ corresponding to four
terms of $F$. 
\begin{enumerate}
\item $I_{1}:=\left(\frac{1}{2}Hs-1\right)\frac{\lambda}{8s^{3}}\sum_{i,j\ne1}\left(2\delta_{ij}-\lambda a^{ij}(X,t)\right)x_{i}x_{j}-\sum_{i,j\ne1}\partial_{t}a^{ij}x_{i}x_{j}\geq b\left|x'\right|^{2}$
\\
for large $b$.
\item \[
I_{2}:=\left(\frac{\alpha+1}{s}-H\right)\frac{c\alpha S^{\alpha}x_{1}}{s^{\alpha+1}}\geq\frac{c\alpha^{2}S^{\alpha}x_{1}}{4s^{\alpha+1}}\geq\frac{1}{8}c\alpha^{2}x_{1}\]
if $\alpha\geq4b$.
\item \begin{eqnarray*}
I_{3}: & = & \left(\frac{2\alpha}{s}-H\right)\frac{a^{11}(x,t)c^{2}(S^{\alpha}-s^{\alpha})^{2}}{s^{2\alpha}}+\frac{2a^{11}\alpha(S^{\alpha}-s^{\alpha})}{s^{\alpha+1}}-\\
 &  & -\partial_{t}a^{11}(x,t)\frac{c^{2}(S^{\alpha}-s^{\alpha})^{2}}{s^{2\alpha}}\geq\frac{\lambda\alpha c^{2}(S^{\alpha}-s^{\alpha})^{2}}{s^{2\alpha}},\end{eqnarray*}
\\
again if $\alpha\geq4b$.
\item \begin{eqnarray*}
I_{4}: & = & -a^{1j}(x,t)\frac{\lambda cx_{j}}{2}\left(-\frac{(\alpha+1)S^{\alpha}}{s^{\alpha+2}}+\frac{1}{s^{2}}+\frac{H(S^{\alpha}-s^{\alpha})}{s^{\alpha+1}}\right)\\
 &  & -\partial_{t}a^{1j}(x,t)\frac{\lambda cx_{j}}{2}\frac{S^{\alpha}-s^{\alpha}}{s^{\alpha+1}}.\end{eqnarray*}
\\
Since we are assuming $a_{\infty}^{1j}=0$, $\left|a^{1j}(x,t)\right|\lesssim\left\langle x\right\rangle ^{-\epsilon}$,
hence \[
\left|I_{4}\right|\leq2^{\alpha}\alpha c\left|x'\right|\left\langle x\right\rangle ^{-\epsilon}+\left|\partial_{t}a^{1j}(x,t)\right|\frac{\lambda c\left|x'\right|}{2}\frac{S^{\alpha}-s^{\alpha}}{s^{\alpha+1}}\]
Recall that $c\leq R^{1+\epsilon/8}\leq x_{1}^{1+\epsilon/8}$, hence
the first term is bounded by $\frac{1}{4}(I_{1}+I_{2})$. Also, by
dilation, we can assume $\left|\partial_{t}a^{ij}\right|\leq C\ll1$,
so that the second term is bounded by $\frac{1}{4}(I_{1}+I_{3})$.
Thus, \[
\left|I_{4}\right|\leq\frac{I_{1}}{2}+\frac{I_{2}+I_{3}}{4}.\]

\end{enumerate}
From these estimates, we obtain \[
(H+\partial_{t})F\geq\frac{I_{1}+I_{2}}{2}\geq\frac{1}{32}(b\left|x'\right|^{2}+c\alpha^{2}x_{1}).\]
As an approximation of $F$, we choose \begin{eqnarray*}
F_{0}(x,t) & = & \frac{\lambda}{16s^{2}}\sum_{i,j\ne1}\left(2\delta_{ij}-\lambda a^{ij}(X,t)\right)x_{i}x_{j}-\frac{c\alpha S^{\alpha}x_{1}}{s^{\alpha+1}}\\
 &  & -a^{11}(X,t)\frac{c^{2}(S^{\alpha}-s^{\alpha})^{2}}{s^{2\alpha}}-\sum_{j\ne1}a^{1j}(X,t)\frac{\lambda x_{j}}{2s}\frac{c(S^{\alpha}-s^{\alpha})}{s^{\alpha}},\end{eqnarray*}
where $X=(R,0,\ldots,0)$. Simple calculation shows that \[
\left|\Delta F_{0}\right|\lesssim\left\langle x\right\rangle ^{-1-\epsilon}(\left|x'\right|+c)+1\lesssim1\]
and 

\[
\left|\left\langle A\nabla(F-F_{0}),\nabla\log G\right\rangle \right|\lesssim\left\langle x\right\rangle ^{-1-\epsilon}\left(\left|x'\right|+c\right)^{3}+R^{-\epsilon}\left(\left|x'\right|+c\right)^{2}\lesssim R^{-\epsilon/2}(b\left|x'\right|^{2}+c\alpha^{2}x_{1}).\]
(the implicit constants depend on $\lambda$ but not on $R$). 

It follows that for large $R$,

\[
M_{0}=(\partial_{t}+H)F+\Delta F_{0}-\left\langle A\nabla(F-F_{0}),\nabla\log G\right\rangle \gtrsim b\left|x'\right|^{2}+c\alpha^{2}x_{1}.\]
We use Cauchy-Schwarz to control the remaining term \[
\left|\int_{\mathbb{R}_{+}^{n+1}}u\left\langle A\nabla u,\nabla(F-F_{0})\right\rangle Gdxdt\right|\leq\frac{1}{4}\int u^{2}M_{0}Gdxdt+\frac{b}{4}\int\left|\nabla u\right|^{2}Gdxdt.\]
As $\left|F\right|\lesssim\left|x'\right|^{2}+R^{2+\epsilon}$, the
lemma is proved.
\end{proof}

\title{\bibliographystyle{amsplain}
\bibliography{UC}
}

\noun{\footnotesize ~}{\footnotesize \par}

\noun{\footnotesize Department of Mathematics, University of Chicago,
5734 S. University Ave., Chicago, IL 60637, USA}{\footnotesize \par}

\textit{\footnotesize E-mail address}: \texttt{\footnotesize tu@math.uchicago.edu}
\end{document}